\theoremstyle{plain}
\newtheorem{theorem}{Theorem}[section]
\newtheorem{lemma}[theorem]{Lemma}
\newtheorem{proposition}[theorem]{Proposition}
\newtheorem{corollary}[theorem]{Corollary}
\numberwithin{equation}{section}
\theoremstyle{definition}
\newtheorem{definition}[theorem]{Definition}
\newtheorem{example}[theorem]{Example}
\newtheorem{remark}[theorem]{Remark}
\DeclareMathOperator{\HH}{H}
\DeclareMathOperator{\Mor}{Mor}
\DeclareMathOperator{\Mod}{-Mod}
\DeclareMathOperator{\DMod}{-Mod^{\mathrm{dis}}}
\DeclareMathOperator{\module}{-mod}
\DeclareMathOperator{\dmodule}{-mod^{\mathrm{dis}}}
\DeclareMathOperator{\fdmod}{-mod^{\mathrm{fd}}}
\DeclareMathOperator{\Ob}{Ob}
\DeclareMathOperator{\Ext}{Ext}
\DeclareMathOperator{\Hom}{Hom}
\DeclareMathOperator{\Stab}{Stab}
\newcommand{\fHom}{\mathscr{H}om}
\newcommand{\fExt}{\mathscr{E}xt}
\newcommand{\BC}{\boldsymbol{\C}}
\newcommand{\C}{{\mathscr{C}}}
\newcommand{\D}{{\mathscr{D}}}
\newcommand{\N}{{\mathbb{N}}}
\newcommand{\RR}{{\underline{R}}}
\newcommand{\FI}{{\mathrm{FI}}}
\newcommand{\OI}{{\mathrm{OI}}}
\newcommand{\VI}{{\mathrm{VI}}}
\newcommand{\FS}{{\mathrm{FS}}}
\newcommand{\Sets}{{\mathrm{Set}}}
\newcommand{\Aut}{{\mathrm{Aut}}}
\newcommand{\op}{{\mathrm{op}}}
\newcommand{\tor}{{\mathrm{tor}}}
\newcommand{\sat}{{\mathrm{sat}}}
\newcommand{\Sh}{{\mathrm{Sh}}}
\newcommand{\PSh}{{\mathrm{PSh}}}
\title[Sheaves of modules and discrete representations of topological groups]{Sheaves of modules on atomic sites and discrete representations of topological groups}
\author{Zhenxing Di}
\address{School of Mathematical Sciences, Huaqiao University, Quanzhou, Fujian, 362021, China.}
\email{dizhenxing@163.com}
\author{Liping Li}
\address{School of Mathematics and Statistics, Hunan Normal University, Changsha, Hunan 410081, China.}
\email{lipingli@hunnu.edu.cn}
\author{Li Liang}
\address{Department of Mathematics, Lanzhou Jiaotong University, Lanzhou, Gansu 730070, China.}
\email{lliangnju@gmail.com}
\author{Fei Xu}
\address{Department of Mathematics, Shantou University, Shantou, Guangdong 515063, China.}
\email{fxu@stu.edu.cn}
\thanks{Z. Di is supported by the National Natural Science Foundation of China (grant No. 11971388); L. Li is supported by the National Natural Science Foundation of China (grant No. 11771135) and the Hunan Provincial Science and Technology Department (grant No. 2019RS1039); L. Liang is supported by the National Natural Science Foundation of China (grant No. 11761045); F. Xu is supported by the National Natural Science Foundation of China (grant No. 11672145).}
\keywords{Atomic Grothendieck topology, sheaves, sheafification, sheaf cohomology, Serre quotient, Nakayama functor, topological groups, discrete representations.}
\begin{document}

\begin{abstract}
The main goal of this paper is to establish close relations among sheaves of modules on atomic sites, representations of categories, and discrete representations of topological groups. We characterize sheaves of modules on atomic sites as saturated representations, which are precisely representations right perpendicular to torsion representations in the sense of Geigle and Lenzing. Consequently, the category of sheaves is equivalent to the Serre quotient of the category of presheaves by the category of torsion presheaves. We also interpret the sheaf cohomology functors as derived functors of the torsion functor and for some special cases as the local cohomology functors. These results as well as a classical theorem of Artin provides us a new approach to study discrete representations of topological groups. In particular, by importing established facts in representation stability theory, we explicitly classify simple or indecomposable injective discrete representations of some topological groups such as the infinite symmetric group, the infinite general or special linear group over a finite field, and the automorphism group of the linearly ordered set $\mathbb{Q}$. We also show that discrete representations $V$ of these topological groups satisfy a certain stability property described in Theorem \ref{stability of discrete representations}.
\end{abstract}

\maketitle

\section{Introduction}

\subsection{Motivation}
Topos theory studies sheaves of sets over (small) categories equipped with Grothendieck topologies, significantly formalizing and generalizing sheaves on topological spaces. It connects many theories in very different languages and hence provides a common foundation for various mathematical areas such as first order logic, model theory, algebraic geometry, topology, functional analysis, and algebra \cite{Jo}. For the interest of the authors, topos theory also provides a powerful tool for studying representations of groups and categories. An important contribution along this approach is established by Artin, who proved a fundamental result relating discrete $G$-sets of topological groups $G$ and sheaves of sets over particular orbit categories equipped with \textit{atomic topology} (defined in Subsection \ref{atomic topology}); see \cite{Ar}, \cite[II.1.9]{Mi} or \cite[III.9, Theorems 1 and 2]{MM}. In particular, if we take $G$ to be infinite symmetric group over countably many elements, then the associated orbit category is precisely $\FI^{\op}$, the opposite category of the category of finite sets and injections, and the category of sheaves of sets over it with atomic topology is the famous \textit{Schanuel topos} in categorical logic. Thus the result of Artin connects quite a few areas, including topos theory, representation stability theory introduced by Church, Ellenberg and Farb in \cite{CEF}, and continuous actions of topological groups on sets. This strategy has also been used to study representations of finite groups in \cite{WX, XX}. Explicitly, given a finite group $G$, one may construct a few small categories $\mathscr{C}$ and equip them with various Grothendieck topologies. It is found that sheaves of modules over these categories are closely related to representations of $G$, and categories of sheaves correspond to various abelian subcategories of the representation category of $G$. Consequently, one may investigate representation theory of $G$ via considering sheaves of modules over $\mathscr{C}$.

These results motivate us to explore more connections among continuous representations of topological groups, sheaves of modules over their orbit categories, and representations of these categories. This is a two-fold project. On one side, since Artin's theorem asserts that the category of \textit{discrete representations} (Defined in Section \ref{discrete repns section}) of a topological group $G$ is equivalent to the category of sheaves of modules over an orbit category $\C$ equipped with the atomic topology, we hope to obtain similar results for other continuous representations of $G$ and other Grothendieck topologies. On the other side, one hopes to characterize these sheaf categories as various subcategories or quotient categories of the representation category of $\C$.

Let us give a few more details about the second side. Let $\C$ be a skeletally small category and $R$ a commutative ring. A \textit{presheaf} of $R$-modules is a contravariant functor from $\C$ to the category $R \Mod$ of $R$-modules, or equivalently, a covariant functor from $\C^{\op}$ to the category $R \Mod$ (we sometimes call it a \textit{$\C^{\op}$-module}, or a \textit{representation of $\C^{\op}$}). When we impose a Grothendieck topology on $\C$, a presheaf of $R$-modules is a \textit{sheaf} with respect to this topology if it satisfies several equivalent axioms; see for instances \cite[III.4, Proposition 1]{MM}. In other words, sheaves of $R$-modules over $\C$ are representations of $\C^{\op}$ satisfying certain special requirements (which are, of course, dependent on the imposed topology), and the category of sheaves is a full subcategory of the category of $\C^{\op}$-modules. However, these special requirements, though are explicit in a formal sense, are hard to check in practice. Therefore, for experts in other areas (especially representation theory) who hope to apply results in sheaf theory to their research, it is desirable to find an equivalent but more accessible approach; that is, to interpret concepts and results in sheaf theory in the language of representation theory of categories. In particular, there are several important questions to be answered: finding representation theoretic conditions such that a $\C^{\op}$-modules is a sheaf of $R$-modules over $\C$ if and only if it satisfies these conditions; reformulating the \textit{sheafification functor} in terms of natural functors in representation theory; finding equivalences between the category of sheaves and certain special module categories of $\C^{\op}$.

In this paper we begin the above mentioned project for a special Grothendieck topology: the atomic topology. We choose it as our first candidate because of the following two reasons. Firstly, Artin's theorem has provided a completely satisfactory solution for the first side, so we only need to establish relations between sheaves of modules over atomic sites and representations of orbit categories. Secondly, representations of orbit categories of many topological groups, such as the infinite symmetric group and the infinite general or special linear group over a finite field, have been well understood by a series of works in \cite{CEF, CEFN, GLX, LR, LY, Nag1, Nag2, PW, PS, SS, SS2, SS3, Wil}, which can be applied to provide significant results and insights for discrete representations of these topological groups.

The main contributions of this paper can be summarized as follows. We characterize sheaves of modules on atomic sites as \textit{saturated modules} defined by the vanishing of the torsion functor and its first right derived functor (see Definition \ref{saturated modules}). It follows immediately from \cite[Proposition 2.2]{GLen} that the category of sheaves is equivalent to the Serre quotient of the category of presheaves by the category of torsion presheaves. We also interpret the sheafification functor and sheaf cohomology functors in terms of localization functors, section functors, and derived functors of the torsion functor in representation theory. Combining these results and Artin's theorem, we deduce that the category of discrete representations of an arbitrary topological group $G$ is equivalent to the Serre quotient of the category of presheaves over a particular orbit category of $G$ by the category of torsion presheaves. As applications, we consider certain topological groups $G$ (including the infinite symmetric group, the infinite general or special linear group over a finite field, and the automorphism group of the linearly ordered set $\mathbb{Q}$), and explicitly construct their associated orbit categories (which turn out to be well known combinatoric categories $\FI$, $\VI$ and $\OI$ widely studied in representation stability theory). By applying established results in representation stability theory, we describe the structure of categories of discrete representations of $G$, and classify simple and indecomposable injective discrete representations. Although a few results on discrete representations of these topological groups have been described in the literature (in particular, \cite{Nag2, SS2}), our approach via sheaf theory seems more uniform and provides extra theoretic insights. Finally, we describe an interesting stability property for discrete representations of the above topological groups.

Now we begin to describe the work of this paper in details.

\subsection{From sheaf theory to representation theory}

Suppose that $\C$ is a skeletally small category and satisfies the right Ore condition. Then one may impose the atomic topology $J_{at}$ on $\C$ to get a site $\boldsymbol{\C} = (\C, J_{at})$. In this situation, the constant structure presheaf $\underline{R}$ assigning to each object in $\C$ the ring $R$ and to each morphism in $\C$ the identity map $\mathrm{id}_R$ is a sheaf of rings, and sheaves of $\RR$-modules over $\boldsymbol{\C}$  \cite{KS, Stack} are precisely presheaves of $R$-modules (defined as above) such that the underlying presheaves of sets are sheaves of sets.

On the other hand, one can consider a natural torsion theory in $\C^{\op} \Mod$. A $\C^{\op}$-module $V$ is \textit{torsion} if for every object $x$ and every element $v \in V(x)$, there is a morphism $\alpha: x \to y$ in $\C^{\op}$ such that $V(\alpha): V(x) \to V(y)$ sends $v$ to 0. For every $\C^{\op}$-module $V$ there is a natural short exact sequence $0 \to V_T \to V \to V_F \to 0$ such that $V_T$ is the maximal torsion submodule of $V$. Note that the assignment $V \mapsto V_T$ gives rise to a left exact functor $\tau$. We say that $V$ is \textit{torsion free} if $V_T = \tau V = 0$, and $V$ is \textit{saturated} if $\mathrm{R}^i \tau (V) = 0$ for $i=0, 1$, where $\mathrm{R}^i \tau$ is the $i$-th right derived functor of $\tau$. In the language of \cite{GLen}, the full subcategory of saturated modules is the right perpendicular category of the category of torsion modules.

Since the category $\C^{\op} \Mod^{\tor}$ of torsion $\C^{\op}$-modules is a Serre subcategory of $\C^{\op} \Mod$, one may define the Serre quotient $\C^{\op} \Mod / \C^{\op} \Mod^{\tor}$. It turns out that for atomic sites, one may translate notions on sheaves of $R$-modules in terms of the above notions in representation theory. As the first main result, for the atomic topology, we have the following ``translation" from sheaf theory to representation theory:

\begin{center}
\begin{tabular}{c|c}
 Sheaf theory & Representation theory\\
  \hline
presheaves (of $R$-modules) over $\C$ & $\C^{\op}$-modules\\
separated presheaves over $\C$ & torsion free $\C^{\op}$-modules\\
sheaves (of $R$-modules) over $\BC=(\C,J_{at})$ & saturated $\C^{\op}$-modules\\
presheaf category $\PSh(\C,R)$ & $\C^{\op} \Mod$\\
sheaf category $\RR\Mod=\Sh(\BC, R)$ & Serre quotient $\C^{\op} \Mod / \C^{\op} \Mod^{\tor}$\\
sheaf comohomology functors & right derived functors of the torsion functor\\
\end{tabular}
\end{center}

More precisely, we have:

\begin{theorem} \label{first main theorem}
Let $\C$ be a skeletally small category which satisfies the right Ore condition and is equipped with the atomic topology (see Subsection \ref{atomic topology}), and let $R$ be a commutative ring. Then:
\begin{enumerate}
\item A presheaf $V$ of $R$-modules over $\C$ is a sheaf if and only if $V$ is saturated as a $\C^{\op}$-module.

\item The category $\Sh(\BC, R)$ of sheaves of $R$-modules over $\BC$ is equivalent to the Serre quotient category $\C^{\op} \Mod / \C^{\op} \Mod^{\tor}$.

\item For a sheaf $V \in \Sh(\BC, R)$, an arbitrary object $x$ in $\C$, and $i \geqslant 1$, the sheaf cohomology group $\mathrm{R}^i \Gamma_x(V)$ on $x$ is isomorphic to $(\mathrm{R}^{i+1} \tau (V))_x$, the value of the $\C^{\op}$-module $\mathrm{R}^{i+1} \tau (V)$ on $x$.
\end{enumerate}
\end{theorem}

\begin{remark} \normalfont
As we pointed out before, statement (2) is a direct consequence of \cite[Proposition 2.2]{GLen}, but statement (1) is not obvious from that proposition. Moreover, although in the above theorem we only deal with sheaves of modules over the constant structure presheaf $\underline{R}$ of rings sending each object in $\C$ to $R$, the characterization specified in the first statement of this theorem also applies to presheaves of modules $V$ over arbitrary structure presheaves $\mathcal{O}$. Indeed, $V$ is a sheaf of $\mathcal{O}$-modules if and only if it is a presheaf of $\mathcal{O}$-modules and also a sheaf of abelian groups (see Subsection \ref{sheaves of modules}). However, a sheaf of abelian groups is nothing but a sheaf of modules over the constant structure presheaf $\underline{\mathbb{Z}}$, and hence degenerates to the case described in the first statement of this theorem.
\end{remark}

\subsection{A more transparent description via Nakayama functor}

Objects and the structure of the Serre quotient category $\C^{\op} \Mod /\C^{\op} \Mod^{\tor}$ are still mysterious for the purpose of application. One thus may ask for more explicit descriptions under certain special circumstances. We consider two types of categories equipped with suitable combinatorial structure. In particular, in the situation that $R$ is a field of characteristic 0, and that $\C^{\op}$ has the \textit{locally Noetherian} property (that is, the category $\C^{\op} \module$ of finitely generated $\C^{\op}$-modules over $R$ is abelian), one automatically obtains that the category $\mathrm{sh}(\BC, R)$ of ``finitely generated" objects in $\Sh(\BC, R)$ is equivalent to the Serre quotient $\C^{\op} \module / \C^{\op} \module^{\tor}$ formed by restricting to the categories of finitely generated objects. This quotient category already contains useful information which is of great interest to us; for example, all simple sheaves are contained in it. We then apply the strategy and the \textit{Nakayama functor} introduced in \cite{GLX} (see also Subsection \ref{type ii cats}) to establish the following result for quite a few infinite combinatorial categories $\C$ (including the category $\FI$, the category $\VI$ of finite dimensional vector spaces over a finite field and linear injections, the category $\OI$ of finite linearly ordered sets and order-preserving injections, etc):

\begin{theorem} \label{second main theorem}
Let $R$ be a field of characteristic 0, and let $\C^{\op}$ be a type II combinatorial category (see Subsection \ref{type ii cats}). Then the Nakayama functor (see Subsection \ref{type ii cats}) induces the following equivalence:
\[
\mathrm{sh} (\BC, R) \simeq \C^{\op} \module / \C^{\op} \module^{\tor} \simeq \C^{\op} \fdmod
\]
where $\C^{\op} \fdmod$ is the category of finite dimensional $\C^{\op}$-modules.
\end{theorem}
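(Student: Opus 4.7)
The plan is to prove the two equivalences in turn. The first equivalence $\mathrm{sh}(\BC, R) \cong \C^{\op}\module / \C^{\op}\module^{\tor}$ is essentially automatic, obtained by restricting the equivalence $\Sh(\BC, R) \cong \C^{\op}\Mod / \C^{\op}\Mod^{\tor}$ from the previous theorem to the level of finitely generated objects. The locally Noetherian hypothesis guarantees that $\C^{\op}\module$ is an abelian subcategory of $\C^{\op}\Mod$, that $\C^{\op}\module^{\tor} = \C^{\op}\module \cap \C^{\op}\Mod^{\tor}$ is a Serre subcategory of $\C^{\op}\module$, and that both the localization and section functors preserve finite generation. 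What remains is the routine verification that $\mathrm{sh}(\BC, R)$ is precisely the essential image of $\C^{\op}\module$ under the equivalence.

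For the second equivalence $\C^{\op}\module / \C^{\op}\module^{\tor} \cong \C^{\op}\fdmod$, the plan is to follow \cite{GLX} and exploit the Nakayama functor $\nu: \C^{\op}\module \to \C^{\op}\fdmod$ adapted to type II combinatorial categories (concretely, a construction built from coinvariants under the automorphism groups $\Aut_{\C^{\op}}(x)$, or an equivalent $\Hom$-and-dualize formula). The main preliminary steps are: (a) verify that $\nu$ kills torsion modules, by unpacking the torsion definition and exploiting the combinatorial structure of type II categories, which forces each element to be absorbed into higher automorphism coinvariants; (b) conclude that $\nu$ descends to a functor $\bar\nu: \C^{\op}\module / \C^{\op}\module^{\tor} \to \C^{\op}\fdmod$; (c) establish essential surjectivity of $\bar\nu$ by lifting finite dimensional modules via an induction-type left adjoint to $\nu$.

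The main obstacle will be proving full faithfulness of $\bar\nu$. The characteristic $0$ hypothesis is decisive here: it makes each group algebra $R[\Aut_{\C^{\op}}(x)]$ semisimple, so finite dimensional $\C^{\op}$-modules decompose into isotypic components, and the type II structure provides filtrations of finitely generated modules whose subquotients are induced from such components. Using the standard description of morphisms in the Serre quotient as colimits over subobjects with torsion kernels and cokernels, one can reduce each $\Hom$ computation to a calculation in the semisimple, finite dimensional setting that matches $\Hom$ in $\C^{\op}\fdmod$. A cleaner reformulation, likely the one carried out in \cite{GLX}, is to show directly that the restriction of $\nu$ to $\C^{\op}\module$ is an exact functor with kernel exactly $\C^{\op}\module^{\tor}$; coupled with a suitable splitting adjoint (the induction functor), the universal property of the Serre quotient then upgrades this to the asserted equivalence.
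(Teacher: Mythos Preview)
Your ``cleaner reformulation'' in the final paragraph is exactly the paper's route, and the paper does not attempt the more hands-on Hom calculation you sketch before it. The argument in \cite{GLX}, which the paper simply cites, runs as follows: the Nakayama functor is defined by the Hom-and-dualize formula $\nu(V)=D\bigl(\Hom_{\C^{\op}\module}(V,\bigoplus_x P(x))\bigr)$ (not via coinvariants); one checks that $(\nu,\nu^{-1})$ is an adjoint pair with $\nu\circ\nu^{-1}\cong\mathrm{id}$, that $\nu$ is exact, and that $\ker(\nu)=\C^{\op}\module^{\tor}$, after which Gabriel's \cite[Proposition III.2.5]{Gab} delivers the equivalence. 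So your overall plan is sound, but a few details are off.

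First, $\nu^{-1}$ is the \emph{right} adjoint of $\nu$, not a left adjoint; essential surjectivity comes from $\nu\circ\nu^{-1}\cong\mathrm{id}$. Second, exactness of $\nu$ is not a direct consequence of characteristic-$0$ semisimplicity of the group algebras $R[\Aut(x)]$; it is deduced from the separate hypothesis (LS) that every projective $\C^{\op}$-module is injective, which makes $\Hom(-,\bigoplus_x P(x))$ exact. Characteristic $0$ is what one uses, example by example, to verify (LS) for $\FI$, $\VI$, etc., but it is (LS) that enters the abstract proof. Third, the identification $\ker(\nu)=\C^{\op}\module^{\tor}$ is not a purely combinatorial unwinding: the inclusion $\supseteq$ uses that each $P(x)$ is torsion free (morphisms are monomorphisms), while the inclusion $\subseteq$ is precisely the content of the hypothesis (NV). Your direct attack on full faithfulness via filtrations and colimit Hom computations in the Serre quotient would presumably work but is considerably more laborious than the exact-functor-with-adjoint argument the paper invokes.
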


We remind the reader that the dimension of a $\C^{\op}$-module $V$ is the (possibly infinite) sum of the dimensions $\dim_R V_x$, where $V_x$ is the value of $V$ on $x$ and $x$ ranges over all objects in $\C$. Therefore, $V$ is of finite dimensional if and only if each $V_x$ is of finite dimensional and all but finitely many $V_x$ are 0. Consequently, we know that objects in $\mathrm{sh} (\BC, R)$ have finite length and finite injective dimension. Furthermore, since it is much easier to classify simple objects in $\C^{\op} \fdmod$, via this equivalence, one can explicitly construct all simple sheaves in $\mathrm{sh} (\BC, R)$.

\subsection{Discrete representations of topological groups}

Let $G$ be a topological group and let $V$ be an $RG$-module. We say that $V$ is a \textit{discrete representation} of $G$ if $V$ is equipped with the discrete topology and the action of $G$ on $V$ is continuous. By the canonical result of Artin (\cite[III.9, Theorems 1 and 2]{MM}), one can construct an orbit category $\mathbf{S}_{\mathcal{U}}G$ with respect to a cofinal system $\mathcal{U}$ of open subgroups of $G$, and the category $RG \DMod$ of discrete representations of $G$ is equivalent to the category $\Sh(\mathbf{S}_{\mathcal{U}}G, {R})$; for details, see Subsection \ref{Artin's theorem}. This result provides a novel approach to study representations of topological groups. In particular, combining Theorem \ref{first main theorem} and Artin's theorem (Theorem \ref{mm}), we obtain the following bridge connecting discrete representations of topological groups, sheaves of modules over atomic sites, and representations of categories:

\begin{corollary} \label{main corollary}
Let $G$ be an arbitrary topological group, let $R$ be a commutative ring, and let $\C = \mathbf{S}_{\mathcal{U}}G$  be the orbit category associated to a cofinal system $\mathcal{U}$ of open subgroups and equipped with the atomic topology. Then one has the following equivalences:
\[
RG \DMod \simeq \Sh(\BC, R) \simeq \C^{\op} \Mod / \C^{\op} \Mod^{\tor}.
\]
\end{corollary}

\begin{remark} \normalfont
Objects in the Serre quotient $\C^{\op} \Mod / \C^{\op} \Mod^{\tor}$ are called \textit{generic representations} by some people, and they also realized the equivalence between the first category and the third category in the above corollary for some special examples; see for instance \cite[Subsection 1.3]{Nag2}. It becomes clear by Artin's theorem and Theorem \ref{first main theorem} that this equivalence actually holds for \textbf{all} topological groups.
\end{remark}

Now we describe a natural construction, which unifies quite a few topological groups whose orbit categories with respect to a suitable choice of cofinal systems of open subgroups are well known combinatorial categories widely studied in representation stability theory. Let $X$ be an infinitely countable set and impose the discrete topology on it. Then the monoid $M(X)$ consisting of all maps from $X$ to itself can be equipped with the product topology (the usual pointwise convergence topology). Let $G$ be a subgroup of $M(X)$ consisting of invertible maps respecting some special rule (for instance, preserving the linear structure or the linearly ordered structure on $X$). Then $G$ inherits the topology on $M(X)$ as a subspace, and hence becomes a topological group. The following table includes a few interesting examples of $G$ where $X$ is $\mathbb{N}$, $\mathbb{F}_q^{\mathbb{N}}$, $\mathbb{Q}$, or $B_{\infty}$ (the free Boolean algebra on a countable infinity of generators) respectively:
\begin{center}
\begin{tabular}{c|ccc}
  $G$ & $\C = \mathbf{S}_{\mathcal{U}}G$ & objects & morphisms\\
  \hline
  $\mathrm{Aut}(\mathbb{N})$ & $\FI^{\op}$ & finite sets & injections \\
  $\varinjlim_n S_n$ & $\FI^{\op}$ & finite sets & injections \\
  $\mathrm{GL}(\mathbb{F}_q^{\mathbb{N}})$ & $\VI^{\op}$ & finite diml. spaces over $\mathbb{F}_q$ & linear injections \\
  $\varinjlim_n \mathrm{GL}_n(\mathbb{F}_q)$ & $\VI^{\op}$ & finite diml. spaces over $\mathbb{F}_q$ & linear injections \\
  $\varinjlim_n \mathrm{SL}_n(\mathbb{F}_q)$ & $\VI^{\op}$ & finite diml. spaces over $\mathbb{F}_q$ & linear injections \\
  $\Aut(\mathbb{Q}, \leqslant)$ & $\OI^{\op}$ & finite linearly ordered sets & order-preserving injections \\
  $\Aut(B_{\infty})$ & $\mathrm{FS}$ & finite sets & surjections.
\end{tabular}
\end{center}

Sheaf theory over categories in the above table has been widely applied in categorical logic theory; see for instance \cite[Examples D3.4.1, D3.4.11, and D3.4.12]{Jo}. More recently, their representations were also extensively investigated in representation stability theory. People have obtained a deep and comprehensive understanding of representation theoretic and homological properties via a series of works; see for instances \cite{CE, CEF, CEFN, DPV, DTV, GL2, GLX, GS, LR, LY, Nag, Nag1, Nag2, PS, SS, SS2}. The equivalence $RG \DMod \cong \C^{\op} \Mod / \C^{\op} \Mod^{\tor}$ by Corollary \ref{main corollary} thus allows us to study discrete representations of these topological groups via applying methods and results in representation stability theory. In particular, when $R$ is a field of characteristic 0 (or any field for $\OI^{\op}$), it has been proved in \cite{GL1, GL2, GS, SS} that the categories $\FI$, $\VI$ and $\OI$ satisfy the assumption in Theorem \ref{second main theorem}, so one can use it to classify simple sheaves and simple discrete representations of the corresponded topological groups.

In a summary, we obtain the following classification of simple discrete representations or simple sheaves.

\begin{theorem} \label{classfication}
Let $R$ be a field of characteristic 0 and $p$ a prime number. Then:
\begin{enumerate}
\item Simple discrete representations of $\mathrm{Aut}(\mathbb{N})$ or $\varinjlim_n S_n$ are parameterized by the set $\sqcup_{n \in \mathbb{N}} \mathcal{P}_n$, the set of partitions of $[n]$.
\item Simple discrete representations of $\mathrm{GL}(\mathbb{F}_q^{\mathbb{N}})$, $\varinjlim_n \mathrm{GL}_n(\mathbb{F}_q)$, or $\varinjlim_n \mathrm{SL}_n(\mathbb{F}_q)$ are parameterized by the set $\sqcup_{n \in \mathbb{N}} \mathrm{Irr} (\mathrm{GL}_n (\mathbb{F}^q))$, the set of isomorphism classes of irreducible representations of $\mathrm{GL}_n(\mathbb{F}_q)$.
\item Simple discrete representations of $\mathrm{Aut}(\mathbb{Q}, \leqslant)$ are parameterized by $\mathbb{N}$.
\item Let $\C = \mathcal{Z}(p^n)$ (resp., $\C = \mathcal{Z}(p^{\infty})$) be the category of finitely generated $\mathbb{Z}/p^n \mathbb{Z}$-modules (resp., finite abelian $p$-groups) and surjective module homomorphisms (resp., conjugacy classes of surjective group homomorphisms). Then simple objects in $\Sh(\BC, R)$ are parameterized by the set $\sqcup_{n \in \mathbb{N}} \mathrm{Irr} (\Aut(H))$, where $H$ ranges over all finite abelian groups of exponent dividing $p^n$ (resp., all finite abelian $p$-groups).
\end{enumerate}
\end{theorem}

\begin{remark} \normalfont
Sam and Snowden gave in \cite[Proposition 6.1.5]{SS2} a parametrization of irreducible \textit{algebraic representations} of $\varinjlim_n S_n$ over the complex field. In \cite[Theorem 1.10]{Nag2}, Nagpal proved that their parameterization actually holds for arbitrary fields. Moreover, he established in \cite[Theorem 1.12]{Nag2} a parametrization of irreducible \textit{admissible representations} of $\varinjlim_n \mathrm{GL}_n(\mathbb{F}_q)$ for any field in which $q$ is invertible. By carefully checking definitions, it is not hard to see that algebraic representations in \cite{SS2} or admissible representations in \cite{Nag2} are precisely discrete representations in this paper. Therefore, they have established the parameterizations for $\varinjlim_n S_n$ and $\varinjlim_n \mathrm{GL}_n(\mathbb{F}_q)$, while Nagpal's results are even more general. Other parameterizations given in this theorem, as far as the authors know, do not appear in the literature. Furthermore, for $\mathrm{Aut}(\mathbb{Q}, \leqslant)$, the above parametrization holds for any field; for $\mathcal{Z}(p^n)$ and $\mathcal{Z}(p^{\infty})$, the above parameterizations hold for any field of characteristic distinct from $p$.

Algebraic representations of the infinite general linear group were also considered in \cite{SS2}. Their work and the work described in this paper look quite similar, but are different. Actually, what they considered are algebraic complex representations of $\varinjlim_n \mathrm{GL}_n (\mathbb{C})$, while in this paper we consider discrete representations (over any field of characteristic 0) of $\varinjlim_n \mathrm{GL}_n (\mathbb{F}_q)$.

Representation theory of several examples in this theorem were studied by Harman and Snowden in \cite{HS}, where they propose a novel theory of integration on oligomorphic groups, and investigate the pre-Tannakian structure of rigid tensor categories of permutation modules in details.
\end{remark}

\begin{remark}
Isomorphism classes of irreducible representations of $\mathrm{GL}_n(\mathbb{F}_q)$ have been classified by Zelevinsky in \cite[Section 9]{Ze} when $R$ is the complex field. The categories $\mathcal{Z}(p^n)$ and $\mathcal{Z}(p^{\infty})$ were introduced in a recent paper \cite{Pol} by Pol and Strickland. Automorphism groups $\Aut(H)$ of finite abelian $p$-groups $H$ have been described in \cite{HR}. However, it is still unknown to the authors whether isomorphism classes of irreducible representations of $\Aut(H)$ have been classified for fields of characteristic 0.
\end{remark}

Stability patterns of representations, firstly observed by Church, Ellenberg, Farb and Nagpal for $\FI$ in \cite{CEF, CEFN}, are of significant interest in representation theory of categories. Because of the tight relation between sheaves of modules over atomic sites and discrete representations of topological groups, it is not surprising that this phenomena also occurs for the latter ones.

\begin{theorem} \label{stability of discrete representations}
Let $G$ be one of the following topological groups:
\[
\Aut(\N), \, \varinjlim_n S_n, \, \mathrm{GL}(\mathbb{F}_q^{\N}), \, \varinjlim_n \mathrm{GL}_n(\mathbb{F}_q), \, \varinjlim_n \mathrm{SL}_n(\mathbb{F}_q), \, \Aut(\mathbb{Q}, \leqslant), \, \Aut(B_{\infty}),
\]
and let $V$ be a discrete representation of $G$ over a commutative ring $R$. If the presentation degree $N_V$ of $V$ is finite (see \ref{stability result} for its definition), then
\[
V = \bigcup_{\substack{T \subset X \\ |T| \leqslant N_V }} V^{U_T}.
\]
where $T$ is a subset of $X = \N$ for the first two groups, a subset of $X = \mathbb{F}_q^{\N}$ for the three linear groups, a subset of $X = \mathbb{Q}$ for $G = \Aut(\mathbb{Q}, \leqslant)$, a subalgebra of $X = B_{\infty}$ for the last group,
\[
U_T = \Stab_G(T) = \{ g \in G \mid g \cdot x = x \text{ for all } x \in T \}
\]
and
\[
V^{U_T} = \{ v \in V \mid g \cdot v = v \text{ for all } g \in U_T \}.
\]
Furthermore, $N_V$ is the minimal number such that the above identity holds.
\end{theorem}

\begin{remark}
Since $V$ is a discrete representation of $G$, for every element $v \in V$, $\Stab_G(v) \leqslant G$ is an open subgroup. Note that $G$ has a cofinal system $\mathcal{U}$ consisting of open subgroups $\Stab_G(T)$ with $T$ a finite subset of $X$, so we can find a finite subset $T_v \subset X$ such that $\Stab_G(T_v) \leqslant \Stab_G(v)$. In particular, $U_{T_v} = \Stab_G(T_v)$ fixes $v$, so one has
\[
V = \bigcup_{T_v} V^{U_{T_v}}.
\]
But in general one cannot expect a smallest common finite upper bound (called the \textit{stable range} of $V$) for the cardinalities $|T_v|$. However, the theorem asserts that for a discrete representation presented in finite degree, the stable range exists, and furthermore coincides with the presentation degree.
\end{remark}

\subsection{Organization}

This paper is organized as follows. In Section 2 we give some background and preliminary knowledge on topos theory for the convenience of the reader. Sheaves of modules over categories equipped with the atomic topology are studied in Section 3, where we setup the translation machinery between sheaf theory and representation theory of categories, and prove all results listed in Theorem 1.1. In Section 4 we consider three types of categories with suitable combinatorial structure, describe equivalences between $\mathrm{Sh} (\BC, R)$ and some other representation categories which are more transparent compared to the Serre quotient category, and relate sheaf cohomology to certain local cohomology. In particular, we introduce the Nakayama functor, and prove Theorem \ref{second main theorem}. Discrete representations of topological groups are studied in Section 5, where we describe the classical result of Artin and consider some examples as an illustration of our approach, complete the classifications of irreducible sheaves or discrete representations listed in Theorem \ref{classfication}, and establish Theorem \ref{stability of discrete representations}.

\subsection{Notations}

Throughout this paper we stick to the following conventions unless otherwise stated: all functors are covariant functors, composition of maps or functors is from right to left (for example, the composite of a map $f: x \to y$ and a map $g: y \to z$ is written as $gf$), and actions of groups (resp., rings) on sets (resp., modules) are on the left side.

Some notations used throughout this paper are listed below:
\begin{itemize}
\item $\C$: a skeletally small category;
\item $\C^{\op}$: the opposite category of $\C$;
\item $J$: a Grothendieck topology on $\C$;
\item $J_{at}$: the atomic topology on $\C$, which satisfies the right Ore condition;
\item $\boldsymbol{\C} = (\C, J)$: a site (here $J$ is almost always $J_{at}$);
\item $\PSh(\C)$: the category of presheaves of sets over $\C$;
\item $\Sh(\boldsymbol{\C})$: the category of sheaves of sets over the site $\boldsymbol{\C}$;
\item $R$: a commutative ring;
\item $R\C$: the category algebra;
\item $\RR$: the constant presheaf of rings over $\C$ sending each object in $\C$ to $R$;
\item $P(x)$: the representable functor $R\C(x, -)$;
\item $\PSh(\C, R) = \C^{\op} \Mod$: the category of presheaves of $R$-modules over $\C$, or equivalently, the category of $\C^{\op}$-modules;
\item $\Sh(\BC, R)=\RR\Mod$: the category of sheaves of $R$-modules over the ringed site $(\BC, \RR)$ (equipped with the atomic topology), which are often called the $\RR$-modules;
\item $\mathrm{psh}(\C, R) = \C^{\op} \module$: the category of finitely generated $\C^{\op}$-modules;
\item $\mathrm{sh}(\BC, R)$: the category of finitely generated $\C^{\op}$-modules which are also sheaves of $R$-modules over the ringed site $(\BC, \RR)$ (equipped with the atomic topology);
\item $R \Mod$: the category of $R$-modules;
\item $\C \Mod^{\sat}$: the category of saturated $\C$-modules;
\item $\C \Mod^{\tor}$: the category of torsion $\C$-modules;
\item $\C \Mod^{\mathrm{tf}}$: the category of torsion free $\C$-modules;
\item $\C \module$: the category of finitely generated $\C$-modules;
\item $\C \module^{\sat}$: the category of finitely generated saturated $\C$-modules;
\item $\C \module^{\tor}$: the category of finitely generated torsion $\C$-modules;
\item $\C \fdmod$: the category of finite dimensional $\C$-modules;
\item $RG \DMod$: the category of discrete representations of a topological group $G$;
\item $\mathcal{P}_n$: the set of partitions of $[n]$;
\item $\mathcal{P}$: the disjoint union of $\mathcal{P}_n$ for all $n \in \N$.
\end{itemize}

\section{Preliminaries on topos theory}

For the convenience of the reader, in this section we introduce some basic concepts and results on topos theory. Good references for this area include \cite{Jo}, \cite{MM} and \cite{KW}.

\subsection{Grothendieck topology and sites}

Throughout this paper we let $\C$ be a \textit{skeletally small} category; that is, the isomorphism classes of objects form a set, and for every pair of objects $x$ and $y$, the class $\C(x, y)$ of morphisms from $x$ to $y$ is also a set. For an object $x$, a \textit{sieve} on $x$ is a subfunctor of the representable functor $\C(-, x)$; or equivalently, a \textit{right ideal} $S$ of $\C$ consisting of morphisms ending at $x$ in the following sense: if $f: y \to x$ is a member in $S$, and $g: z \to y$ is a morphism, then $fg = f \circ g \in S$ as well.

We recall the following definition of Grothendieck topology \cite[III.2, Definition 1]{MM}.

\begin{definition}
A \textit{Grothendieck topology} on $\C$ is a rule $J$ assigning to each object $x$ a family $J(x)$ of sieves on $x$ such that the following axioms are satisfied:
\begin{enumerate}
\item for each object $x$, $\C(-, x) \in J(x)$;
\item if $S \in J(x)$, then for any morphism $f: y \to x$, $f^{\ast} (S) = \{ g: \bullet \to y \mid fg \in S \}$ is contained in $J(y)$;
\item if $S \in J(x)$ and $R$ is any sieve on $x$ such that $f^{\ast}(R) \in J(y)$ for any $f: y \to x$ contained in $S$, then $R \in J(y)$.
\end{enumerate}
The pair $\boldsymbol{\C}=(\C, J)$ is called a \textit{site}, and members in $J(x)$ are called \textit{covering sieves} of $x$.
\end{definition}

{We shall point out that in the literature there are different versions of definitions of Grothendieck topologies. In this paper we always follow the definitions and conventions of \cite{MM}, and do not require categories to have fibre products. We also note that the assumption that $\C$ is skeletally small is essential since the first condition in the above definition requires $\C(y, x)$ to be a set.

There is a special type of Grothendieck topology called the \textit{trivial topology}: for every object $x$ one has $J(x) = \{ \C(-, x)\}$. This is the coarsest topology we may impose on $\C$.

\subsection{Sheaves of sets}

Let $\Sets$ be the category of sets and maps. A \textit{presheaf} of sets over $\C$ is a contravariant functor $F: \C \to \Sets$, or equivalently, a covariant functor from $\C^{\op}$ to $\Sets$. Morphisms between two presheaves of sets are natural transformations. Note that the definition of presheaves of sets is independent of Grothendieck topologies on $\C$. We denote the category of presheaves of sets by $\PSh(\C)$.

Now we fix a Grothendieck topology $J$ on $\C$ and let $\boldsymbol{\C} = (\C, J)$ be the corresponding site. The following definitions are taken from \cite[III.4]{MM}.

\begin{definition}
Let $F$ be a presheaf of sets and $S$ a covering sieve of a certain object $x$ in $\C$. A \textit{matching family} for $S$ of elements of $F$ is a rule assigning to each morphism $f: y \to x$ in $S$ an element $v \in F(y)$ such that $v_f \cdot g = v_{fg}$ for any morphism $g: \bullet \to y$, where the action of $g$ on $v_f$ is determined by the contravariant functor $F: \C \to \Sets$. An amalgamation of this matching family is an element $v_x \in F(x)$ such that $v_x \cdot f = v_f$ for all $f \in S$ \footnote{To respect our previous convention specified in Introduction, for actions of morphisms on sets via contravariant functors, we write the action on the right side.}.
\end{definition}

\begin{definition}
A presheaf $F \in \PSh(\C)$ is \textit{separated} if every matching family for every object $x$ and every covering sieve $S \in J(x)$ has at most one amalgamation. It is a \textit{sheaf} on the site $\boldsymbol{\C} = (\C, J)$ if every matching family has exactly one amalgamation.
\end{definition}

In particular, when $J$ is the trivial topology, every presheaf is a sheaf. In general every sheaf is a separated presheaf, and morphisms between two sheaves are morphisms between them as presheaves. Consequently, the category of sheaves over $\BC$ is a full subcategory of $\PSh(\C)$, and we denote it by $\Sh(\BC)$ (or simply $\Sh(\C)$ when the Grothendieck topology $J$ is clear from the context). There is a natural inclusion functor $\iota: \Sh(\BC) \to \PSh(\C)$, and it has a left adjoint functor $\sharp: \PSh(\C) \to \Sh(\BC)$, called the \textit{sheafification} functor. It is well-known that the sheafification is a special type of localization; see \cite[Proposition 1.3]{GZ} or \cite{KS}. It means that $\Sh(\BC)$ is a localization of $\PSh(\C)$ by the multiplicative system of local isomorphisms \cite{KS} (or morphismes bicouvrants \cite{AGV}).

\subsection{Atomic topology} \label{atomic topology}

In this paper we mostly consider a specific Grothendieck topology, called the atomic topology. To introduce its description, we need the following definition.

\begin{definition}
We say that $\C$ satisfies the \textit{left Ore condition} if for every pair of morphisms $f: x \to y$ and $f': x \to y'$, there exist an object $z$ and a pair of morphisms $g: y \to z$ and $g': y' \to z$ such that the following diagram commutes:
\[
\xymatrix{
x \ar[r]^-f \ar[d]_{f'} & y \ar@{-->}[d]^-g \\
y' \ar@{-->}[r]_-{g'} & z.
}
\]
The \textit{right Ore condition} is defined dually.
\end{definition}

For every object $x$ in $\C$, we let $J_{at}(x)$ be the collection of all nonempty sieves of $\C(-, x)$. The reader can check that the rule $J_{at}$ gives a Grothendieck topology if and only if $\C$ satisfies the right Ore condition; see \cite[III.2, Example (f)]{MM}. We call this topology the \textit{atomic topology}.

The following lemma gives a convenient criterion to determine whether a given presheaf over a category is a sheaf with respect to the atomic topology.

\begin{lemma}\cite[III.4, Lemma 2]{MM} \label{simple criterion}
Let $\C$ be a category with the atomic topology and $F$ a presheaf of sets over $\C$ . Then $F$ is a sheaf of sets on $\BC$ if and only if for any morphism $f: y \to x$, any morphisms $g$ and $h$ from a fixed domain to $y$, and any $v \in F(y)$ such that $v \cdot g = v \cdot h$ whenever $fg = fh$, then there exists a unique $u \in F(x)$ such that $u \cdot f = v$.
\end{lemma}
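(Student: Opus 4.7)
The plan is to work with the \emph{principal sieve} $\langle f \rangle = \{f \circ g \mid g \text{ is composable with } f\}$ generated by a single morphism $f \colon y \to x$. In the atomic topology this is a nonempty, hence covering, sieve on $x$, and both directions of the equivalence reduce to exploiting the matching-family/amalgamation bijection on such principal sieves together with the right Ore condition.

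For the forward direction I assume $F$ is a sheaf and fix $f \colon y \to x$ together with $v \in F(y)$ satisfying $v \cdot g = v \cdot h$ whenever $fg = fh$. I would define a family on $\langle f \rangle$ by choosing, for each $\phi \in \langle f \rangle$, an arbitrary factorization $\phi = f \circ g$ and setting $v_\phi := v \cdot g$. The hypothesized compatibility is precisely what is needed to make $v_\phi$ independent of the chosen factorization, and a direct check using associativity of the action produces a matching family. The sheaf axiom then yields a unique amalgamation $u \in F(x)$, which in particular satisfies $u \cdot f = v_f = v$; uniqueness of $u$ follows because any competitor $u'$ with $u' \cdot f = v$ amalgamates the same matching family on $\langle f \rangle$.

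For the reverse direction I assume the stated condition and take an arbitrary matching family $\{v_\phi\}_{\phi \in S}$ on a nonempty covering sieve $S$ of $x$. I pick any $f \in S$; the matching condition forces $v_f$ to satisfy the compatibility hypothesis, since $fh_1 = fh_2$ implies $v_f \cdot h_1 = v_{fh_1} = v_{fh_2} = v_f \cdot h_2$, so the assumption produces a unique $u \in F(x)$ with $u \cdot f = v_f$. The real content is to check that $u \cdot \phi = v_\phi$ for \emph{every} $\phi \in S$. For this I invoke the right Ore condition on the cospan $f, \phi$ to obtain morphisms $g, g'$ with $f g = \phi g'$; since $S$ is a sieve this common composite lies in $S$, and a short manipulation using the matching condition yields $(u \cdot \phi) \cdot g' = v_\phi \cdot g'$ in $F(\operatorname{dom}(g'))$.

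The main obstacle is to promote this equality after precomposition with $g'$ to the actual equality $u \cdot \phi = v_\phi$ in $F(\operatorname{dom}(\phi))$. The key observation I plan to use is that the hypothesis is an \emph{existence and uniqueness} statement valid for every morphism, including $g'$ itself. Applying it to $g'$ together with the element $v_\phi \cdot g'$, whose compatibility with $g'$ is automatic, the uniqueness clause identifies the two candidates $u \cdot \phi$ and $v_\phi$ since both restrict to $v_\phi \cdot g'$. This supplies the missing cancellation; uniqueness of the global amalgamation $u$ is then immediate from the uniqueness part of the hypothesis applied to $f$.
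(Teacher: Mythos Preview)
The paper does not supply its own proof of this lemma; it is simply quoted from \cite[III.4, Lemma 2]{MM}. Your argument is correct and is essentially the standard one: the forward direction builds a matching family on the principal sieve $\langle f\rangle$ and invokes the sheaf condition, while the reverse direction uses the right Ore condition to compare an arbitrary $\phi\in S$ with the chosen $f$, and then uses the uniqueness clause of the hypothesis (applied to the auxiliary morphism $g'$) as a cancellation device to upgrade $(u\cdot\phi)\cdot g' = v_\phi\cdot g'$ to $u\cdot\phi = v_\phi$. One small stylistic point: when you say the compatibility of $v_\phi\cdot g'$ with $g'$ is ``automatic,'' it may help the reader to spell out the one-line check $g'h_1=g'h_2 \Rightarrow (v_\phi\cdot g')\cdot h_1 = v_\phi\cdot(g'h_1)=v_\phi\cdot(g'h_2)=(v_\phi\cdot g')\cdot h_2$, since this is exactly where the argument hinges.
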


\begin{remark}\label{rmk 2.6} \normalfont
When replacing the words ``a unique" in the last sentence of this lemma by ``at most one", we obtain a criterion for a presheaf of sets to be separated.
\end{remark}

Recall that a presheaf $F$ is \textit{constant} if $F(x) = F(y)$ for any objects $x$ and $y$, and any morphism $f: y \to x$ acts (on the right side) as the identity map $F(y) \to F(x)$. It is easy to see that the constant presheaf is separated since every morphism acts as an injective map, and furthermore, by the previous lemma, it is actually a sheaf for the atomic topology.

\subsection{Sheaves of modules} \label{sheaves of modules}

To consider applications of sheaf theory in representation theory, we need to study sheaves with extra algebraic structures, in particular, ringed sites and sheaves of modules. In this subsection we only give necessary definitions. For more details, please refer to \cite[Chapter 18]{Stack}.

\begin{definition}
Let $\boldsymbol{\C} = (\C, J)$ be a site. A \textit{presheaf of abelian groups} is a contravariant functor $\mathcal{O}: \C \to \mathrm{Ab}$, the category of abelian groups; it is called a \textit{sheaf of abelian groups} if the underlying presheaf of sets is a sheaf. Abelian presheaves (resp. sheaves) form abelian categories $\PSh(\C, \mathbb{Z})=\PSh(\C, \mathrm{Ab})$ (resp. $\Sh(\BC, \mathbb{Z})=\Sh(\BC, \mathrm{Ab})$).

More generally we have the category of (pre)sheaves of $R$-modules, and it is an abelian category $\PSh(\C, R) = \PSh(\C, R\Mod)$ (or $\Sh(\BC, R) = \Sh(\BC, R\Mod))$.

Similarly, one can define presheaves and sheaves of rings.
\end{definition}

\begin{definition}
Let $\mathcal{O}$ be a sheaf of rings. The pair $(\BC, \mathcal{O})$ is called a \textit{ringed site}, and $\mathcal{O}$ is called the \textit{structure sheaf} of the ringed site.
\end{definition}

\begin{definition}
Let $(\BC, \mathcal{O})$ be a ringed site. A \textit{presheaf of $\mathcal{O}$-modules} is a presheaf $F$ of abelian groups and a map of presheaf of sets
\[
\mathcal{O} \times F \to F
\]
such that for every object $x$, the map
\[
\mathcal{O}(x) \times F(x) \to F(x)
\]
defines an $\mathcal{O}(x)$-module structure of $F(x)$. The subcategory of $\PSh(\BC,\mathrm{Ab})$ consisting of them is denoted by $\PSh(\C, \mathcal{O})$.

We say that $F$ is a \textit{sheaf of $\mathcal{O}$-modules} if the underlying presheaf of sets is a sheaf. The subcategory of $\Sh(\BC,\mathrm{Ab})$ that they form is denoted by $\mathcal{O}\Mod$.
\end{definition}

Note that $\PSh(\C, \mathcal{O})$ is an abelian category, and kernels and cokernels are computed in the underlying category $\PSh(\C, \mathrm{Ab})$. The category $\mathcal{O} \Mod$ is also abelian: kernels are computed in $\PSh(\C, \mathcal{O})$; to compute cokernels, we first obtain the cokernels in $\PSh(\C, \mathcal{O})$, and then apply the sheafification functor $\sharp$. Furthermore, these two abelian categories are Grothendieck categories, and hence every object in them has an injective hull.

When $\mathcal{O}=\RR$ is the constant sheaf (of rings), we have $\RR\Mod=\Sh(\BC,R)$.

\section{Sheaves of modules on atomic sites}

From now on we suppose that $\C^{\op}$ \footnote{The reason we consider sheaf theory of $\C^{\op}$ rather than that of $\C$ is that presheaves of $\C^{\op}$ are exactly $\C$-modules, and in the main body of this paper we will study representation theory of $\C$ instead of that of $\C^{\op}$.} is a skeletally small category satisfying the right Ore condition, and impose the atomic topology $J_{at}$ on $\C^{\op}$; that is, for any object $x$, $J_{at}(x)$ is the collection of subfunctors $S$ of $\C^{\op}(-, x) = \C(x, -)$ such that $\C(x, y) \neq \emptyset$ for at least one object $y$. We also fix $\boldsymbol{\C}^{\op}$ to be the pair $(\C^{\op}, J_{at})$.

Let $R$ be a commutative ring and $\RR$ be the constant presheaf of rings such that $\RR(x) = R$ for every object $x$ in $\C^{\op}$. By Lemma \ref{simple criterion}, $\RR$ is a sheaf of rings. Let $(\BC^{\op}, \RR)$ be the ringed site. The main goal of this section is to characterize sheaves of $\RR$-modules in terms of representations of $\C$, and describe the structure of $\Sh(\BC^{\op}, R)$.

Since $\RR$ is a constant sheaf of rings, an object in $\PSh(\C^{\op}, R)$ is nothing but a \textit{$\C$-module}; that is, a covariant functor from $\C$ to the category $R \Mod$. Consequently, objects in $\Sh(\BC^{\op}, R)$ are $\C$-modules such that the underlying presheaves of sets are actually sheaves. Thus we may freely use the language of representation theory of $\C$. In particular, we also denote $\PSh(\C^{\op}, R)$ by $\C \Mod$.

\subsection{Characterizations of sheaves} Now let $V$ be an object in $\PSh(\C^{\op}, R)$, or equivalently, a $\C$-module over the coefficient ring $R$. For an object $x$ in $\C$, we denote the value of $V$ on $x$ by $V_x$. An element $v \in V_x$ is \textit{torsion} if there is a morphism $f: x \to y$ in $\C$ such that $f \cdot v = 0$. Define a sub-presheaf $V_T$ of $V$ by setting
\[
V_T (x) = \{ v \in V_x \mid v \text{ is torsion } \}
\]
for each $x \in \Ob(\C)$. The following lemma shows that $V_T$ is indeed a submodule of $V$.

\begin{lemma}
$V_T$ is a submodule of $V$.
\end{lemma}

\begin{proof}
It is clear that $V_T(x)$ is closed under scalar multiplication by elements in $R$ for $x \in \Ob(\C)$. It is also closed under addition. Indeed, given $v, v' \in V_T(x)$, one may find $f: x \to y$ and $f': x \to y'$ such that $f \cdot v = 0 = f' \cdot v'$. Now by the left Ore condition, we may find another object $z$ and morphisms $g: y \to z$ and $g': y' \to z$ such that $gf = g'f'$. Then one has
\[
gf \cdot (v + v') = gf \cdot v + g'f' \cdot v' = 0,
\]
so $v + v'$ is also torsion.

It remains to prove the following statement: for any object $x$, any torsion element $v \in V_x$, and any morphism $f: x \to y$, $f \cdot v \in V_y$ is also a torsion element. Since $v$ is torsion, we can find a morphism $f_x: x \to z$ such that $f_x \cdot v = 0$. By the left Ore condition on $\C$, we obtain a commutative diagram:
\[
\xymatrix{
x \ar[r]^-{f_x} \ar[d]_f & z \ar[d]^-g \\
b \ar[r]_-{g'} & a.
}
\]
Consequently, $g' \cdot (f \cdot v_x) = (gf_x) \cdot v_x = g \cdot (f_x \cdot v) = 0$, so $f \cdot v_x$ is torsion as claimed.
\end{proof}

The left Ore condition is essential for us the impose the atomic topology, and is also essential for us to define the torsion theory, as shown in the following example.

\begin{example} \normalfont
Let $Q$ be the Kronecker quiver with two objects $x$ and $y$, and two arrows $\alpha$ and $\beta$ from $x$ to $y$. Consider $V = \mathbb{R}Q / \langle \beta \rangle$. Then the image of $1_x$ (the identity morphism on $x$) in the quotient $V$ (still denote it by $1_x$) is torsion, since $\beta$ maps it to 0. However, $\alpha \cdot 1_x = \alpha$ is not torsion. Thus the torsion elements do not form a submodule of $V$.
\end{example}

It is obvious that $V_T$ is the maximal torsion submodule of $V$. We say that $V$ is \textit{torsion free} if $V_T = 0$, and is \textit{torsion} if $V_T = V$. Moreover, one can easily check that the category $\C \Mod^{\tor}$ of torsion $\C$-modules is an abelian subcategory of $\C \Mod$, and the assignment $V \to V_T$ gives rise to a left exact functor $\tau$, called the \textit{torsion functor}. The category $\C \Mod^{\mathrm{tf}}$ consisting of torsion free $\C$-modules is in general not abelian. However, these two subcategories form a \textit{torsion pair} in the following sense:

\begin{lemma} \label{torsion pair}
Notation as above. Then one has:
\begin{enumerate}
\item $V/V_T$ is torsion free;
\item if $V$ is a torsion $\C$-module and $W$ is torsion free $\C$-module, then $\Hom_{\C \Mod} (V, W) = 0$;
\item $\Hom_{\C \Mod} (V, W) = 0$ for all $W \in \C \Mod^{\mathrm{tf}}$ if and only if $V \in \C \Mod^{\tor}$;
\item dually, $\Hom_{\C \Mod} (V, W) = 0$ for all $V \in \C \Mod^{\tor}$ if and only if $W \in \C \Mod^{\mathrm{tf}}$.
\end{enumerate}
\end{lemma}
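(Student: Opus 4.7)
The plan is to prove the three parts in sequence, using part (1) to supply one direction of parts (2) and (3), and to use the maximal torsion submodule $V_T$ (whose existence was just established in Lemma 3.1) to produce the ``witnesses'' needed for the converse implications.

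For part (1), I would take a morphism $\phi\colon V\to W$ with $V$ torsion and $W$ torsion free, and argue pointwise: given any $v\in V_x$, torsionness of $V$ provides a morphism $f\colon x\to y$ in $\C$ with $f\cdot v=0$, so by naturality of $\phi$ we get $f\cdot\phi_x(v)=\phi_y(f\cdot v)=0$. Thus $\phi_x(v)\in W_y$ is torsion, and since $W$ is torsion free we conclude $\phi_x(v)=0$. As this holds at every $x$ and for every $v$, $\phi=0$.

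For parts (2) and (3), the $\Leftarrow$ directions follow immediately from part (1). For the $\Rightarrow$ directions, the natural candidates for a nonzero morphism are the canonical quotient $V\twoheadrightarrow V/V_T$ (for part (2)) and the canonical inclusion $W_T\hookrightarrow W$ (for part (3)). In part (3) the strategy is immediate: $W_T$ is torsion by construction, so the hypothesis forces the inclusion $W_T\hookrightarrow W$ to vanish, i.e.\ $W_T=0$, which means $W$ is torsion free. In part (2) the same idea works provided we know that $V/V_T$ is torsion free: for then the projection $V\twoheadrightarrow V/V_T$ must be zero by hypothesis, forcing $V=V_T$.

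The one step that requires a small verification, and is really the only nontrivial point, is that $V/V_T$ is torsion free. I would check this directly from the definition: pick $\bar v\in(V/V_T)_x$ and a morphism $f\colon x\to y$ with $f\cdot\bar v=0$; this says $f\cdot v\in V_T$, so there is some $g\colon y\to z$ with $g\cdot(f\cdot v)=(gf)\cdot v=0$, which means $v$ itself is torsion, hence $v\in V_T$ and $\bar v=0$. Notice that this argument does not invoke the left Ore condition (which was needed only to show that $V_T$ is closed under sums in Lemma 3.1); the torsion-free quotient argument is purely formal once $V_T$ is known to be a submodule. With this observation in hand, the three parts assemble into the standard torsion-pair axioms.
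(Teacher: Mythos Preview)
Your proposal is correct and follows essentially the same route as the paper: part (1) via naturality of $\phi$ together with torsion-freeness of $W$, and parts (2) and (3) via the canonical maps $V\twoheadrightarrow V/V_T$ and $W_T\hookrightarrow W$. The paper simply takes for granted that $V/V_T$ is torsion free, whereas you supply the short verification; this is a welcome addition rather than a deviation. One small slip: in part (1) you write ``$\phi_x(v)\in W_y$ is torsion'' where you mean $\phi_x(v)\in W_x$.
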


\begin{proof}
If $V/V_T$ is not torsion free, then by definition $(V/V_T)_T \neq 0$. In particular, one can find an object $x$, a nonzero element $\bar{v} \in (V/V_T)_x$, and a morphism $f: x \to y$ such that $f \cdot \bar{v} = 0$. Take a preimage $v \in V_x$ of $\bar{v}$. Then $f \cdot v \in (V_T)_y$ is torsion, and hence one can find another morphism $g: y \to z$ such that $g \cdot (f \cdot v) = 0$. In particular, $v$ is torsion as well, so $\bar{v} = 0$. This contradiction tells us that $V/V_T$ is torsion free.

Suppose that there is a nonzero homomorphism $\varphi: V \to W$. Then in particular there exist an object $x$ and a nonzero element $v \in V_x$ such that $\varphi_x(v) \neq 0$. But since $V$ is torsion, one can find a morphism $f: x \to y$ such that $f \cdot v = 0$. Consequently, $f \cdot \varphi_x(v) = \varphi_y (f \cdot v) = 0$. But $\varphi_x(v) \in W_x$, so $W$ is not torsion free, and hence (2) follows from this contradiction.

The ``if" directions of (3) and (4) are clear. For the other directions, take $W$ (resp., $V$) to be $V/V_T$ (resp., $W_T$). Then $\Hom_{\C \Mod} (V, V/V_T) = 0$ or $\Hom_{\C \Mod} (W_T, W) = 0$ implies that $V/V_T = 0$ or $W_T = 0$.
\end{proof}

The following lemma interprets separated presheaves as torsion free modules.

\begin{lemma} \label{separated presheaves}
Separated presheaves in $\PSh(\C^{\op}, R)$ are precisely torsion free $\C$-modules.
\end{lemma}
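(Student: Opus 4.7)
The plan is to invoke Remark \ref{rmk 2.6}, which provides the separated-presheaf analogue of the simple criterion of Lemma \ref{simple criterion}, with ``exactly one'' weakened to ``at most one''. My first step is to transcribe this criterion from the presheaf-on-$\C^{\op}$ formulation into the $\C$-module language: a $\C$-module $V$ should be separated precisely when, for every morphism $f: x \to y$ in $\C$ and every $v \in V_y$ satisfying the compatibility condition that $g \cdot v = h \cdot v$ whenever $g, h$ are morphisms out of $y$ with $fg = fh$, there is at most one $u \in V_x$ with $f \cdot u = v$.

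For the direction torsion-free implies separated, I would simply note that if $u_1, u_2 \in V_x$ both satisfy $f \cdot u_i = v$, then $f \cdot (u_1 - u_2) = 0$, so $u_1 - u_2$ is a torsion element at $x$ and therefore zero. It is worth observing that the compatibility hypothesis on $v$ plays no role in this direction; uniqueness follows solely from the absence of torsion.

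For the converse, I would take any torsion element $u \in V_x$, witnessed by some $f: x \to y$ with $f \cdot u = 0$, and apply the separatedness criterion to this $f$ and to $v := 0 \in V_y$. The compatibility condition is then vacuous because $g \cdot 0 = 0 = h \cdot 0$ for all $g, h$, so the criterion delivers at most one element of $V_x$ mapping to $0$ under $V(f)$. Both $0$ and $u$ are such elements, forcing $u = 0$ and hence $V \in \C\Mod^{\mathrm{tf}}$.

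The main obstacle is essentially bookkeeping: one must faithfully convert the atomic-topology axioms from $\C^{\op}$ back into the covariant $\C$-module setting, being careful with the directions of the arrows, the side on which morphisms act, and the composition order. Once this translation is in place, each implication collapses to a single observation about $\ker V(f)$, and no further appeal to the Ore condition is needed beyond what already underlies the well-definedness of $\tau$.
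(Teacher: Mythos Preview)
Your proposal is correct and matches the paper's approach: both invoke Remark~\ref{rmk 2.6}, reduce the question to injectivity of each map $V(f)$, and for the converse take $v = 0$ and exhibit $0$ and a nonzero torsion element as two distinct preimages. One minor transcription slip: in the $\C$-module formulation the compatibility condition should read $gf = hf$ (with $f: x \to y$ and $g, h: y \to z$ in $\C$) rather than $fg = fh$, but as you yourself observe, this hypothesis plays no role in either direction, so the slip is harmless.
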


\begin{proof}
Suppose that $V$ is a torsion free module. Note that $V$ is torsion free if and only if for every morphism $f: x \to y$ in $\C$, the corresponding $R$-linear homomorphism $V_f: V_x \to V_y$ is injective. By Remark \ref{rmk 2.6} (note that we have to reverse all arrows since we are working with $\C^{\op}$ rather than $\C$), $V$ is a separated presheaf of $R$-modules. Conversely, if $V$ is not torsion free, then there exists a morphism $f: x \to y$ such that $V_f: V_x \to V_y$ is not injective. In particular, if we let $g = h = 1_y$ and let $v = 0$  in Lemma \ref{simple criterion}, then one can find two distinct elements (one of which is 0) in $V_x$ such that both are sent to $v = 0 \in V_y$ by $V_f$, and hence $V$ is not a separated presheaf.
\end{proof}

Our next task is to characterize sheaves. For this purpose, we introduce a few more notions.

The functor $\tau: \C \Mod \to \C \Mod^{\tor}$ defined by $V \mapsto V_T$ is left exact, and hence have right derived functors $\mathrm{R}^i \tau$, $i \in \N$. To compute $\mathrm{R}^i \tau (V)$ for a $\C$-module $V$, we only need to apply the functor $\tau$ to an injective resolution of $V$, and cohomology groups of this complex give $\mathrm{R}^i \tau(V)$ for $i \geqslant 0$. Clearly, $V$ is torsion free if and only if $\mathrm{R}^0 \tau (V) = \tau V = V_T = 0$.

\begin{definition} \label{saturated modules}
A $\C$-module $V$ is said to be \textit{saturated} if $\mathrm{R}^i \tau (V) = 0$ for $i = 0, 1$.
\end{definition}

The next lemma tells us that saturated modules are precisely modules right perpendicular to torsion modules in the sense of Geigle and Lenzing (see the beginning of \cite[Section 1]{GLen}). Thus the above definition is equivalent to the one used in \cite[Subsection 4.1]{SS}, where a module is called saturated if it is torsion free and has no nontrivial extension by torsion modules.

\begin{lemma} \label{saturated modules}
A $\C$-module $V$ is saturated if and only if $V$ is right perpendicular to all torsion modules.
\end{lemma}

\begin{proof}
The conclusion follows from a straightforward homological argument. For the convenience of the reader we provide a sketch of the proof. Clearly, $V$ is torsion free if and only if $\Hom_{\C \Mod}(T, V)$ vanishes for any torsion module $T$. Thus it suffices to show the conclusion for torsion free modules. Suppose that $V$ is torsion free. Since $\C \Mod$ is a Grothendieck category, $V$ has an injective hull $E$ in $\C \Mod$. We claim that $E$ is torsion free. Otherwise, it has a nonzero torsion submodule $E_T$. In this case, the intersection $V \cap E_T$ must be 0 since $V$ is torsion free and $E_T$ is torsion. This contradicts the fact that the inclusion $V \to E$ is essential by \cite[Definition 47.2.1]{Stack}.

Note that $E$ is torsion free and injective. Applying the functor $\Hom_{\C \Mod} (T, -)$ to the short exact sequence $0 \to V \to E \to E/V \to 0$ we deduce that $V$ is right perpendicular to every torsion module $T$ if and only if $E/V$ is torsion free, which is equivalent to saying that $V$ is saturated via applying the functor $\tau$ to the short exact sequence.
\end{proof}

Now we are ready to prove the main result of this subsection.

\begin{theorem} \label{main result 1}
A presheaf in $\PSh(\C^{\op}, R)$ is a sheaf of $R$-modules if and only if it is saturated as a $\C$-module.
\end{theorem}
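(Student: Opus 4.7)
The strategy is to first recast the sheaf criterion of Lemma \ref{simple criterion} in $\C$-module language: $V$ is a sheaf if and only if, for every morphism $f \colon x \to y$ in $\C$ and every $v \in V_y$ satisfying $g \cdot v = h \cdot v$ for all $g, h \colon y \to z$ with $gf = hf$, there exists a unique $u \in V_x$ with $f \cdot u = v$. Applying the uniqueness clause to $v = 0$ forces each structure map $V_f$ to be injective, which, in view of Lemma \ref{separated presheaves}, is equivalent to $V$ being torsion-free; this takes care of $\mathrm{R}^0 \tau(V) = 0$ on both sides of the equivalence.

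For the direction saturated $\Rightarrow$ sheaf, I would use Proposition \ref{saturated modules} (3) to realize a saturated $V$ as the kernel of a morphism between two torsion-free injective $\C$-modules, and then reduce the claim to two sub-facts. First, every torsion-free injective $I$ is a sheaf: under the Yoneda identification $V_y \cong \Hom_{\C \Mod}(P(y), V)$, the compatibility condition on $v$ is exactly that the corresponding $\bar v \colon P(y) \to I$ vanishes on $\ker P(f)$ and hence factors through $\Image(P(f)) \hookrightarrow P(x)$; injectivity of $I$ extends this factorization along the inclusion to a morphism $P(x) \to I$, producing $u$, while torsion-freeness supplies the uniqueness. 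Second, the kernel of a morphism between sheaves of this form is again a sheaf: torsion-freeness passes to submodules, and the lift of $v$ produced in the ambient injective automatically lies in the kernel because its image in the codomain is torsion inside a torsion-free module.

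For the converse direction sheaf $\Rightarrow$ saturated, I would verify the criterion of Proposition \ref{saturated modules} (4). The vanishing $\Hom_{\C \Mod}(T, V) = 0$ for torsion $T$ is Lemma \ref{torsion pair} (2) combined with the torsion-freeness of $V$ already established. To obtain $\Ext^1_{\C \Mod}(T, V) = 0$, I will split an arbitrary short exact sequence $0 \to V \to E \to T \to 0$ with $T$ torsion by constructing a retraction $\rho \colon E \to V$ directly from the sheaf criterion. Given $e \in E_y$, choose any $f \colon y \to z$ killing the torsion image $\bar e \in T_y$; then $f \cdot e$ lies in $V_z$, and the compatibility condition for the pair $(f, f \cdot e)$ in $V$ is inherited from the identity $gf \cdot e = hf \cdot e$ valid in $E$ whenever $gf = hf$. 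The sheaf criterion applied to $V$ then yields a unique $\rho_y(e) \in V_y$ with $f \cdot \rho_y(e) = f \cdot e$, and choosing $f = \mathrm{id}_y$ when $e \in V_y$ forces $\rho|_V = \mathrm{id}_V$, so the sequence splits.

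The principal technical obstacle is checking that this $\rho$ is independent of the auxiliary choice of $f$, is $R$-linear, and is natural in $y$. Each verification is handled by combining the left Ore condition on $\C$ — which produces, for any finite collection of morphisms out of $y$, a single morphism through which they commonly factor — with the uniqueness clause of the sheaf criterion, so as to collapse a defining identity on the larger object down to one on $V_y$. Though conceptually routine, this interplay between the Ore condition and the sheaf axiom carries the real content of the converse direction and is where the bulk of the work resides.
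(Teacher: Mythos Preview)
Your argument is correct, but it proceeds along a genuinely different route from the paper's own proof. For the direction saturated $\Rightarrow$ sheaf, the paper does \emph{not} argue directly; instead it first proves sheaf $\Rightarrow$ saturated, then invokes the plus construction $V \hookrightarrow V^+$ from general sheaf theory (so that $V^+$ is a sheaf, hence saturated by the direction already shown), and finally argues by contradiction that a failure of the sheaf axiom for $V$ would force $V^+/V$ to have torsion, contradicting $\mathrm{R}^1\tau(V)=0$. Your approach bypasses the plus construction entirely: you show directly via Yoneda and injectivity that torsion-free injective $\C$-modules are sheaves, then use Proposition \ref{saturated modules}(3) to present any saturated module as a kernel between two such objects. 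For the direction sheaf $\Rightarrow$ saturated, the paper embeds $V$ into a torsion-free injective $I$ and shows $I/V$ is torsion-free by lifting a would-be torsion element back to $I$ and using the sheaf property of $V$ to amalgamate; you instead verify Proposition \ref{saturated modules}(4) by explicitly splitting any extension by a torsion module, building the retraction elementwise from the sheaf criterion and the Ore condition.

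What each approach buys: the paper's argument is shorter and leans on the existing machinery of $(-)^+$, at the cost of importing that machinery as a black box. Your argument is more self-contained and elementary, never invoking $(-)^+$; it also makes transparent, as a byproduct, exactly why torsion-free injectives are sheaves (which the paper obtains only afterwards as Corollary \ref{injective sheaves}). The Ore/uniqueness bookkeeping you flag for the retraction $\rho$ is indeed routine---each check reduces to $f \cdot (\text{difference}) = 0$ in the torsion-free module $V$---so there is no hidden obstruction.
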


\begin{proof}
Suppose that $V$ is an object in $\Sh(\BC^{\op}, R)$. In particular, it is a separated presheaf. By Lemma \ref{separated presheaves}, $V$ is torsion free. Since the category of torsion modules is a localizing subcategory by \cite[Proposition 2.5]{GLen}, applying \cite[Proposition 2.2]{GLen} we obtain a short exact sequence
\[
0 \to V \to \tilde{V} \to W \to 0
\]
such that $W$ is torsion and $\tilde{V}$ is right perpendicular to all torsion modules, and hence saturated by Lemma \ref{saturated modules}. To show that $V$ is saturated, it suffices to check that $W$ is torsion free, which forces $W = 0$ and hence $V \cong \tilde{V}$.

Suppose that there exist an object $x$, an element $w \in W_x$, and a morphism $f: x \to y$ in $\C$ such that $f \cdot w = 0$. Let $\tilde{w} \in \tilde{V}_x$ be a preimage of $w$ and consider the left ideal $S = \{gf \mid g: y \to \bullet \}$ of $\C$ generated by $f$ (which is a covering sieve of $x$ in $\C^{\op}$). Since $gf \cdot w = 0$, one deduces that $gf \cdot \tilde{w} \in V_{\bullet}$. Thus $\{g f \cdot \tilde{w} \mid g: y \to \bullet \}$ is a matching family for $S$ of elements of $V$. Since $V$ is a sheaf, one can find an amalgamation $\bar{w} \in V_x \subseteq \tilde{V}_x$ such that $ gf \cdot \bar{w} = gf \cdot \tilde{w}$ for every $g$ with domain $y$. In particular, by taking $g = 1_y$ one shall have $f \cdot \tilde{w} = f \cdot \bar{w}$. But $\tilde{V}$ is torsion free, so $\tilde{w} = \bar{w} \in V_x$, and hence $w$ as the image of $\tilde{w}$ is 0. Consequently, $W$ is torsion free as desired.

Conversely, suppose that $V$ is saturated as a $\C$-module, and in particular it is torsion free. Therefore, $V$ is a separated presheaf in $\PSh(\C^{\op}, R)$. By \cite[III.5, Lemma 2]{MM}, the canonical map $\eta: V \to V^+$ is injective where $V^+$ is the sheafification of $V$, and by \cite[III.5, Lemma 5]{MM}, $V^+$ is a sheaf in $\Sh(\BC^{\op}, R)$. Therefore, one obtains the following exact sequence of $\C$-modules
\[
0 \to V \to V^+ \to W \to 0.
\]
Note that $V^+$ is a saturated $\C$-module by the result we just proved. We show that $V$ is a sheaf by contradiction.

If $V$ is a separated presheaf in $\PSh(\C^{\op}, R)$ but is not a sheaf in $\Sh(\BC^{\op}, R)$, then there exist an object $x$, a nonempty covering sieve $S \subseteq \C(x, -) = \C^{\op}(-, x)$ of $x$, and a matching family $ \{v_f \in V_y \mid (f: x \to y) \in S \}$ for $S$ of elements of $V$ satisfying $g \cdot v_f = v_{gf}$ for any morphism $g: y \to \bullet$, such that there does not exist an element $v \in V_x$ satisfying $f \cdot v = v_f$ for all $f \in S$. Now identify $V$ with a subpresheaf of $V^+$ via the canonical map $\eta$. The above matching family of elements of $V$ becomes a matching family of elements of $V^+$. Since $V^+$ is a sheaf, we can find an element $\tilde{v} \in V_x^+$ serving as the unique amalgamation of this matching family of elements of $V^+$. Clearly, $\tilde{v}$ is not contained in $V_x$ since otherwise $\tilde{v}$ as an element in $V_x$ can serve as the unique amalgamation of the matching family of elements of $V$. Consequently, the image $\bar{v} \in W_x$ of $\tilde{v}$ is nonzero. However, for every $f: x \to \bullet$ in $S$, one has $f \cdot \tilde{v} = v_f \in V_{\bullet}$, and hence in the quotient module $W$ one must have $f \cdot \bar{v} = 0$. That is, $W$ not torsion free. Applying $\tau$ to the above short exact sequence we have:
\[
0 \to \tau V = 0 \to \tau V^+ = 0 \to \tau W \to \mathrm{R}^1 \tau (V) \to \mathrm{R}^1 \tau (V^+) = 0 \to \ldots.
\]
Consequently, $\mathrm{R}^1 \tau (V) \cong \tau W \neq 0$, and hence $V$ is not saturated. This contradiction tells us that $V$ must be a sheaf.
\end{proof}

We obtain the following immediate corollaries.

\begin{corollary} \label{injective sheaves}
Injective objects in $\Sh(\BC^{\op}, R)$ are precisely torsion free injective $\C$-modules.
\end{corollary}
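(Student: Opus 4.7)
The plan is to prove both inclusions via the sheafification/inclusion adjunction together with the saturation characterisation of sheaves provided by Theorem 3.5. Write $\iota: \Sh(\BC^{\op}, R) \to \C\Mod$ for the inclusion and $\sharp$ for its left adjoint, the sheafification functor. The key observation I would exploit is that $\sharp$ is exact (it is a left adjoint and, as noted in the introduction, is moreover a localization functor), so $\iota$ preserves injective objects. All subsequent work reduces to cleanly packaging this with Theorem 3.5.

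For the forward direction, suppose $V$ is injective in $\Sh(\BC^{\op}, R)$. Since $\iota$ is right adjoint to the exact functor $\sharp$, the object $\iota(V) = V$ is injective in $\C\Mod$. On the other hand, $V$ being a sheaf forces $V$ to be saturated by Theorem 3.5, and in particular torsion free. So $V$ is a torsion free injective $\C$-module, as required.

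For the reverse direction, suppose $V$ is a torsion free injective $\C$-module. First I would check that $V$ lies in $\Sh(\BC^{\op}, R)$: because $V$ is injective, any injective resolution of $V$ can be taken to be $V$ itself concentrated in degree $0$, so $\mathrm{R}^i\tau(V) = 0$ for every $i \geqslant 1$; combined with $\tau V = V_T = 0$, this shows $V$ is saturated, hence a sheaf by Theorem 3.5. To see that $V$ is injective as a sheaf, let $U \hookrightarrow W$ be a monomorphism in $\Sh(\BC^{\op}, R)$ together with a morphism $\varphi: U \to V$. Since $\iota$ is a right adjoint, it is left exact, so $U \hookrightarrow W$ remains a monomorphism in $\C\Mod$; injectivity of $V$ in $\C\Mod$ then produces an extension $\widetilde{\varphi}: W \to V$, which is automatically a morphism of sheaves. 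This verifies the injectivity of $V$ in $\Sh(\BC^{\op}, R)$ and completes the proof.

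There is no genuine obstacle here: the argument is essentially formal once Theorem 3.5 is in hand. The only subtle point is making sure that ``injective implies $\mathrm{R}^1\tau = 0$'' is used correctly to upgrade the assumption ``torsion free'' to ``saturated''; this is immediate, but it is what ties the torsion-free condition on injective $\C$-modules to the sheaf condition and explains why the two classes match up exactly rather than, for instance, only up to direct summands.
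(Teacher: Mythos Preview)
Your proof is correct. The approach differs from the paper's in both directions, though the underlying content is close.

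For the implication ``injective sheaf $\Rightarrow$ torsion free injective $\C$-module'', you invoke the general fact that the right adjoint $\iota$ of the exact functor $\sharp$ preserves injectives. The paper instead argues by hand: it embeds an injective sheaf $V$ into a torsion free injective $\C$-module $I$ via Lemma~\ref{embedding}, notes that this monomorphism lives in $\Sh(\BC^{\op},R)$ as well, and uses injectivity of $V$ there to split it, exhibiting $V$ as a direct summand of $I$ in $\C\Mod$. For the converse, you verify the lifting property directly using that $\iota$ reflects monomorphisms, whereas the paper checks that any monomorphism $I \hookrightarrow V$ of sheaves splits already in $\C\Mod$ and that the resulting cokernel is again a sheaf.

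Your route is cleaner and more conceptual, but it imports the exactness of sheafification as an external fact; the paper's argument is self-contained within the results already proved in Section~3 (in particular Lemma~\ref{embedding} and Theorem~\ref{main result 1}), which is presumably why the authors chose it. Either way there is no gap.
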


\begin{proof}
Theorem \ref{main result 1} asserts that torsion free injective $\C$-modules $I$ are contained in $\Sh(\BC^{\op}, R)$. They are injective objects in this category. Indeed, given a momomorphism $0 \to I \to V$ in $\Sh(\BC^{\op}, R)$, it is also a monomorphism in $\PSh(\C^{\op}, R)$, and hence splits in the category of presheaves. Since sheafification does not change $V$, $I$ and the morphism, and every functor takes retractions to retractions, it turns out that the above monomorphism actually splits in $\Sh(\BC^{\op}, R)$. Thus $I$ is an injective sheaf. Furthermore, given any $V \in \Sh(\BC^{\op}, R)$, one can obtain a monomorphism $0 \to V \to I$ in $\PSh(\C^{\op}, R)$ such that $I$ is a torsion free injective $\C$-module, which is also a monomorphism in $\Sh(\BC^{\op}, R)$. Thus any sheaf can be embedded into a sheaf which is also a torsion free injective $\C$-module. Consequently, any injective sheaf must be a torsion free injective $\C$-module.
\end{proof}

Let $\C \Mod^{\sat}$ be the full subcategory of $\C \Mod$ consisting of saturated $\C$-modules.

\begin{corollary} \label{equivalences}
One has the following identification and equivalence of categories:
\[
\xymatrix{
\Sh(\BC^{\op}, R) \ar[r]^-{=} & \C \Mod^{\sat} \ar[r]^-{\simeq} & \C \Mod / \C \Mod^{\tor}.
}
\]
\end{corollary}

\begin{proof}
The first identity is clear since both categories are full subcategories of $\C \Mod$ and have the same objects by Theorem \ref{main result 1}. The second equivalence follows immediately from \cite[Proposition 2.2(d)]{GLen}.
\end{proof}

\begin{remark} \normalfont
Although the category $\C \Mod^{\sat}$ is a full subcategory of $\C \Mod$, its abelian structure is not inherited from that of $\C \Mod$. This is an analogue of the following well known fact: the category of sheaves is a full subcategory of the category of presheaves, but they have different abelian structures.
\end{remark}

It is well known that $\Sh(\BC^{\op}, R)$ is a localization of $\PSh(\C^{\op}, R)$ by the Serre subcategory consisting of $\C$-modules $V$ with $V^{\sharp}=0$. In general, it is not an easy task to describe this localization since the definition of $\sharp$ is complicated, see for instance \cite[III.5]{MM}. For the atomic topology, one may get a very transparent description: one takes the torsion free part $V_F$ of $V$, embeds it into an injective hull $I$ (which is saturated) to get a short exact sequence $0 \to V_F \to I \to C \to 0$ of $\C$-modules, and then constructs the pull back of $I \to C$ and $C_T \to C$ to obtain the following commutative diagram such that all rows and columns are exact:
\[
\xymatrix{
0 \ar[r] & V_F \ar[r] \ar@{=}[d] & \widetilde{V} \ar[r] \ar[d] & C_T \ar[d] \ar[r] & 0\\
0 \ar[r] & V_F \ar[r] & I \ar[d] \ar[r] & C \ar[r] \ar[d] & 0\\
 & & C_F \ar@{=}[r] & C_F.
}
\]
Then $V^{\sharp} \cong \widetilde{V}$. Indeed, from the middle column we conclude that $\widetilde{V}$ is a saturated module, and then from the top row we deduce that $V^{\sharp} \cong V_F^{\sharp} \cong \tilde{V}^{\sharp} = \tilde{V}$.

In the rest of this subsection we describe another interpretation of $\Sh(\BC^{\op}, R)$ via the notion of morphism categories. Let $\mathfrak{I}$ be the additive category of torsion free injective $\C$-modules (which are precisely injective objects in $\Sh(\BC^{\op}, R)$). Following from Auslander, Reiten and Smal{\o} (see \cite[IV.1]{ARS}), we define the \textit{morphism category} $\Mor(\mathfrak{I})$ as follows:
\begin{enumerate}
\item[$\bullet$] objects are morphisms $\alpha: I^0 \to I^1$ in $\mathfrak{I}$;
\item[$\bullet$] morphisms from $\alpha: I^0 \to I^1$ to $\beta: J^0 \to J^1$ are pairs $(f^0,f^1)$ such that the following diagram commutes
\[
\xymatrix{
I^0 \ar[r]^-{\alpha} \ar[d]^-{f^0} & I^1 \ar[d]^-{f^1}\\
J^0 \ar[r]^-{\beta} & J^1.
}
\]
\end{enumerate}

We now define the functor $\ker: \Mor(\mathfrak{I}) \to \C \Mod^{\sat}$ as follows:
\begin{enumerate}
\item[$\bullet$] for each object $\alpha: I^0 \to I^1$ in $\Mor(\mathfrak{I})$, $\ker\alpha \in \C \Mod^{\sat}$ by Proposition \ref{saturated modules};
\item[$\bullet$] for each morphism $(f^0,f^1)$ in $\Mor(\mathfrak{I})$ from $\alpha: I^0 \to I^1$ to $\beta: J^0 \to J^1$, we let $\ker(f^0,f^1)$ be the unique morphism such that the diagram
    \[
\xymatrix{
0 \ar[r] & \ker\alpha \ar[d] \ar[r] & I^0 \ar[r]^\alpha \ar[d]^{f^0} & I^1 \ar[d]^{f^1}\\
 0 \ar[r] & \ker\beta \ar[r] & J^0  \ar[r]^\beta & J^1 \\
}
\]
is commutative.
\end{enumerate}
It is straightforward to see that $\ker$ is a full and dense functor, but not faithful. However, the reader can easily check that $\ker(f^0,f^1)=0$ if and only if there is a morphism $f: I^1 \to J^0$ such that $f\alpha=f^0$. Therefore, to obtain an equivalence between categories, we need to modulo some relations.

For two objects $\alpha: I^0 \to I^1$ and $\beta: J^0 \to J^1$ in $\Mor(\mathfrak{I})$, we let $\mathfrak{R}(\alpha,\beta)$ be the subgroup of $\Hom_{\Mor(\mathfrak{I})}(\alpha,\beta)$ consisting of those morphisms $(f^0,f^1)$ such that there exists a morphism $f: I^1 \to J^0$ with $f\alpha=f^0$. Then $\mathfrak{R}$ gives a relation on $\Mor(\mathfrak{I})$ in the sense of \cite{ARS}. As Auslander, Reiten and Smal{\o} proved in \cite[Proposition IV.1.2]{ARS}, we have the following result.

\begin{proposition}
The functor $\ker: \Mor(\mathfrak{I}) \to \C \Mod^{\sat}$ induces an equivalence between $\Mor(\mathfrak{I})/\mathfrak{R}$ and $\C \Mod^{\sat}$.
\end{proposition}

In summary, we obtain the following equivalences of categories:
\[
\xymatrix{
\Sh(\BC^{\op}, R) \ar[r]^-{=} & \C \Mod^{\sat} \ar[r]^-{\simeq} & \C \Mod / \C \Mod^{\tor} \ar[r]^-{\simeq} & \Mor(\mathfrak{I})/\mathfrak{R}.
}
\]

\subsection{Sheaf cohomology}

In this subsection we illustrate a relation between sheaf cohomology functors and derived functors $\mathrm{R}^{\bullet} \tau$ of the torsion functor $\tau$.

Given a presheaf $V \in \PSh(\C^{\op}, R)$, we can construct an injective resolution $0 \to V \to I^{\bullet}$ of presheaves. Now for every object $x \in \C^{\op}$, one defines a functor $\Gamma^p_x: \PSh(\C^{\op}, R) \to R \Mod$ by $V \mapsto V_x$, called an \textit{evaluation} functor, which is exact. Similarly, one can define an evaluation functor $\Gamma_x: \Sh(\BC^{\op}, R) \to R \Mod$, which is only left exact, so one can define $\mathrm{R}^i \Gamma_x(V)$, the $i$-th right derived functor of $\Gamma_x$.

\begin{remark} \normalfont
Note that $\mathrm{R}^i \Gamma_x(V)$ is called the $i$-th \textit{sheaf cohomology group} of $V$ at $x$. For details, please refer to \cite[I.21.2]{Stack}.
\end{remark}

To compute $\mathrm{R}^{\bullet} \Gamma_x(V)$, one constructs an injective resolution $0 \to V \to I^{\bullet}$ of sheaves and apply $\Gamma_x$ to obtain a complex $0 \to \Gamma_x V \to \Gamma_x I^{\bullet}$. Cohomology groups of this complex give $\mathrm{R}^{\bullet} \Gamma_x(V)$. The following result relate $\mathrm{R}^{\bullet} \Gamma_x(V)$ to right derived functors of $\tau$.

\begin{theorem} \label{sheaf cohomology}
For $V \in \Sh(\BC^{\op}, R)$ and object $x$ in $\C^{\op}$, one has $\mathrm{R}^i \Gamma_x(V) \cong (\mathrm{R}^{i+1} \tau (V))_x$ for $i \geqslant 1$, the value of the $\C$-module $\mathrm{R}^{i+1} \tau (V)$ on the object $x$.
\end{theorem}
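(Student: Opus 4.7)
The plan is to compute $\mathrm{R}^i\Gamma_x(V)$ by constructing a resolution of $V$ inside $\Sh(\BC^{\op},R)$ that descends from an injective resolution of $V$ in $\C\Mod$, so that the sheaf-theoretic and representation-theoretic derived functors can be compared directly on a single complex.

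First I would pick an injective resolution $0\to V\to J^\bullet$ in $\C\Mod$. For each $j$, since $J^j$ is injective one has $\mathrm{R}^p\tau(J^j)=0$ for $p\geqslant 1$. Using the description of sheafification recorded after Proposition \ref{sheafification}, or equivalently the natural 4-term sequence $0\to \tau W\to W\to W^\sharp\to \mathrm{R}^1\tau W\to 0$ specialized to $W=J^j$, this yields a short exact sequence of $\C$-modules
\[
0\to \tau J^j\to J^j\to (J^j)^\sharp\to 0.
\]
By the argument used in the proof of Lemma \ref{embedding}, $\tau J^j$ is injective in $\C\Mod$; hence the sequence splits and $(J^j)^\sharp$ is a torsion free injective $\C$-module, so by Corollary \ref{injective sheaves} it is an injective object of $\Sh(\BC^{\op},R)$. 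Since sheafification is exact and $V^\sharp=V$ (as $V$ is a sheaf), applying $\sharp$ termwise to the injective resolution produces an exact sequence $0\to V\to (J^\bullet)^\sharp$ in $\Sh(\BC^{\op},R)$, so $(J^\bullet)^\sharp$ is an injective resolution of $V$ there. Because evaluation at $x$, viewed as a functor on $\C\Mod$, is exact and therefore commutes with the cohomology of a complex, this gives
\[
\mathrm{R}^i\Gamma_x(V)\;=\;H^i\bigl(((J^\bullet)^\sharp)_x\bigr)\;=\;\bigl(H^i((J^\bullet)^\sharp)\bigr)_x,
\]
where the final cohomology is computed in $\C\Mod$.

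To identify that last cohomology I would apply the long exact sequence of cohomology in $\C\Mod$ to the short exact sequence of complexes $0\to \tau J^\bullet\to J^\bullet\to (J^\bullet)^\sharp\to 0$. Since $J^\bullet$ resolves $V$ in $\C\Mod$ we have $H^0(J^\bullet)=V$ and $H^p(J^\bullet)=0$ for $p\geqslant 1$, while $H^p(\tau J^\bullet)=\mathrm{R}^p\tau V$ by definition. The connecting homomorphisms thus force $H^p((J^\bullet)^\sharp)\cong \mathrm{R}^{p+1}\tau V$ for every $p\geqslant 1$ (and $H^0((J^\bullet)^\sharp)=V$, which is consistent because the saturation hypothesis on $V$ gives $\tau V=\mathrm{R}^1\tau V=0$). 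Setting $p=i$ and substituting into the previous display yields $\mathrm{R}^i\Gamma_x(V)\cong (\mathrm{R}^{i+1}\tau V)_x$ for all $i\geqslant 1$, as desired.

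The principal obstacle is the splitting of $0\to \tau J^j\to J^j\to (J^j)^\sharp\to 0$ in $\C\Mod$, equivalently the injectivity of $\tau J^j$ in $\C\Mod$. This is precisely the property invoked in the proof of Lemma \ref{embedding}: it rests on $\tau$ being the right adjoint of the exact inclusion $\C\Mod^{\tor}\hookrightarrow \C\Mod$ together with the hereditary character of the torsion theory (which itself follows from the Ore condition on $\C$). The hypothesis that $\C$ has an initial object is only used implicitly, through the very definition of $\Gamma_x$ as the sheaf-cohomology evaluation functor in the preceding paragraph.
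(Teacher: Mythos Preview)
Your proof is correct and takes a genuinely different route from the paper's. The paper argues by dimension shifting: it chooses a single short exact sequence $0\to V\to I^0\to V^1\to 0$ with $I^0$ torsion free injective, sheafifies to obtain $0\to V\to I^0\to (V^1)^\sharp\to 0$ in $\Sh(\BC^{\op},R)$, and then compares $\Gamma_x$ and $\Gamma_x^p$ on the explicit pullback diagram describing $(V^1)^\sharp$ to extract $\mathrm{R}^1\Gamma_x(V)\cong(\tau V^2)_x\cong(\mathrm{R}^2\tau V)_x$; the general case follows by replacing $V$ with $(V^1)^\sharp$ and inducting. You instead sheafify an entire injective resolution $J^\bullet$ of $V$ in $\C\Mod$ at once, observe that each $(J^j)^\sharp=J^j/\tau J^j$ is torsion free injective (using that $\tau J^j$ is injective, exactly as in the proof of Lemma~\ref{embedding}), and then read off the identification from the long exact sequence attached to the short exact sequence of complexes $0\to\tau J^\bullet\to J^\bullet\to (J^\bullet)^\sharp\to 0$.

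Both arguments hinge on the same structural fact---that the torsion submodule of an injective $\C$-module is a direct summand, so sheafification of injectives is transparent---but your packaging via a single short exact sequence of complexes is cleaner and avoids the inductive bookkeeping, while the paper's step-by-step version is more elementary in that it never needs to manipulate complexes. One small remark: your passing reference to a ``natural 4-term sequence $0\to\tau W\to W\to W^\sharp\to\mathrm{R}^1\tau W\to 0$'' is more than you need (and its validity for arbitrary $W$ requires a separate check); since you only apply it to injective $W=J^j$, it would be tidier to invoke directly the description after Proposition~\ref{sheafification}, which immediately gives $(J^j)^\sharp=J^j/\tau J^j$ once $J^j/\tau J^j$ is seen to be injective.
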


Note that $\mathrm{R}^{i+1} \tau (V)$ is also a $\C$-module, and hence its value on $x$ is well defined.

\begin{proof}
Firstly we take a short exact sequence $0 \to V \to I^0 \to V^1 \to 0$ in $\C \Mod$ such that $I^0$ is a torsion free injective $\C$-module. Note that this is not an exact sequence in $\Sh(\BC^{\op}, R)$ since $V^1$ might not even be a sheaf, although it is a separated presheaf. Thus we apply the functor $\sharp$ to get the following short exact sequence in $\Sh(\BC^{\op}, R)$:
\[
0 \to V^{\sharp} = V \to (I^0)^{\sharp} = I^0 \to (V^1)^{\sharp} \to 0.
\]

 By our description of sheafification functor in subsection 3.1, we have the following commutative diagram of short exact sequences of $\C$-modules:
\[
\xymatrix{
0 \ar[r] & V^1 \ar[r] \ar@{=}[d] & (V^1)^{\sharp} \ar[r] \ar[d] & \tau V^2 \ar[d] \ar[r] & 0\\
0 \ar[r] & V^1 \ar[r] & I^1 \ar[r] & V^2 \ar[r] & 0.
}
\]

Now applying the evaluation functors $\Gamma_x^p$ and $\Gamma_x$, we obtain the following exact sequences
\begin{align*}
& 0 \to \Gamma^p_x V \to \Gamma^p_x I^0 \to \Gamma_x^p V^1 \to 0\\
& 0 \to \Gamma_x V \to \Gamma_x I^0 \to \Gamma_x ((V^1)^{\sharp}) \to \mathrm{R}^1 \Gamma_x(V) \to 0\\
& 0 \to \Gamma^p_x V^1 \to \Gamma^p_x ((V^1)^{\sharp}) \to \Gamma_x^p (\tau V^2) \to 0.
\end{align*}
Note that $\Gamma^p_x F = \Gamma_x F$ whenever $F \in \Sh(\BC^{\op}, R)$. Therefore, by comparing these sequences, one gets
\[
\mathrm{R}^1 \Gamma_x(V) \cong \Gamma_x^p (\tau V^2) \cong (\tau V^2)_x.
\]
However, it is straightforward to check that $\tau V^2 \cong \mathrm{R}^2 \tau (V)$, so the conclusion holds for $i = 1$. Now replacing $V$ by $(V^1)^{\sharp}$ we deduce the isomorphism for $i = 2$. The conclusion follows from an induction.
\end{proof}

\subsection{Finiteness conditions}

For future applications in the upcoming Section 4, we end this section with some remarks on finiteness conditions on presheaves and sheaves, which was dealt by Auslander for functor categories in \cite[Section 1]{Au}. We follow the terminologies from \cite[Section 3.5]{Po}.

\begin{proposition} \label{finite type}
Let $R$ be a commutative ring. Then a presheaf $V \in \PSh(\C^{\op}, R)$ is of finite type if and only if the $\C$-module $V$ is finitely generated.
\end{proposition}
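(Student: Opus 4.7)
The plan is to verify that Popescu's categorical notion of finite type \cite[Section 3.5]{Po}, when unpacked in the Grothendieck category $\PSh(\C^{\op}, R) = \C \Mod$, coincides with the representation-theoretic notion of finite generation. Recall that $V \in \C \Mod$ is of \emph{finite type} if for every directed family of subobjects $\{V_\lambda\}_{\lambda \in \Lambda}$ with $\sum_\lambda V_\lambda = V$ there exists $\lambda_0$ with $V_{\lambda_0} = V$; while $V$ is \emph{finitely generated} if there is a surjection $\pi \colon \bigoplus_{i=1}^n P(x_i) \twoheadrightarrow V$, where $P(x) = R\C(x, -)$ is the representable $\C$-module at $x$. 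The bridge between the two notions is the Yoneda-type isomorphism $\Hom_{\C \Mod}(P(x), V) \cong V_x$: under this identification, the $\C$-submodule generated by a single element $v \in V_x$ is exactly the image of the map $P(x) \to V$ corresponding to $v$.

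For the direction ``finitely generated implies finite type'', I would start from a surjection $\pi \colon \bigoplus_{i=1}^n P(x_i) \twoheadrightarrow V$ and a directed family $\{V_\lambda\}$ with $\sum_\lambda V_\lambda = V$. Setting $v_i := \pi(\mathrm{id}_{x_i}) \in V_{x_i}$, the key observation is that directed sums of subobjects in $\C \Mod$ are computed pointwise in $R \Mod$, so each $v_i$ already lies in some $(V_{\lambda_i})_{x_i}$. Using directedness of the index set, I would choose $\mu \in \Lambda$ with $V_{\lambda_i} \subseteq V_\mu$ for all $i$; then $v_1, \ldots, v_n \in V_\mu$, and since the $v_i$ generate $V$ as a $\C$-module, this forces $V_\mu = V$.

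For the converse, I would exhibit $V$ as the directed sum of its finitely generated $\C$-submodules. Let $\Lambda$ be the set of finite tuples $\lambda = \bigl((x_1, v_1), \ldots, (x_k, v_k)\bigr)$ with $v_j \in V_{x_j}$, ordered by concatenation, and let $V_\lambda$ be the $\C$-submodule generated by the listed elements (equivalently, the image of $\bigoplus_{j=1}^k P(x_j) \to V$ sending the $j$-th basis element to $v_j$). This family is directed, and every element of $V_x$, for every $x$, appears in some $V_\lambda$, so $\sum_\lambda V_\lambda = V$. The finite type hypothesis then furnishes a single tuple $\lambda_0$ with $V_{\lambda_0} = V$, which is exactly a finite generating set for $V$.

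The proof is essentially a translation between vocabularies linked by Yoneda; the only slightly technical point is the assertion that directed sums of subobjects in $\C \Mod$ are computed pointwise, so that membership of an element of $V_x$ in $\sum_\lambda V_\lambda$ is equivalent to its membership in some single $V_\lambda$. I do not anticipate any genuine obstacle beyond this standard verification.
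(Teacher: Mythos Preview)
Your argument is correct. The one technical point you flag --- that directed sums of subobjects in $\C \Mod$ are computed objectwise, so that $(\sum_\lambda V_\lambda)_x = \bigcup_\lambda (V_\lambda)_x$ --- is standard and poses no difficulty; it follows immediately from the fact that (co)limits in functor categories are computed pointwise and that directed unions of $R$-submodules are already submodules.

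The paper does not give an independent argument at all: it simply cites \cite[Chapter 3, Corollary 5.7]{Po} and the remark following it, observing that the small preadditive category there may be taken to be the $R$-linearization of $\C$. What you have written is, in effect, a self-contained proof of that very corollary in this particular functor category. So the two ``approaches'' are not really different in substance --- Popescu's proof proceeds along the same Yoneda-based lines you sketch --- but your version has the virtue of being explicit and readable without chasing the reference, at the modest cost of a paragraph of routine verification. Either is entirely acceptable here.
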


\begin{proof}
This follows from \cite[Chapter 3, Corollary 5.7]{Po} and the remark immediately after this corollary, where the small preaddtive category there can be taken as the $R$-linearization of the category $\C$ considered by us in this paper.
\end{proof}

We denote the category of presheaves of $R$-modules of finite type by $\mathrm{psh}(\C^{\op}, R)$ or $\C \module$, and denote by $\mathrm{sh}(\BC^{\op},R)$ the full subcategory of $\Sh(\BC^{\op}, R)$ consisting of sheaves which, considered as presheaves, are of finite type. Clearly, when the category algebra $R\C$ is locally Noetherian, $\C \module$ is an abelian subcategory of $\C \Mod$, and $\mathrm{sh}(\BC^{\op},R)$ is an abelian subcategory of $\Sh(\BC^{\op}, R)$.

\begin{remark}
We must warn the reader that presheaves of finite type and sheaves over a ringed site of finite type are quite different. Indeed, given a ringed site $(\BC^{\op},\mathcal{O})$, the characterization of sheaves with finiteness conditions seems to be more subtle, which heavily relies on the structure sheaves on $\BC^{\op}$. For instance if we take $\mathcal{O}$ to be the constant sheaf of rings $\RR$, then $\RR \Mod = \Sh(\BC^{\op},R)$. In order to recognize local types of modules, see \cite[Sec 18.1]{KS} or \cite[Sec 18.23]{Stack}, one may rely on the definition and use various localization morphisms of sites $\BC^{\op}/x \to \BC^{\op}$, $x \in \Ob(\C)$, where $\C^{\op}/x$ is the category over $x$ with induced topology.

For example, let $\C$ be the skeletal subcategory of $\FI$ consisting of objects $[n]$, $n \in \N$, and put the atomic topology on $\C^{\op}$. It is easy to see that $P([n]) = R\C([n], -) \in \Sh(\BC^{\op}, R)$ for any $n \in \N$. However, for $n \geqslant 1$, $P([n])$ as a presheaf is of finite type by Proposition \ref{finite type}, but as a sheaf it is not of finite type in the sense of \cite[Section 18.23]{Stack}, since the rank of $P([n])_m = R \C([n],[m])$ as a free $R$-module strictly increases when $m$ does.
\end{remark}

\section{Applications to combinatorial categories}

In the previous section we have shown the equivalence between $\Sh(\BC^{\op}, R)$ and $\C \Mod / \C \Mod^{\tor}$. Since this quotient category is still mysterious for practical purposes, in this section we give more transparent descriptions for some special types of combinatorial categories $\C$. Throughout this section we assume that $\C$ satisfies the left Ore condition and equip $\C^{\op}$ with the atomic topology.

\subsection{Categories with enough maximal objects} \label{type i cats}

In this subsection we consider small skeletal categories $\C$ satisfying the following conditions (called \textit{Type I combinatorial categories}):
\begin{itemize}
\item $\C$ is \textit{directed}: the relation that $x \preccurlyeq y$ if and only if $\C(x, y) \neq \emptyset$ is a partial ordering on the set of objects in $\C$;
\item $\C$ \textit{has enough maximal objects}: for every object $x$ in $\C$, there is another object $y$ such that $x \preccurlyeq y$ and $y$ is maximal with respect to $\preccurlyeq$.
\item if $y$ is maximal with respect to $\preccurlyeq$, then $\C(y, y)$ is a group.
\end{itemize}

Examples of Type I combinatorial categories include: skeletal subcategories of the orbit category of a finite group $G$ consisting of cosets $G/H$ (where $H$ is a subgroup) and $G$-equivariant maps, skeletal subcategories of the category of finite sets and surjections (with the singleton set as the unique maximal element), the poset of open sets with respect to inclusions of a given topological space $X$ (with $X$ as the unique maximal element).

Now let $S$ be the set of maximal objects in $\C$. For $x \in S$, we denote $\C(x, x)$ by $G_x$. The main result of this subsection is:

\begin{theorem}
Suppose that $\C$ is a Type I combinatorial category. Then one has the following equivalence:
\[
\Sh(\BC^{\op}, R) \simeq \prod_{x \in S} RG_x \Mod,
\]
where $\prod_{x \in S} RG_x \Mod$ is the product category of $RG_x \Mod$ indexed by $x \in S$. Explicitly, objects in $\prod_{x \in S} RG_x \Mod$ are families $(V_x)_{x \in S}$ such that $V_x$ is an $RG_x$-module.
\end{theorem}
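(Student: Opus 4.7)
The plan is to first show that $\C$ decomposes as a disjoint union indexed by its maximal objects, so that the theorem reduces to the case of a unique maximal object, which is then settled by an explicit adjoint pair. The key preliminary observation is that under the left Ore condition combined with the Type I hypothesis, every object $y \in \C$ maps to exactly one maximal object: if $f\colon y \to x_1$ and $g\colon y \to x_2$ with $x_1, x_2 \in S$, left Ore produces $\beta_i\colon x_i \to z$ with $\beta_1 f = \beta_2 g$, and the maximality of each $x_i$ forces $z = x_1 = x_2$. Combined with the existence of at least one such morphism, every object lies in a unique $\C_x := \{y : \C(y,x) \neq \emptyset\}$, and any morphism $y \to y'$ keeps $y, y'$ in the same component. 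Hence $\C \Mod \cong \prod_{x \in S} \C_x \Mod$, and since torsion and saturation are detected componentwise, Theorem 3.10 gives $\Sh(\BC^{\op}, R) \cong \prod_{x \in S} \Sh(\BC_x^{\op}, R)$. It therefore suffices to treat the case where $\C$ has a unique maximal object $x_0$.

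In that case, I would define $\Phi\colon \Sh(\BC^{\op}, R) \to RG_{x_0}\Mod$ by evaluation, $\Phi(V) := V_{x_0}$, and a candidate quasi-inverse $\Psi\colon RG_{x_0}\Mod \to \C\Mod$ by
\[
\Psi(W)_y := \Hom_{RG_{x_0}}(R\C(y, x_0), W),
\]
where $G_{x_0}$ acts on $\C(y,x_0)$ by left composition, and the structure map along $\alpha\colon y \to y'$ is induced by the precomposition map $R\C(y',x_0) \to R\C(y, x_0)$, $q \mapsto q\alpha$. The identifications $\Psi(W)_{x_0} \cong \Hom_{RG_{x_0}}(RG_{x_0}, W) \cong W$ give $\Phi \circ \Psi \cong \mathrm{id}$ directly.

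The substantive step is to verify both that $\Psi(W)$ is a sheaf and that the unit $\eta_V\colon V \to \Psi(\Phi(V))$, defined on $v \in V_y$ by $v \mapsto (f \mapsto V_f(v))$, is an isomorphism for every sheaf $V$. For the former, I would apply the sheaf criterion of Lemma 2.5: given $f\colon x \to y$ in $\C$ and a matching element $v = (w_p)_{p\colon y \to x_0} \in \Psi(W)_y$, the left Ore condition together with maximality of $x_0$ produces, for each $q\colon x \to x_0$, elements $\gamma \in G_{x_0}$ and $\beta\colon y \to x_0$ with $\gamma q = \beta f$, and the amalgamation is defined by $q \mapsto \gamma^{-1} \cdot w_\beta$. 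For $\eta_V$, injectivity is torsion-freeness of sheaves (Lemma 3.4 together with Theorem 3.7), and surjectivity follows by applying the sheaf condition to any chosen $f_0\colon y \to x_0$, and then observing that every morphism $y \to x_0$ has the form $\gamma f_0$ with $\gamma \in G_{x_0}$ (again by left Ore and maximality of $x_0$). The main obstacle is verifying that $u_q := \gamma^{-1}\cdot w_\beta$ is independent of the choice of $(\gamma, \beta)$; this well-definedness is exactly where the matching condition on $v$ enters, via the identity $V_g(v) = V_h(v)$ applied to the appropriate $g, h\colon y \to x_0$ that compare two such choices.
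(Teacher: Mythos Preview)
Your argument is correct, but it takes a different route from the paper. The paper does \emph{not} decompose $\C$ along its maximal objects; instead it works with the full subcategory $\D \subseteq \C$ on all maximal objects at once, observes that $\prod_{x\in S} RG_x\Mod \cong \D\Mod$, and then invokes the Serre-quotient description $\Sh(\BC^{\op},R)\cong \C\Mod/\C\Mod^{\tor}$ already established. The restriction functor $\textsl{res}\colon \C\Mod \to \D\Mod$ has your $\Psi$ (coinduction) as right adjoint, and $\textsl{res}\circ\textsl{coind}\cong\mathrm{id}$; Gabriel's \cite[Proposition III.2.5]{Gab} then yields $\C\Mod/\ker(\textsl{res})\cong\D\Mod$, and a short lemma identifies $\ker(\textsl{res})=\C\Mod^{\tor}$. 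So the paper never checks the sheaf axiom on $\Psi(W)$ or the unit $\eta_V$ directly; that work is absorbed into the earlier identification of sheaves with saturated modules and into Gabriel's abstract machinery.

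Your approach, by contrast, is more self-contained: the component decomposition plus the hands-on verification via Lemma~\ref{simple criterion} avoids any appeal to Gabriel's localization results, at the cost of the elementwise computations you outline (well-definedness of $u_q$, equivariance, surjectivity of $\eta_V$). These checks are indeed routine and go through exactly as you sketch; in particular, the key point that every $q\colon x\to x_0$ factors as $\gamma^{-1}\beta f$ via left Ore plus maximality is what makes both well-definedness and the unit isomorphism work. Your component reduction is correct but not strictly needed: the same $\Psi$ with $\D$ in place of the single object $x_0$ handles all components simultaneously, which is effectively what the paper does.
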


\begin{proof}
Since we have shown that $\Sh(\BC^{\op}, R)$ is equivalent to $\C \Mod/ \C \Mod^{\tor}$, and it is also easy to see that $\prod_{x \in S} RG_x \Mod \cong \D \Mod$, where $\D$ is the full subcategory of $\C$ whose objects are precisely elements in $S$, one only needs to show that $\C \Mod/ \C \Mod^{\tor} \simeq \D \Mod$. The proof of this conclusion is based on \cite[Proposition III.2.5]{Gab}.

The inclusion functor $\D \to \C$ induces a restriction functor $\textsl{res}: \C \Mod \to \D \Mod$, which is exact. It is well known that $\textsl{res}$ has a right adjoint functor $\textsl{coind}: \D \Mod \to \C \Mod$, called the \textit{coinduction} functor, which is a special case of the right Kan extension. Explicitly, given a $\D$-module $V$, $\textsl{coind}(V)$ is defined as follows: for every object $x$ in $\C$, one has
\[
(\textsl{coind}(V))_x = \Hom_{\D \Mod} (R\C(x, -), V).
\]
Furthermore, it is direct to check that $\textsl{res} \circ \textsl{coind}: \D \Mod \to \D \Mod$ is isomorphic to the identity functor on $\D \Mod$, using the fact that objects in $\D$ are maximal objects in $\C$ with respect to $\preccurlyeq$. Therefore, by \cite[Proposition III.2.5]{Gab}, we obtain the following equivalence:
\[
\C \Mod / \mathrm{ker} (\textsl{res}) \simeq \D \Mod,
\]
where $\mathrm{ker} (\textsl{res})$ is the full subcategory of $\C \Mod$ consisting of $\C$-modules $V$ such that $\textsl{res} (V) = 0$. It remains to show that $\mathrm{ker} (\textsl{res}) = \C \Mod^{\tor}$, which is done in the next lemma.
\end{proof}

\begin{lemma}
Notation as above. Then $\mathrm{ker} (\textsl{res}) = \C \Mod^{\tor}$.
\end{lemma}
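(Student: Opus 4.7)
The plan is to prove both inclusions directly, using the two defining features of a Type I combinatorial category: every maximal object $x \in S$ has $\C(x,x) = G_x$ a group, and every object admits a morphism to some maximal object.

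First I would show $\C \Mod^{\tor} \subseteq \mathrm{ker}(\textsl{res})$. Let $V$ be torsion and fix $x \in S$. For any $v \in V_x$ there is a morphism $f: x \to z$ in $\C$ with $f \cdot v = 0$. The existence of $f$ gives $x \preccurlyeq z$; since $\preccurlyeq$ is a partial order (this is the directedness axiom) and $x$ is maximal, we must have $z = x$. Thus $f \in G_x$ is invertible, so $v = f^{-1} \cdot (f \cdot v) = 0$. This forces $V_x = 0$ for every $x \in S$, i.e.\ $\textsl{res}(V) = 0$.

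For the reverse inclusion $\mathrm{ker}(\textsl{res}) \subseteq \C \Mod^{\tor}$, let $V \in \mathrm{ker}(\textsl{res})$ and pick any object $y$ in $\C$ together with $v \in V_y$. By the \emph{enough maximal objects} axiom there is a maximal object $x \in S$ with $y \preccurlyeq x$, so we can choose a morphism $f: y \to x$. Then $f \cdot v \in V_x = 0$, which exhibits $v$ as a torsion element. Since $y$ and $v$ were arbitrary, $V$ is torsion.

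Both directions are short and of essentially the same shape, so I do not expect any serious obstacle; the only subtlety is the use of antisymmetry of $\preccurlyeq$ in the first inclusion to conclude $z = x$ (rather than just $z$ being isomorphic to $x$), which is where skeletality of $\C$ combined with directedness is used implicitly. With this lemma in hand, the preceding proof then completes the identification $\C \Mod / \C \Mod^{\tor} \cong \D \Mod \cong \prod_{x \in S} R G_x \Mod$.
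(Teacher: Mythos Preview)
Your proof is correct and follows essentially the same approach as the paper's own argument: both inclusions are established directly, using maximality together with the group property of $\C(x,x)$ for one direction and the existence of a morphism to a maximal object for the other. The only cosmetic difference is that the paper treats the inclusions in the opposite order.
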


\begin{proof}
Clearly, $\mathrm{ker} (\textsl{res})$ consists of $V \in \C \Mod$ such that $V_x = 0$ for all objects $x$ in $\D$. But every such $\C$-module $V$ is torsion. Indeed, for an arbitrary object $y$ in $\C$ and any $v \in V_y$, by the combinatorial conditions imposed on $\C$, we can find a maximal object $x$ in $\C$ and a morphism $\alpha: y \to x$ such that $\alpha \cdot v \in V_x$. But $x$ is also an object in $\D$, so $V_x = 0$, and consequently $v$ is a torsion element in $V$. This shows that $V$ is torsion, and hence $\mathrm{ker}(\textsl{res}) \subseteq \C \Mod^{\tor}$.

Conversely, if $V$ is a torsion $\C$-module, then $V_x = 0$ for all objects $x$ in $\D$. Otherwise, assume that there exists an object $x$ in $\D$ with $V_x \neq 0$. But since $V$ is torsion, for an arbitrary $0 \neq v \in V_x$, we can find a morphism $\alpha: x \to y$ such that $\alpha \cdot v = 0$. We must have $x= y$ and $\alpha$ is an isomorphism as $x$ is maximal. Consequently, $\alpha \cdot v = 0$ implies that $v = 0$, contradicting the assumption that $v \neq 0$. This shows that $\C \Mod^{\tor} \subseteq \mathrm{ker} (\textsl{res})$.
\end{proof}

\begin{remark} \normalfont
The previous equivalence may be obtained as a special case of the Comparison Lemma, see \cite[C2.2.3]{Jo}. A statement of the same fashion, for all finite EI categories with dense topology, may be found in \cite{WX}.
\end{remark}

\begin{example} \label{FS}
Let $\C$ be the following category: objects are $[n] = \{1, 2, \ldots, n\}$, $n \geqslant 1$, and morphisms are surjections. It is easy to see that $\C$ is a skeletal category of the category of finite sets and surjections, and it has a terminal object $[1]$. Moreover, $\C$ satisfies the left Ore condition. Therefore, by the previous theorem, we deduce that $\Sh(\BC^{\op}, R) \cong R \Mod$.
\end{example}

\subsection{The Nakayama functor} \label{type ii cats}

Recall that a skeletally small category $\C$ is called an \textit{EI category} if every endomorphism in $\C$ is an isomorphism. Examples of EI categories include free categories of quivers without oriented cycles, posets, groups, and some important categories constructed from groups such as transporter categories, orbit categories, Frobenius categories; see for instance \cite{Luck, Webb}.

In this subsection we assume that $\C$ is a skeletally small category satisfying the following conditions (called \textit{Type II combinatorial categories}):
\begin{itemize}
\item $\C$ is a small skeletal EI category;
\item every morphism in $\C$ is a monomorphism;
\item $\C$ is \textit{locally finite}, that is, $|\C(x, y)| < \infty$ for all objects $x$ and $y$;
\item $\C$ is \textit{inwards finite}, that is, for every object $x$, the set $\{y \in \C \mid \C(y, x) \neq \emptyset \}$ is finite.
\end{itemize}

Now let $R$ be a field. We also impose the following conditions on $\C$-modules:
\begin{itemize}
\item (LN): $\C$ is \textit{locally Noetherian} over $R$, that is, every finitely generated $\C$-module is Noetherian;
\item (LS): $\C$ is \textit{locally self-injective}, that is, every projective $\C$-module is also injective;\footnote{This notion is introduced in \cite{GLX}. We remind the reader that we do not require injective $\C$-modules to be projective.}
\item (NV): for every finitely generated $\C$-module $V$ which is not torsion, there exists a representable $\C$-module $P(x) = R\C(x, -)$ such that $\Hom_{\C \Mod} (V, P(x)) \neq 0$.
\end{itemize}

\begin{remark} \normalfont
At a first glance, the conditions imposed on the category $\C$ and $\C$-modules seem very strong and artificial. However, it is very natural for the practice purpose. Indeed, skeletal categories of $\FI$, $\OI$, and $\VI$ and their modules over a field of characteristic 0 all satisfy these requirements; see \cite{GLX} and \cite{GL1}. We also mention that the third condition (NV) imposed on $\C$-modules is slightly different from \cite[Statement (2) of Proposition 3.10]{GLX}. This is because for skeletal categories of $\FI$ and $\VI$, finitely generated torsion modules are precisely finite dimensional modules, but for skeletal categories of $\OI$, there exist finitely generated torsion modules with infinite dimension.
\end{remark}

For a Type II combinatorial category $\C$, We define the \textit{Nakayama functor} as follows:
\[
\nu: \C \module \to \C \fdmod, \quad V \mapsto D (\Hom_{\C \module} (V, \bigoplus_{x \in \Ob(\C)} P(x)))
\]
where $D= \Hom_{R} (-, R)$ is the classical duality functor, $\C \module$ and $\C \fdmod$ are the categories of finitely generated $\C$-modules and finite dimensional $\C$-modules respectively. We also define the \textit{inverse Nakayama functor} to be
\[
\nu^{-1}: \C \fdmod \to \C \module, \quad W \mapsto \Hom_{\C^{\op} \module} (DW, R\C(-, x)).
\]

The following facts have been established in \cite{GLX}: $\nu$ and $\nu^{-1}$ are indeed functors (see \cite[Subsection 2.2]{GLX}); the inwards finite condition guarantees that $\nu V$ is finite dimensional (see \cite[Lemma 2.1]{GLX}); the locally Noetherian condition (LN) guarantees that $\nu^{-1} W$ is finitely generated (see \cite[Lemma 2.2]{GLX}); the locally self-injective condition (LS) implies that $\nu$ is exact (see \cite[Lemma 2.11]{GLX}). Furthermore, $(\nu, \nu^{-1})$ is an adjoint pair (see \cite[Lemma 2.4]{GLX}), and $\nu \circ \nu^{-1}$ is isomorphic to the identity functor on $\C \fdmod$ (see \cite[Proposition 3.3]{GLX}). Besides, for every $x \in \Ob(\C)$, $P(x)$ is torsion free since every morphism is a monomorphism, so $\ker (\nu)$ contains all finitely generated torsion modules. On the other hand, by the third condition (NV) on $\C$-modules, $\ker (\nu)$ only contains torsion modules. Therefore, we have the following result:

\begin{theorem} \cite[Theorem 3.6]{GLX}
Suppose that $\C$ is a Type II combinatorial category, $\C \Mod$ satisfies conditions (LN), (LS), and (LV), and let $R$ be a field. Then the Nakayama functor induces the following equivalence
\[
\xymatrix{
\C \module / \C \module^{\tor} \ar[r]^-{\simeq} & \C \fdmod
}
\]
where $\C \module^{\tor}$ is the full subcategory of $\C \module$ of finitely generated torsion modules.
\end{theorem}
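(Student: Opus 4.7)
The plan is to apply a Gabriel-style quotient theorem, in direct analogy with the proof of Theorem \ref{equivalences}: by \cite[Proposition III.2.5]{Gab}, if $F: \mathcal{A} \to \mathcal{B}$ is an exact functor between abelian categories admitting a fully faithful right adjoint $G$ (equivalently, $F \circ G \cong \mathrm{id}_{\mathcal{B}}$), then $F$ factors through an equivalence $\mathcal{A}/\ker(F) \xrightarrow{\sim} \mathcal{B}$. I would take $F = \nu$ and $G = \nu^{-1}$. Once the abstract hypotheses are in place, the theorem reduces to identifying $\ker(\nu)$ with $\C\module^{\tor}$.

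All three abstract hypotheses are essentially already recorded in the discussion preceding the theorem, so the task is to assemble them. Exactness of $\nu$ comes from (LS): each $P(x)$ is then injective, so $\Hom_{\C\module}(-,\bigoplus_{x} P(x))$ is exact, and the classical duality $D$ preserves exactness on the (finite-dimensional, by inwards-finiteness) vector spaces it receives. The adjunction $(\nu,\nu^{-1})$ together with the natural isomorphism $\nu\circ\nu^{-1} \cong \mathrm{id}_{\C\fdmod}$ follows from the defining formulas and the involutivity $D\circ D \cong \mathrm{id}$ on finite-dimensional spaces, by a Yoneda-type calculation that uses (LN) to guarantee $\nu^{-1}$ lands in $\C\module$ rather than merely in $\C\Mod$.

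It remains to show $\ker(\nu) = \C\module^{\tor}$. For the inclusion $\C\module^{\tor} \subseteq \ker(\nu)$, observe that each representable $P(x) = R\C(x,-)$ is torsion free: since every morphism in $\C$ is a monomorphism, postcomposition $R\C(x,y) \to R\C(x,z)$ by any $f: y \to z$ is injective. By Lemma \ref{torsion pair}, $\Hom_{\C\module}(V, P(x)) = 0$ for every torsion $V$, hence $\nu V = 0$. The reverse inclusion $\ker(\nu) \subseteq \C\module^{\tor}$ is precisely condition (NV): if $V$ is not torsion, some $\Hom_{\C\module}(V, P(x))$ is nonzero, whence $\nu V \neq 0$. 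The main obstacle is the bookkeeping in the previous paragraph: verifying $\nu\circ\nu^{-1} \cong \mathrm{id}$ and that $\nu^{-1}$ lands in $\C\module$ requires carefully commuting the duality past the infinite direct sum $\bigoplus_{x} P(x)$ and controlling the hom-sets against it, which is where inwards-finiteness and (LN) do the essential work — everything else is formal once the Gabriel criterion is invoked.
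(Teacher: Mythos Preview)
Your proposal is correct and matches the paper's approach essentially line for line: the paper records in the paragraph immediately preceding the theorem that (LS) gives exactness of $\nu$, that $(\nu,\nu^{-1})$ is an adjoint pair with $\nu\circ\nu^{-1}\cong\mathrm{id}$, and that $\ker(\nu)=\C\module^{\tor}$ via torsion-freeness of each $P(x)$ together with (NV), then defers to \cite{GLX} for the details. Your invocation of \cite[Proposition III.2.5]{Gab} to conclude is exactly the same abstract quotient criterion the paper uses elsewhere (Theorem 4.1), so the argument is both correct and faithful to the paper's strategy.
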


More explicitly, we have the following commutative diagram where $\bar{\nu}$ and $\bar{\nu}^{-1}$ give the equivalence $\C \module / \C \module^{\tor} \simeq \C \fdmod$, and $\nu \circ \textsl{inc}$ and $\nu^{-1}$ give the equivalence between the category $\C \module^{\sat}$ of finitely generated saturated $\C$-modules and $\C \fdmod$:
\[
\xymatrix{
& & \C \module^{\tor} \ar[d]^{\textsl{inc}}\\
\C \module^{\sat} \ar[rr]^{\textsl{inc}} & & \C \module \ar[rr]<.5ex>^{\nu} \ar[dd]<.5ex>^{\textsl{loc}} & & \C \fdmod \ar[ll]<.5ex>^{\nu^{-1}} \ar@{-->}[lldd]<.5ex>^{\bar{\nu}^{-1}}\\
 \\
& & \C \module / \C \module^{\tor} \ar[uu]<.5ex>^{\textsl{sec}} \ar@{-->}[rruu]<.5ex>^{\bar{\nu}}
}
\]
For more details, please refer to \cite{GLX}.

\begin{remark}
When $R$ is a field of characteristic 0 and $\C$ is a skeleton of $\FI$, the above equivalence was established by Sam and Snowden in \cite[Theorem 3.2.1]{SS} in the language of twisted commutative algebras. Nagpal asked in \cite{Nag1} whether the same conclusion holds for a skeleton of $\VI$, and this was answered affirmatively in \cite{GLX}. The above equivalence also holds for a skeleton of $\OI$ (see \cite[Theorem 1.2]{GL1}) and a skeleton of $\FI^m$, the product category of $\FI$ (see \cite[Theorem 1.2]{Zen}).
\end{remark}

\begin{remark}
If a Type II combinatorial category $\C$ is subcanonical with respect to the atomic topology, then each representable functor $P(x)=R\C(x,-)$ gives a sheaf of modules on $\C^{\op}$. Thus $\{P(x)\}_{x\in\Ob\C}$ is a set of generators of $\Sh(\BC^{\op},R)$. According to \cite[Page 92]{Po}, it makes sense to talk about ``$\{P(x)\}_x$-finitely generated'' objects in $\Sh(\BC^{\op},R)$. For obvious reasons, we shall simply call them ``\textit{finitely generated sheaves}''. Clearly, a sheaf is finitely generated if and only if it is a presheaf of finite type, see Proposition \ref{finite type}.
\end{remark}

Recall that $\mathrm{sh}(\BC^{\op}, R)$ is the (abelian) full subcategory of $\Sh(\BC^{\op},R)$ consisting of finitely generated sheaves.

\begin{corollary} \label{finitely generated sheaves}
Suppose that $\C$ is a Type II combinatorial category, $\C \Mod$ satisfies conditions (LN), (LS), and (LV), and let $R$ be a field. Then:
\begin{enumerate}
\item the following functors are equivalences of categories:
\[
\xymatrix{
\mathrm{sh}(\BC^{\op}, R) \ar[r]^-{=} & \C \module^{\sat} \ar[r]^-{\textsl{loc} \circ \textsl{inc}} & \C \module / \C \module^{\tor} \ar[r]^-{\bar{\nu}} & \C \fdmod;
}
\]

\item every object in $\mathrm{sh}(\BC^{\op}, R)$ is of finite length, and has finite injective dimension;

\item every indecomposable injective object in $\mathrm{sh}(\BC^{\op}, R)$ is isomorphic to an indecomposable direct summand of $P(x)$ for a certain $x \in \Ob(\C)$;

\item the functor $\nu$ induces a bijection between the set of isomorphism classes of simple objects in $\C \module^{\sat}$ and the set of isomorphism classes of simple $R\C(x, x)$-modules, where $x$ ranges over all objects in $\C$.
\end{enumerate}
\end{corollary}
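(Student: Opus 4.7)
The plan is to leverage the equivalence $\Sh(\BC^{\op},R)\cong \C\Mod^{\sat}\cong \C\Mod/\C\Mod^{\tor}$ from Theorem \ref{equivalences} and the Nakayama equivalence $\C \module/\C \module^{\tor}\cong \C \fdmod$ established in Subsection \ref{type ii cats}, and then to read off the remaining consequences by transport of structure across these identifications.

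For the triple equivalence in (1), the first and last equivalences are obtained by restricting the general results to finitely generated objects. The condition (LN) guarantees that $\C \module^{\tor}$ is a Serre subcategory of the abelian category $\C \module$, so the Serre quotient $\C \module /\C \module^{\tor}$ makes sense; the Nakayama equivalence gives the third equivalence in (1). To identify this quotient with $\mathrm{sh}(\BC^{\op},R)$, I would verify that the explicit model for $\textsl{sec}\circ\textsl{loc}\,V$ described after Proposition \ref{sheafification} preserves finite generation when applied to $V\in \C\module$: under (LS) each $P(x)$ is injective, so a finitely generated torsion free module admits an injective hull that can be chosen inside $\C \module$; the pullback construction then remains inside $\C \module$ by (LN). This shows that $\textsl{sec}\circ\textsl{loc}$ restricts to a functor $\C \module\to \C \module^{\sat}$ whose essential image is all of $\C \module^{\sat}$, and passing through $\nu$ gives $\mathrm{sh}(\BC^{\op},R)$.

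Part (3) then follows because both $\textsl{sec}\circ\textsl{loc}$ and $\nu^{-1}\circ\nu$ are left adjoint to the inclusion $\C \module^{\sat}\hookrightarrow \C \module$ (using (1) and the fact that $\nu^{-1}$ takes values in saturated modules), so they are naturally isomorphic by uniqueness of adjoints. Parts (2), (4), and (5) are then obtained by transporting across $\mathrm{sh}(\BC^{\op},R)\cong \C \fdmod$. Objects of $\C \fdmod$ are finite dimensional and hence of finite length, proving the length claim in (2); the finite injective dimension claim follows because under (LS) the representables are injective, and any object in $\C \module^{\sat}$ admits a finite injective coresolution by finite direct sums of $P(x)$'s, where finiteness of the coresolution is guaranteed by an induction whose termination uses the combinatorial finiteness hypotheses on $\C$. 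For (4), Corollary \ref{injective sheaves} identifies indecomposable injectives of $\mathrm{sh}(\BC^{\op},R)$ with finitely generated torsion free injective $\C$-modules; under (LS) any such object embeds into a finite direct sum of $P(x)$'s, and Krull--Schmidt forces each indecomposable to appear as a summand of some $P(x)$. For (5), a simple module in $\C \fdmod$ must be concentrated on a single isomorphism class of objects (by the EI property together with the fact that every morphism is a monomorphism, so a simple module cannot support a nonzero image under a non-invertible structure map), and hence arises from a simple $R\C(x,x)$-module; pulling back along $\nu$ yields the required bijection.

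The main obstacle is the preservation of finite generation under $\textsl{sec}\circ\textsl{loc}$ (equivalently, under sheafification restricted to $\C \module$). General Serre localization need not preserve finite generation, so the combinatorial assumptions (inwards finiteness, local finiteness, together with (LN) and (LS)) must be used in an essential way here; once this point is established, all remaining statements follow by routine transport across equivalences already built in the excerpt.
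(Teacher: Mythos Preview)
Your overall strategy matches the paper's, and your treatment of (3) via uniqueness of left adjoints is a clean alternative to the paper's argument, which instead reads $\textsl{sec}\circ\textsl{loc}\cong \nu^{-1}\circ\nu$ directly off the commutative diagram in Subsection~\ref{type ii cats} by writing $\textsl{sec}\circ\textsl{loc}\cong(\nu^{-1}\circ\bar\nu)\circ(\bar\nu^{-1}\circ\nu)$. Your identification of the ``main obstacle'' (that sheafification preserves finite generation) is also apt; the paper handles it implicitly, since $\nu^{-1}$ lands in $\C\module$ and so $\textsl{sec}$ on the finitely generated quotient is realized by $\nu^{-1}\circ\bar\nu$.

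There is, however, a genuine gap in your direct arguments for (2) and (4). You assert that under (LS) a finitely generated torsion free injective $\C$-module, or more generally any object of $\C\module^{\sat}$, embeds into a finite direct sum of representables $P(x)$. But (LS) only says that projectives are injective; it says nothing about torsion free injectives being projective, nor about finite sums of $P(x)$'s cogenerating $\C\module^{\sat}$. Condition (NV) supplies a single nonzero map $V\to P(x)$, and iterating (using injectivity of the $P(x)$ to extend maps off successive kernels) produces a strictly descending chain of submodules; Noetherianity (LN) does not force such a chain to terminate, so your ``induction whose termination uses the combinatorial finiteness hypotheses'' is not justified as stated. The paper avoids this entirely by routing through $\C\fdmod$: for (4), if $I$ is indecomposable injective in $\C\module^{\sat}$ then $\nu I$ is indecomposable injective in $\C\fdmod$, hence a summand of some $\nu P(x)$ (these being the duals of the finite dimensional representable $\C^{\op}$-modules $R\C(-,x)$), and applying $\nu^{-1}$ together with $(\nu^{-1}\circ\nu)P(x)\cong P(x)$ exhibits $I$ as a summand of $P(x)$; for (2), finite length and finite injective dimension are read off in $\C\fdmod$ and transported back. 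The statement you are trying to use directly (finitely generated torsion free injectives are projective) is in fact a \emph{consequence} of the Nakayama equivalence, not an independent input available from (LS) alone.
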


Note that $\nu^{-1} \circ \nu$ and $\textsl{sec} \circ \textsl{loc}$ are functors from $\C \module$ to itself. However, since the images of these functors always lie in $\C \module^{\sat}$, by abuse of notations, we also regards them as functors from $\C \module$ to $\C \module^{\sat}$.

\begin{proof}
Statement (1) is follows from the finitely generated version of Theorem \ref{equivalences} restricted to $\C \module$ (which is abelian by the local Noetherianity of $\C$ over $R$) as well as the previous theorem.

Statement (2) is also clear since objects in $\C \fdmod$ satisfy these properties.

For Statement (3), we observe the following facts: $P(x)$ is a torsion free injective $\C$-module, and hence is contained in $\C \module^{\sat}$; $\nu P(x)$ is injective in $\C \fdmod$; and every indecomposable injective object in $\C \fdmod$ is isomorphic to a direct summand of $\nu P(x)$ for a certain object $x$ in $\C$. Now let $I$ be an indecomposable object in $\C \module^{\sat}$ (identified with $\mathrm{sh}(\BC^{\op}, R)$), then $\nu I$ is isomorphic to a direct summand of $\nu P(x)$ for a certain object $x$. Now applying the functor $\nu^{-1}$ we deduce that $I \cong (\nu^{-1} \circ \nu) I$ is isomorphic to a direct summand of $(\nu^{-1} \circ \nu)P(x) \cong P(x)$.

For Statement (4), we note that every simple $R\C(x, x)$-module can be viewed as a simple $\C$-module, and every simple $\C$-module is of this form. For details, please refer to \cite[Corollary (4.2)]{Webb} or \cite{Luck}.
\end{proof}

\begin{remark} \normalfont
The first conclusion in this corollary can be used to construct simple objects in $\mathrm{sh}(\BC^{\op}, R)$. Indeed, since for many examples it is easy to classify all simple objects in $\C \fdmod$, by applying the inverse Nakayama functor one obtains all simple objects in $\mathrm{sh}(\BC^{\op}, R)$.
\end{remark}

Now we give a few examples such that Corollary \ref{finitely generated sheaves} can apply.

\begin{example}
When $R$ is a field of characteristic 0, the skeletal subcategory of the category $\FI$ (resp., $\VI$) consisting of objects $[n]$ (resp., $\mathbb{F}_q^n$), $n \in \mathbb{N}$, satisfies all conditions specified in Corollary \ref{finitely generated sheaves}; see \cite{GLX}. More generally, the skeletal subcategory of $\FI^m$ (the product category of $m$ copies of $\FI$) consisting of objects $[n_1] \times \ldots \times [n_m]$, $n_i \in \mathbb{N}$, satisfies these conditions; see \cite{Zen}. When $R$ is a field, the skeletal subcategory of the category $\OI$ consisting of objects $[n]$, $n \in \mathbb{N}$, also satisfies these requirements; see \cite{GL1}.
\end{example}

A few other interesting examples of Type II combinatorial categories are introduced by Pol and Strickland in \cite{Pol}. Let $\mathcal{G}$ be the category of finite groups and conjugacy classes of surjective group homomorphisms. More explicitly, for two surjective homomorphism $\alpha: G \to H$ and $\beta: G \to H$, we say that $\alpha$ and $\beta$ lie in the same conjugacy class if there exists an element $h \in H$ such that $\alpha (g) = h\beta(g)h^{-1}$ for $g \in G$. A full subcategory $\mathcal{U}$ of $\mathcal{G}$ is called a \textit{multiplicative global family} if it is closed under taking finite products, quotient groups, and subgroups.

\begin{example}
Let $p$ be a prime number and $n$ a natural number. Then each of the following full subcategories of $\mathcal{G}$ is a multiplicative global family:
\begin{itemize}
\item $\mathcal{Z}$: the category of finite abelian groups;
\item $\mathcal{Z}[p^{\infty}]$: the category of finite abelian $p$-groups;
\item $\mathcal{G}[p^n]$: the category of finite groups of exponent dividing $p^n$;
\item $\mathcal{Z}[p^n]$: the category of finite abelian groups of exponent dividing $p^n$;
\item $\mathcal{Z}[p]$: the category of elementary abelian $p$-groups.
\end{itemize}
\end{example}

It is easy to see that the opposite categories of the categories listed in the above example are Type II combinatorial categories. Furthermore, Pol and Strickland proved the following result:

\begin{proposition} \cite[Propositions 12.16 and 15.1]{Pol}
Let $\mathcal{U}$ be a multiplicative global family, and let $R$ be a field of characteristic 0. Then torsion free injective objects in $\mathcal{U}^{\op} \Mod$ coincide with projective objects in it, and every torsion free object can be embedded into a projective one.
\end{proposition}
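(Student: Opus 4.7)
The plan is to derive the two assertions from two key facts: (i) each representable $P(G) = R\mathcal{U}(-, G)$ is torsion-free injective, and (ii) every torsion-free module embeds into a direct product of representables. Granting (i), the ``projective implies torsion-free injective'' direction of the first claim is immediate. The converse follows from (ii) by a splitting argument: a torsion-free injective $I$ embeds into a projective, the embedding splits by injectivity of $I$, so $I$ is a direct summand of a projective and hence projective.

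First I would verify that $\mathcal{U}^{\op}$ is a Type II combinatorial category in the sense of Subsection \ref{type ii cats}. The EI property holds because a surjective endomorphism of a finite group is automatically an automorphism; every morphism of $\mathcal{U}^{\op}$ is a monomorphism because in $\mathcal{U}$ morphisms are conjugacy classes of surjections and precomposition with a surjection is faithful; local and inwards finiteness follow from finiteness of subgroups, quotients, and hom-sets between finite groups. These checks also give that each $P(G)$ is torsion-free, since precomposing a surjection with another surjection remains surjective. One further needs that the category algebra is locally Noetherian (which can be established by an argument paralleling the $\FI$ case, using closure of $\mathcal{U}$ under products and quotients), so that direct sums of injectives remain injective in $\mathcal{U}^{\op}\Mod$.

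For the injectivity of each $P(G)$, I would invoke the Nakayama-type machinery of Subsection \ref{type ii cats}: construct $\nu(V) = D\Hom(V, \bigoplus_H P(H))$ together with its partial inverse on the finitely generated module category, use the closure of $\mathcal{U}$ under finite products and subgroups to check exactness of $\nu$, and apply Maschke's theorem (the endomorphism ring of $P(G)$ is $R\,\mathrm{Out}(G)$, a semisimple group algebra since $\mathrm{char}\, R = 0$ and $\mathrm{Out}(G)$ is finite) to conclude that projectives correspond to injectives under the induced equivalence on the Serre quotient by torsion. For the embedding claim, given a nonzero $v \in V(G)$ with $V$ torsion-free, I would construct a morphism $V \to P(H)$ not killing $v$ by passing to the cyclic submodule generated by $v$, reducing to a finite-dimensional subquotient via inwards finiteness, and transporting a nonzero map back to $V$ through the Nakayama duality; taking the product over all such pairs $(G, v)$ then yields the desired embedding.

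The main obstacle will be establishing injectivity of the representables in the absence of an explicit shift functor of the sort available for $\FI$ or $\VI$. One must rely on the abstract multiplicative closure of $\mathcal{U}$ to manufacture enough morphisms between representables for the Nakayama argument to function, and this is where the three closure properties (products, subgroups, and quotients) are all essential—precisely the point at which the arguments of the cited propositions of Pol are most delicate.
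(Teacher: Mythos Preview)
The paper does not prove this proposition at all: it is quoted verbatim from \cite[Propositions 12.16 and 15.1]{Pol} and used as a black box. So there is no ``paper's own proof'' to compare against; any argument you supply is necessarily going beyond what the paper does.

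That said, your proposed argument has a genuine circularity. You plan to establish the injectivity of each representable $P(G)$ by invoking the Nakayama machinery of Subsection~\ref{type ii cats}. But that machinery, as set up in the paper, takes condition (LS) --- local self-injectivity, i.e.\ that every projective $\C$-module is injective --- as a \emph{hypothesis}, and uses it to conclude that the Nakayama functor $\nu$ is exact. The statement you are trying to prove is precisely that projectives are (torsion-free) injectives. So you cannot use the exactness of $\nu$ to deduce injectivity of the $P(G)$; that would be assuming the conclusion.

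There is a second gap. You write that local Noetherianity ``can be established by an argument paralleling the $\FI$ case, using closure of $\mathcal{U}$ under products and quotients.'' The paper itself explicitly says, just after the corollary following this proposition, that it is \emph{unknown} whether $\mathcal{U}^{\op}$ is locally Noetherian in general over a field of characteristic $0$; only the special cases $\mathcal{Z}[p^{\infty}]$ and $\mathcal{Z}[p^n]$ are known. So you cannot simply assert (LN), and in any case your use of it (to pass from direct products to direct sums of injectives) is not obviously the right reduction. The actual arguments in \cite{Pol} proceed rather differently, exploiting the explicit combinatorics of multiplicative global families to build injective resolutions by hand, rather than bootstrapping from an abstract Nakayama equivalence.
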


From this result we may deduce:

\begin{corollary}
Let $\mathcal{U}$ be a multiplicative global family, and let $R$ be a field of characteristic 0. Then the conditions (LS) and (NV) hold. Moreover, $\mathcal{U}$ satisfies the right Ore condition, so one can impose the atomic topology on it.
\end{corollary}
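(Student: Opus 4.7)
The plan is to verify the three assertions---right Ore for $\mathcal{U}$, condition (LS), and condition (NV)---one at a time, using the Pol--Strickland proposition as the main input for (LS) and (NV), and a direct pullback construction for the Ore condition.

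First I would verify the right Ore condition for $\mathcal{U}$. Given two morphisms with common codomain $G$, represented by surjective group homomorphisms $f : H \to G$ and $f' : H' \to G$, I would form the group-theoretic pullback $K = \{(h,h') \in H \times H' : f(h) = f'(h')\}$, which is a subgroup of $H \times H'$. Since $\mathcal{U}$ is closed under finite products and subgroups, $K$ lies in $\mathcal{U}$. The projections $g : K \to H$ and $g' : K \to H'$ are surjective because $f$ and $f'$ are, and $f \circ g = f' \circ g'$ at the level of representatives, hence also at the level of conjugacy classes. A different choice of representatives gives a conjugate---and therefore canonically isomorphic---pullback diagram, so the conclusion is independent of the choices.

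Next I would establish (LS). The key preliminary observation is that every representable $P(x) = R\mathcal{U}^{\op}(x,-)$ is torsion free, which reduces to showing that each morphism in $\mathcal{U}^{\op}$ is a monomorphism, equivalently, that each morphism in $\mathcal{U}$ is an epimorphism. So suppose $[\alpha \circ f] = [\beta \circ f]$ for a surjection $f : H \to G$ and two conjugacy classes $[\alpha], [\beta] : G \to G'$; then some $c \in G'$ satisfies $\alpha(f(h)) = c\,\beta(f(h))\,c^{-1}$ for every $h \in H$, and surjectivity of $f$ upgrades this to $\alpha(g) = c\,\beta(g)\,c^{-1}$ for every $g \in G$, giving $[\alpha] = [\beta]$. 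Projective $\mathcal{U}^{\op}$-modules are summands of direct sums of representables and hence torsion free; the Pol--Strickland proposition then identifies them with torsion free injective modules, which is exactly (LS).

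Finally, for (NV), let $V \in \mathcal{U}^{\op}\module$ be non-torsion, so that $V/V_T$ is a nonzero torsion free finitely generated $\mathcal{U}^{\op}$-module. By Pol--Strickland, $V/V_T$ embeds into a projective module $P$, and $P$ is a direct summand of a free module $F = \bigoplus_{i} P(x_i)$. Picking a nonzero element in the image of $V/V_T$ inside $F$ and using that it must have a nonzero component under some projection $\pi_j : F \to P(x_j)$, the composite
\[
V \twoheadrightarrow V/V_T \hookrightarrow F \xrightarrow{\pi_j} P(x_j)
\]
is a nonzero element of $\Hom_{\mathcal{U}^{\op}\Mod}(V, P(x_j))$, which establishes (NV) with $x = x_j$. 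The bulk of the difficulty is absorbed into the Pol--Strickland input; the only genuinely new step is the Ore verification, and the sole potential obstacle there is bookkeeping around conjugacy classes, which is handled by the naturality of group-theoretic pullbacks and the closure properties built into the definition of a multiplicative global family.
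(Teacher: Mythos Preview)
Your proposal is correct and follows essentially the same approach as the paper: the pullback construction for the right Ore condition is identical (the paper's version has a notational typo, writing $G \times_K H$ where $G \times_H K$ is meant, but the idea is the same), and for (LS) and (NV) you unpack what the paper simply declares ``immediately follows'' from the Pol--Strickland proposition. Your verification that morphisms in $\mathcal{U}^{\op}$ are monomorphisms is not actually needed for (LS)---the Pol--Strickland statement already asserts that projectives coincide with torsion free injectives, so projectives are injective without further argument---but the extra detail does no harm and is relevant elsewhere for checking the Type~II conditions.
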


\begin{proof}
The first statement immediately follows from the previous proposition. Now we check that $\mathcal{U}$ satisfies the right Ore condition. Let $[f]: G \to H$ and $[f']: K \to H$ be two morphisms, which are represented by two surjective group homomorphisms $f$ and $f'$ respectively. Then the product $G \times H$ is contained in $\mathcal{U}$ since it is closed under products, so is the fiber product $G \times_K H$ as $\mathcal{U}$ is closed under taking subgroups. One can check that $G \times_K H$ is also the pull-back of $[f]$ and $[f']$ in $\mathcal{U}$. Thus $\mathcal{U}$ has pull-backs, and hence satisfies the right Ore condition.
\end{proof}

Unfortunately, it is still unknown whether $\mathcal{U}^{\op}$ has the locally Noetherian property (LN) over a field $R$ of characteristic 0. Pol and Strickland show that if $\C$ is the opposite category of $\mathcal{Z}[p^{\infty}]$ or $\mathcal{Z}[p^n]$, then $\C$ has this property; see \cite[Theorem 13.15]{Pol}.

In a summary, one has:

\begin{proposition} \label{equivalence of Z}
Suppose that $R$ is a field of characteristic 0, and let $\C$ be a skeletal category of the following categories:
\[
\FI, \, \VI, \, \OI, \, \FI^m, \, \mathcal{Z}[p^{\infty}]^{\op}, \, \mathcal{Z}[p^n]^{\op}.
\]
Then one has
\[
\mathrm{sh}(\BC^{\op}, R) \simeq \C \fdmod
\]
\end{proposition}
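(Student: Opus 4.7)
The plan is to apply Corollary \ref{finitely generated sheaves}(1) to each of the six listed categories. That corollary already produces the equivalence $\mathrm{sh}(\BC^{\op}, R) \cong \C \fdmod$ as soon as $\C$ is a Type II combinatorial category (in the sense of Subsection \ref{type ii cats}) and $\C \Mod$ satisfies the three conditions (LN), (LS), and (NV). Thus the entire proof reduces to a case-by-case verification of these hypotheses, and all the substantive work has been packaged in the preceding examples and references.

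First I would dispose of the four cases $\FI$, $\VI$, $\OI$, and $\FI^m$ by quoting the examples immediately preceding the statement: in characteristic zero, $\FI$ and $\VI$ are covered by \cite{GLX}; $\OI$ over any field is covered by \cite{GL1}; and $\FI^m$ in characteristic zero is covered by \cite{Zen}. In each of these references the Type II structural conditions (skeletally small EI, every morphism monic, local finiteness, inwards finiteness) are verified, and the three homological conditions (LN), (LS), (NV) are established. Hence Corollary \ref{finitely generated sheaves}(1) applies to each of these four categories verbatim.

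For the two remaining categories $\mathcal{Z}[p^{\infty}]^{\op}$ and $\mathcal{Z}[p^n]^{\op}$ over a field of characteristic zero, the Type II conditions are essentially immediate: endomorphism monoids of finite groups in $\mathcal{G}$ are groups, since any surjective self-homomorphism of a finite group is an automorphism; hom sets between two finite groups consist of finitely many conjugacy classes of surjections and so are finite; morphisms in the opposite category are monic because surjections are epimorphisms in $\mathcal{G}$; and inwards finiteness in the opposite category reduces to each finite abelian $p$-group admitting, up to isomorphism, only finitely many quotient groups. Condition (LN) for these two categories is \cite[Theorem 13.15]{Pol}, while (LS) and (NV) follow from the Pol--Strickland proposition quoted just before the statement (torsion free injectives coincide with projectives, and every torsion free module embeds into a projective) together with the fact that every representable $P(x)$ is torsion free since morphisms are monic. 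The main subtle point I would flag is the (NV) check for $\OI$: because finitely generated torsion $\OI$-modules can be infinite dimensional, one cannot reduce (NV) to the stronger statement that every non-torsion module is finite dimensional as in the $\FI$/$\VI$ setting, and must instead use (NV) as formulated in Subsection \ref{type ii cats}. This is exactly the formulation chosen in the paper, and the verification for $\OI$ is carried out in \cite{GL1}.
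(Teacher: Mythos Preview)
Your proposal is correct and follows essentially the same approach as the paper: both proofs consist of applying Corollary \ref{finitely generated sheaves}, with the hypotheses having been verified in the examples and results immediately preceding the proposition (Examples for $\FI$, $\VI$, $\OI$, $\FI^m$; the Pol--Strickland proposition and its corollary together with \cite[Theorem 13.15]{Pol} for $\mathcal{Z}[p^{\infty}]^{\op}$ and $\mathcal{Z}[p^n]^{\op}$). The paper's proof is simply the one-line ``Apply Corollary \ref{finitely generated sheaves}'', leaving the case-by-case verification implicit in the surrounding text, whereas you have spelled it out explicitly.
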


\begin{proof}
Apply Corollary \ref{finitely generated sheaves}.
\end{proof}

\begin{remark} \normalfont
The requirement that the field $R$ has characteristic 0 can be dropped for $\OI$. For $\mathcal{Z}[p^{\infty}]^{\op}$, $\mathcal{Z}[p^n]^{\op}$, one only requires that the characteristic of $R$ is distinct from $p$.
\end{remark}

\subsection{Local cohomology}

Theorem \ref{sheaf cohomology} relates sheaf cohomology functors to derived functors of the torsion functor $\tau$. In this subsection we give another interpretation of sheaf cohomology for some special categories in terms of local cohomology. For this purpose, we introduce the following definitions.

Let $\C$ be a small skeletal EI category. The \textit{category algebra} $R\C$ is a free $R$-module with basis elements all morphisms in $\C$, and multiplication is given by the following rule: for two morphisms $\alpha$ and $\beta$, we define $\alpha \cdot \beta = \alpha \circ \beta$ if they can be composed, and 0 otherwise. Note that $R\C$ is an associative algebra, and it has a multiplicative unit if and only if $\C$ has only finitely many objects.

Now impose the following conditions on $\C$ (called \textit{Type III combinatorial categories}):
\begin{itemize}
\item $\C$ is a small skeletal EI category;
\item objects in $\C$ are parameterized by $\N$ (that is, $\Ob(\C) = \{x_n \mid n \in \N \}$);
\item $\C(x_m, x_n) \neq \emptyset$ if and only if $m \leqslant n$;
\item the group $G_n = \C(x_n, x_n)$ acts transitively on $\C(x_m, x_n)$ form the left hand side.
\end{itemize}

Examples of Type III combinatorial categories include skeletal subcategories of $\FI$ and $\VI$, see \cite{GL1, GLX}.

Now let let $\C$ be a Type III combinatorial category, and let $\mathfrak{m}$ be the free $R$-module spanned by all non-invertible morphisms in $\C$. It is easy to check that $\mathfrak{m}$ is actually a two-sided ideal of $R\C$. We use this two-sided ideal to define the local cohomology of $\C$-modules, as the second author and Ramos did for the category $\FI$ in \cite{LR}.

Note that there is a natural fully faithful functor from $\C \Mod$ to $R\C \Mod$, so we may regard a $\C$-module as an $R\C$-module. But the converse is not true. Instead, an $R\C$-module $V$ is a $\C$-module if and only if one has the following decomposition:
\[
V = \bigoplus_{x \in \Ob(\C)} 1_xV
\]
For details, please refer to \cite{Luck, Mit, Webb}. Since $R\C$ as a free $R$-module is spanned by all morphisms in $\C$, every element in $R\C$ is a finite $R$-linear combination of morphisms. Consequently, it is the direct sum of free $R$-modules $R\C(-, x)$ spanned by all morphisms ending at $x$ with $x$ ranging over all objects. But $R\C(-, x)$ is precisely $1_x R\C$, so we have
\[
R\C = \bigoplus_{x \in \Ob(\C)} 1_xR\C,
\]
and hence $R\C$ is a $\C$-module. Similarly, for every positive $n$, $\mathfrak{m}^n$ is also a $\C$-module as it is spanned by morphisms which can be written as a composite of $n$ non-invertible morphisms. Consequently, the quotient $R\C / \mathfrak{m}^n$ is a $\C$-module, and has the following decomposition
\[
R\C / \mathfrak{m}^n = \bigoplus_{x \in \Ob(\C)} P(x)/\mathfrak{m}^n P(x)
\]
as a $\C$-module where $P(x) = R\C(x, -)$.

For $n \geqslant 1$, we define the functor
\[
\fHom_{\C \Mod} (R\C/\mathfrak{m}^n, \bullet): \C \Mod \to \C \Mod, \quad V \mapsto \bigoplus_{x \in \Ob(\C)} \Hom_{\C \Mod} (P(x)/\mathfrak{m}^n P(x), V).
\]
Note that this is well defined since $\bigoplus_{x \in \C} (P(x)/\mathfrak{m}^n P(x), V)$ carries the structure of $\C$-module, and in general
\[
\Hom_{\C \Mod}(R\C / \mathfrak{m}^n, V) \ncong \fHom_{\C \Mod} (R\C/\mathfrak{m}^n, V)
\]
as the former one might not be a $\C$-module (although it is an $R\C$-module). For details, please refer to \cite[Definition 5.4 and Remark 5.5]{LR}, where the construction still holds in this more general setting.

The functor $\fHom_{\C \Mod} (R\C/\mathfrak{m}^n, \bullet)$ is left exact, and hence has right derived functors
\[
\fExt_{\C \Mod}^i (R\C/\mathfrak{m}^n, \bullet) = \bigoplus_{x \in \C} \Ext_{\C \Mod}^i (P(x)/\mathfrak{m}^n P(x), \bullet).
\]
Finally, we define the local cohomology functors as
\[
\HH^i_{\mathfrak{m}} (\bullet) = \varinjlim_n \fExt_{\C \Mod}^i (R\C/\mathfrak{m}^n, \bullet).
\]

\begin{theorem} \label{local cohomology}
Suppose that $\C$ is a Type III combinatorial category, and let $V \in \Sh(\BC^{\op}, R)$. Then for every object $x$ in $\C$ and $i \geqslant 1$, one has
\[
\mathrm{R}^i \Gamma_x(V) \cong (\HH_{\mathfrak{m}}^{i+1}(V))_x,
\]
the value of the $\C$-module $\HH_{\mathfrak{m}}^{i+1}(V)$ on the object $x$.
\end{theorem}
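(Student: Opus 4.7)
The plan is to identify $\mathrm{R}^{i+1}\tau(V)$ with $\HH^{i+1}_{\mathfrak{m}}(V)$ as $\C$-modules, after which the desired conclusion is immediate from Theorem~\ref{sheaf cohomology}. The crux is a natural isomorphism $\tau \cong \HH^0_{\mathfrak{m}}$ of functors on $\C\Mod$; once this is in hand, the derived versions follow formally from exactness of direct sums and filtered colimits.

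For step one I claim that, for a Type~III combinatorial category and any $V \in \C\Mod$, an element $v \in V_{x_k}$ is torsion if and only if some power $\mathfrak{m}^n$ annihilates $v$. The direction $(\Leftarrow)$ is easy: by the Type~III axioms a composition $x_k \to x_{k+1} \to \cdots \to x_{k+n}$ of non-invertible morphisms exists and lies in $\mathfrak{m}^n$, so it kills $v$ and $v$ is torsion. For $(\Rightarrow)$, suppose $\alpha \cdot v = 0$ for some $\alpha : x_k \to x_m$, and set $n = m - k$. For any $l \geqslant m$, pick $\gamma : x_m \to x_l$ (possible since $\C(x_m, x_l) \neq \emptyset$); then $\gamma\alpha \cdot v = 0$. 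By transitivity of the left $G_l$-action on $\C(x_k, x_l)$, every morphism $\beta : x_k \to x_l$ equals $g \cdot (\gamma\alpha)$ for some $g \in G_l$, so $\beta \cdot v = g \cdot 0 = 0$. A generating element of $\mathfrak{m}^n$ with domain $x_k$ is a product of $n$ non-invertible morphisms starting at $x_k$; its codomain is some $x_l$ with $l \geqslant k + n = m$, so by what was just shown it kills $v$. Thus $\mathfrak{m}^n \cdot v = 0$. Because $\mathfrak{m}$ is a two-sided ideal of $R\C$, the submodules obtained this way assemble into a $\C$-submodule of $V$, and the identification $\tau(V) = \HH^0_{\mathfrak{m}}(V)$ is natural in $V$.

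For step two I pass to derived functors. Since direct sums commute with cohomology of complexes, the $i$-th right derived functor of $\fHom_{\C\Mod}(R\C/\mathfrak{m}^n, -) = \bigoplus_{x \in \Ob(\C)} \Hom_{\C\Mod}(P(x)/\mathfrak{m}^n P(x), -)$ equals $\fExt_{\C\Mod}^i(R\C/\mathfrak{m}^n, -)$. Filtered colimits are exact in $\C\Mod$, so they commute with the formation of right derived functors, yielding
\[
\mathrm{R}^i(\HH^0_{\mathfrak{m}}) \cong \varinjlim_n \fExt_{\C\Mod}^i(R\C/\mathfrak{m}^n, -) = \HH^i_{\mathfrak{m}}.
\]
Combined with $\tau \cong \HH^0_{\mathfrak{m}}$ from step one this gives $\mathrm{R}^i\tau \cong \HH^i_{\mathfrak{m}}$, and substituting into Theorem~\ref{sheaf cohomology} produces $\mathrm{R}^i\Gamma_x(V) \cong (\mathrm{R}^{i+1}\tau(V))_x \cong (\HH^{i+1}_{\mathfrak{m}}(V))_x$ for $V \in \Sh(\BC^{\op}, R)$ and $i \geqslant 1$, as desired.

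The main obstacle is the $(\Rightarrow)$ direction of step one, where both Type~III conditions are used essentially: transitivity of the $G_l$-action converts annihilation by a single morphism into annihilation by every morphism with the same target, and the non-emptiness $\C(x_m, x_l) \neq \emptyset$ for $l \geqslant m$ propagates this property forward to all sufficiently large targets. Without such homogeneity a torsion element might be killed by some but not all morphisms out of $x_k$, and no power of $\mathfrak{m}$ would annihilate it.
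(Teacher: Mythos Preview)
Your proof is correct and follows essentially the same route as the paper: both reduce via Theorem~\ref{sheaf cohomology} to proving $\tau \cong \HH^0_{\mathfrak{m}}$ as functors, and both establish this by showing that $v \in V_{x_k}$ is torsion if and only if $\mathfrak{m}^n \cdot v = 0$ for some $n$, using transitivity of the $G_l$-action together with non-emptiness of $\C(x_m, x_l)$ for $l \geqslant m$ exactly as you do. Your step two spells out the passage to derived functors in slightly more detail than the paper, but the content is the same.
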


\begin{proof}
We show that
\[
\mathrm{R}^i \tau (\bullet) \cong \HH_{\mathfrak{m}}^i(\bullet)
\]
for $i \geqslant 1$ as functors on $\C \Mod$, where $\tau$ is the torsion functor. Then the conclusion follows from Theorem \ref{sheaf cohomology}. Since $\mathrm{R}^i \tau (\bullet)$ and $\HH_{\mathfrak{m}}^i (\bullet)$ are the $i$-th derived functors of $\tau$ and $\HH_{\mathfrak{m}}^0(\bullet)$ respectively, we only need to show
\[
\HH_{\mathfrak{m}}^0 (\bullet) = \varinjlim_n \fHom_{\C \Mod} (R\C/\mathfrak{m}^n, \bullet) \cong \tau(\bullet).
\]
That is, for every $W \in \C \Mod$, $\HH_{\mathfrak{m}}^0 (W) = \tau W = W_T$, or equivalently, $(\HH_{\mathfrak{m}}^0(W))_x = (W_T)_x$ for every object $x$ in $\C$.

It is easy to see that $\Hom_{\C \Mod} (P(x)/\mathfrak{m}^n P(x), W)$ consists of elements $w$ (the images of the module homomorphism determined by $1_x \mapsto w$) in $W_x$ such that $f \cdot w = 0$ for all $f \in \mathfrak{m}^n$. Therefore,
\[
\Hom_{\C \Mod} (P(x)/\mathfrak{m}^n P(x), W) \subseteq (W_T)_x,\]
and hence
\[
(\HH_{\mathfrak{m}}^0(W))_x = \varinjlim_n \Hom_{\C \Mod} (P(x)/\mathfrak{m}^n P(x), W) = \bigcup_{n \geqslant 1} \Hom_{\C \Mod} (P(x)/\mathfrak{m}^n P(x), W) \subseteq (W_T)_x.
\]
On the other hand, if $w \in W_x$ is a torsion element, then there exists a morphism $f: x \to y$ such that $f \cdot w = 0$. Clearly, $y \neq x$ since otherwise $f$ is an isomorphism in $\C$. Thus $f \in \mathfrak{m}$. Since objects in $\C$ are parameterized by $\N$, we can find $m, n \in \N$ such that $x = x_m$ and $y = x_n$ with $m < n$. Let $s = n -m$. We claim that $\mathfrak{m}^s \cdot v = 0$. If this holds, then
\[
w \in \Hom_{\C \Mod} (P(x)/\mathfrak{m}^s P(x), W) \subseteq \varinjlim_n \Hom_{\C \Mod} (P(x)/\mathfrak{m}^n P(x), W) = (\HH_{\mathfrak{m}}^0(W))_x,
\]
and the conclusion of this theorem is established.

Now we prove the claim. Let $g: x_a \to x_b$ be a morphism contained in $\mathfrak{m}^s$. If $a \neq m$, then $g \cdot w = 0$ trivially holds, so we can assume that $a = m$. Since $g \in \mathfrak{m}^s$, it can be written as a composite of $s$ non-invertible morphisms. Consequently, $b \geqslant a + s = n$, so we can find a morphism $h: y = x_n \to x_b$. Note that $hf \in \C(x_m, x_b)$. Since $\C(x_b, x_b)$ acts transitively on $\C(x_m, x_b)$, we can also find an automorphism $\delta: x_b \to x_b$ satisfying $\delta hf = g$. Therefore, $g \cdot w = (\delta h f) \cdot w = 0$ as well. This finishes the proof of the claim.
\end{proof}

We have been reinterpreting various  sheaf-theoretic constructions by representation theory. We take the opportunity to state one example, which goes in the opposite direction.

\begin{remark}
Let $\C$ be the skeletal subcategory of $\FI$ consisting of objects $[n]$, $n \in \N$. Then by direct calculation we have an isomorphism of categories $[i]\backslash \C \cong \C$, $\forall i$, where $[i]\backslash\C$ is the category under $[i]$ and $([i]\backslash\C)^{\op}\cong\C^{\op}/[i]$. The localization morphism \cite[Definition 18.19.1]{Stack} at $[i]$, $j_{[i]}=(j_{[i]}^*,{j_{[i]}}_*)$, contains the following restriction along $\C^{\op}/{[i]}\to\C^{\op}$
$$
j^*_{[i]}: \C\Mod=\PSh(\C^{\op},R) \to \PSh(\C^{\op}/[i], R)=\C\Mod,
$$
which is precisely the (exact) \textit{shift functor} $\Sigma^i$ introduced in \cite{CEF}, satisfying $\Sigma P([n]) \cong P([n]) \oplus P([n-1])^n$. The shift functors $\Sigma^i$ play an important role in the representation theory of FI.
\end{remark}

\section{Discrete representations of topological groups} \label{discrete repns section}

In this section we extend the classical result of Artin from sheaves of sets to sheaves of modules, serving as a bridge connecting sheaves of modules over atomic sites and discrete representations of topological groups. Throughout this section let $G$ be a topological group, and let $X$ be a set equipped with the discrete topology. Note that $\Aut(X)$ is a topological group with a topology inherited from the product topology on $X^X$. Let $\rho: G \to \Aut(X)$ be a group homomorphism. We say that the action of $G$ on $X$ induced by $\rho$ is \textit{discrete} if $\rho$ is continuous with respect to the topologies of $G$ and $\Aut(X)$, and call $X$ a \textit{discrete $G$-set}.\footnote{In \cite[III.9]{MM} the action is called continuous, and $X$ is called a continuous $G$-set.} It is easy to see that $\rho$ is continuous if and if only for every $x \in X$, the stablizer subgroup $\Stab_G(x)$ is open.

\subsection{Equivalences between representation categories and sheaf categories} \label{Artin's theorem}

Following \cite[III.9]{MM}, let $\mathbf{B}G$ be the category of discrete $G$-sets, and morphisms between two $G$-sets are $G$-equivariant maps (which are automatically continuous since $G$-sets are equipped with the discrete topology). Let $\mathbf{S}G$ be the full subcategory of $\mathbf{B}G$ consisting of objects of the form $G/H$ where $H$ is an open subgroup (thus the quotient topology on $G/H$ is discrete). Morphisms in $\mathbf{S}G$ are induced by elements in $G$. Explicitly, every morphism from $G/H$ to $G/K$ is of the form $\alpha_g: xH \mapsto xg^{-1}K$ such that $gHg^{-1} \leqslant K$. For a cofinal system of open subgroups $\mathcal{U}$ (that is, every open subgroup contains a member in $\mathcal{U}$), let $\mathbf{S}_{\mathcal{U}}G$ be the full subcategory of $\mathbf{S}G$ consisting of objects of the form $G/H$ with $H \in \mathcal{U}$.  In the literature $\mathbf{S}G$ and $\mathbf{S}_{\mathcal{U}}G$ are called \textit{orbit categories} of $G$ with respect to open subgroups and $\mathcal{U}$ respectively.

\begin{theorem} \cite[Theorems 1 and 2]{MM} \label{mm}
Let $G$ be a topological group and $\mathcal{U}$ be a cofinal system of open subgroups of $G$. Then one has the following equivalences of categories
\[
\Sh(\mathbf{S}G) \simeq \mathbf{B}G \simeq \Sh(\mathbf{S}_{\mathcal{U}}G),
\]
where all sheaves are taken with respect to the atomic topology.
\end{theorem}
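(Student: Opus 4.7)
The plan is to construct explicit mutually quasi-inverse functors between $\mathbf{B}G$ and $\Sh(\mathbf{S}G)$, and then deduce the second equivalence by a cofinality argument. Define $\Phi : \mathbf{B}G \to \PSh(\mathbf{S}G)$ by sending a discrete $G$-set $X$ to the presheaf $\Phi(X)(G/H) = X^H$, the set of $H$-fixed points; a morphism $\alpha_g : G/H \to G/K$ with $gHg^{-1} \leqslant K$ induces, contravariantly, the map $X^K \to X^H$, $x \mapsto g^{-1}x$, which is well defined precisely because of the conjugation condition. In the opposite direction, define $\Psi : \Sh(\mathbf{S}G) \to \mathbf{B}G$ by $\Psi(F) = \varinjlim_H F(G/H)$, where the colimit is indexed by open subgroups ordered by reverse inclusion; the $G$-action translates $F(G/H)$ to $F(G/(gHg^{-1}))$ via the isomorphism induced by $\alpha_g$, and any element of $\Psi(F)$ represented at level $H$ has stabilizer containing $H$, hence open, so $\Psi(F)$ is a genuine discrete $G$-set.

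The core of the argument then consists of three checks. First, $\Phi(X)$ is an atomic sheaf: by Lemma \ref{simple criterion}, it suffices to verify that for every $f : G/K \to G/H$ and every $v \in X^K$ such that $v \cdot g = v \cdot h$ whenever $fg = fh$, there exists a unique $u \in X^H$ with $u \cdot f = v$. This reduces, after unwinding the $G$-action, to the statement that an element fixed by enough translates to satisfy the cancellation condition is actually $H$-fixed, which follows from the transitive action of $G$ on the fibres of the orbit projection $G \to G/H$. Second, one constructs natural isomorphisms $X \xrightarrow{\sim} \Psi\Phi(X)$ and $\Phi\Psi(F) \xrightarrow{\sim} F$. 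The first is the standard identity $X = \varinjlim_H X^H$ valid for any discrete $G$-set, since each $x$ lies in $X^{\Stab_G(x)}$ and its stabilizer is open by discreteness of the action. The second says $F(G/H) \cong (\varinjlim_K F(G/K))^H$; injectivity is the separatedness of $F$, while surjectivity lifts an element fixed in the colimit back to $F(G/H)$ via a matching-family amalgamation.

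Finally, to obtain $\Sh(\mathbf{S}_{\mathcal{U}}G) \cong \mathbf{B}G$ from $\Sh(\mathbf{S}G) \cong \mathbf{B}G$, I would invoke a Comparison Lemma for sites: the inclusion $i : \mathbf{S}_{\mathcal{U}}G \hookrightarrow \mathbf{S}G$ is fully faithful, and every object $G/K$ of $\mathbf{S}G$ admits a covering morphism $G/H \to G/K$ with $H \in \mathcal{U}$ (take any $H \in \mathcal{U}$ contained in $K$, which exists by cofinality), so $\mathbf{S}_{\mathcal{U}}G$ is dense in $\mathbf{S}G$ with respect to the atomic topology. The restriction $i^* : \Sh(\mathbf{S}G) \to \Sh(\mathbf{S}_{\mathcal{U}}G)$ is then an equivalence. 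The principal obstacle I anticipate is the second natural isomorphism $\Phi\Psi(F) \cong F$: converting the fixed-point condition in the colimit back to the value of $F$ at $G/H$ demands the sheaf axiom in a genuinely nontrivial way, and care must be taken with how covering sieves in the atomic topology on $\mathbf{S}G$ correspond to families of morphisms coming from smaller open subgroups, so that the matching family defining the desired amalgamation is actually a matching family for a covering sieve.
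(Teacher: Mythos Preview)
The paper does not actually prove this theorem: it is stated with a citation to Mac Lane--Moerdijk \cite[III.9, Theorems 1 and 2]{MM}, and the paper only records the two functors $\phi(X)(G/H)=X^H$ and $\psi(F)=\varinjlim_H F(G/H)$ without verifying that they are quasi-inverse or that $\phi(X)$ is a sheaf. Your proposal uses exactly these functors and supplies the missing verifications (sheaf condition via Lemma~\ref{simple criterion}, the two natural isomorphisms, and the Comparison Lemma for passing to $\mathbf{S}_{\mathcal{U}}G$), which is precisely the standard argument in \cite{MM}; so your approach is correct and agrees with what the paper invokes, only with more detail than the paper itself provides.
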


The first equivalence is given by two functors
\begin{align*}
\phi: \mathbf{B}G \to \Sh(\mathbf{S}G),\\
\psi: \Sh(\mathbf{S}G) \to \mathbf{B}G.
\end{align*}
Explicitly, given $X \in \mathbf{B}G$ and an open subgroup $H$, one lets
\[
\phi(X)(H) = X^H = \{ x \in X \mid h \cdot x = x, \, \forall h \in H \}.
\]
Conversely, given $F \in \Sh(\mathbf{S}G)$, one sets
\[
\psi(F) = \varinjlim_{H} F(H)
\]
where the colimit is taken over the set of all open subgroups ordered by inclusion. The second equivalence can be obtained similarly.

Now let $R$ be a commutative ring. Since a sheaf of $R$-modules is considered as a contravariant functor into $R \Mod$ meeting the existence and uniqueness condition of amalgamations, the preceding equivalences in Theorem \ref{mm} automatically induce the following equivalences:
\[
RG \DMod \simeq \Sh(\mathbf{S}G, R) \simeq \Sh(\mathbf{S}_{\mathcal{U}}G, R)
\]
where all sheaves are taken over the atomic topology, and $RG \DMod$ is the category of all \textit{discrete representations} $V$ of $G$ (or discrete $RG$-modules); that is, $V$ is an $RG$-module as well as a discrete $G$-set, or equivalently, for every $v \in V$, the stabilizer subgroup $\Stab_G(v)$ is open. By this criterion, submodules and quotient modules of discrete modules are also discrete. Furthermore, the $R$-linearization of a discrete $G$-set is a discrete representation.

\begin{lemma} \label{discrete sets and modules}
Let $X$ be a discrete $G$-set and let $R$ be a commutative ring. Then the free $R$-module $\underline{X}$ (with discrete topology) spanned by $X$ is a discrete representation of $G$.
\end{lemma}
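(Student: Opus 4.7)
The plan is to use the criterion stated just before the lemma: a discrete representation is exactly an $RG$-module $V$ such that the stabilizer $\Stab_G(v)$ is an open subgroup of $G$ for every $v \in V$. So I reduce the claim to checking this criterion for the free $R$-module $\underline{X}$.

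The first step is to fix an arbitrary element $v \in \underline{X}$ and write it as a finite $R$-linear combination $v = \sum_{i=1}^n r_i x_i$, where $x_1, \ldots, x_n$ are distinct elements of $X$ (using that $X$ is an $R$-basis of $\underline{X}$). Since $X$ is a discrete $G$-set, each stabilizer $\Stab_G(x_i)$ is an open subgroup of $G$. Therefore the intersection
\[
H = \bigcap_{i=1}^n \Stab_G(x_i)
\]
is a finite intersection of open subgroups, hence itself an open subgroup of $G$.

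The second step is to observe that $H$ acts trivially on every $x_i$ and therefore fixes $v$, so $H \subseteq \Stab_G(v)$. Since $\Stab_G(v)$ contains the open subgroup $H$, it is a union of cosets of $H$ and is itself open in $G$. As $v$ was arbitrary, the stabilizer of every element of $\underline{X}$ is open, which verifies the criterion and establishes that $\underline{X}$ is a discrete representation of $G$.

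I do not expect any real obstacle: the statement is essentially a direct consequence of the bookkeeping that openness of subgroups is preserved by finite intersections and by passing to a larger subgroup. The only point to be slightly careful about is the appeal to distinctness of the $x_i$ when writing $v$ as a finite combination, so that the stabilizers of the individual basis elements are well defined and their intersection indeed fixes $v$; this is automatic from the freeness of $\underline{X}$ on $X$.
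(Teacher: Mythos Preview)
Your proof is correct and follows essentially the same approach as the paper: both write $v$ as a finite $R$-linear combination of basis elements $x_i$, observe that the finite intersection $\bigcap_i \Stab_G(x_i)$ is an open subgroup contained in $\Stab_G(v)$, and conclude that $\Stab_G(v)$ is open. Your version is in fact slightly more careful in insisting that the $x_i$ be distinct.
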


\begin{proof}
The induced action is defined as follows: for $v = r_1 x_1 + \ldots + r_n x_n \in \underline{X}$ with $r_i \in R$ and $x_i \in X$, and $g \in G$, one sets $g \cdot v = r_1 (g \cdot x_1) + \ldots r_n (g \cdot x_n)$. Note that $\underline{X}$ is a discrete representation if and only if $\Stab_G(v)$ is an open subgroup of $G$. However,
\[
\Stab_G(x_i) \cap \ldots \cap \Stab_G(x_n) \leqslant \Stab_G(v),
\]
and each $\Stab_G(x_i)$ is open. Therefore, $\Stab_G(v)$ is also open since it contains an open subgroup.
\end{proof}

\begin{remark} \normalfont
It is easy to see that $\Stab_G(v)$ acts on $\{x_i \mid i \in [n] \}$, and $\Stab_G(x_i) \cap \ldots \cap \Stab_G(x_n)$ is a normal subgroup of $\Stab_G(v)$. Moreover, the quotient group acts on this finite set faithfully, so it is isomorphic to a subgroup of $S_n$. Consequently, $\Stab_G(x)$ is a finite disjoint union of open sets, each of which is homeomorphic to $\Stab_G(x_i) \cap \ldots \cap \Stab_G(x_n)$.
\end{remark}

\begin{remark} \normalfont
The regular $RG$-module $RG$ is discrete if and only if $G$ is equipped with discrete topology. Indeed, since $\Stab_G(1_R1_G) = 1_G$, $RG$ is a discrete representation only if the trivial subgroup is open, if and only if $G$ is equipped with the discrete topology.
\end{remark}

For the convenience of the reader, let us restate several results, whose application will be illustrated by a few examples described in later subsections.

\begin{theorem} \label{combination}
Let $G$ be a topological group, $\mathcal{U}$ be a cofinal system of open subgroups of $G$, $R$ a commutative ring, and $\C = (\mathbf{S}_{\mathcal{U}}G)^{\op}$. Then one has:
\[
RG \DMod \simeq \Sh(\BC^{\op}, R) \simeq \C \Mod / \C \Mod^{\tor}.
\]
Furthermore, if $\C^{\op}$ is a Type II combinatorial category, then one has:
\[
\mathrm{sh}(\BC^{\op}, R) \simeq \C \module / \C \module^{\tor} \simeq \C \fdmod.
\]
\end{theorem}

\subsection{Infinite symmetric group}

Representation theory of the infinite symmetric group has attracted the attention and has been extensively studied by a lot of people; see for instance \cite{BEH, Lieb, Nag2, Ol, Ok, SS, SS2}. In particular,  Liebermann proved in \cite{Lieb} that the category of unitary representations of $\Aut(\N$) is semisimple, and classified all simple unitary representations; Sam and Snowden constructed all indecomposable injective and simple discrete representations over the complex field in \cite{SS, SS2}; Nagpal even constructed all simple discrete representations over any field in \cite{Nag2}. Therefore, we do not claim originality for these results, and correspondingly, we omit their proofs. Indeed, the main goal of this subsection is not to provide new classifications. Instead, we use it as an example to illustrate our strategy (built upon Theorem \ref{combination}) of studying discrete representations of topological groups through sheaf theory and representation theory of categories.

\subsubsection{Orbit categories}
Let $G = \Aut(\N)$ be the set of all permutations on $\N$ with the topology inherited from the product topology of $\N^{\N}$, and let $\mathcal{U}$ be the set of pointwise stabilizer subgroups $U_S = \mathrm{Stab}_G(S)$ where $S$ is a finite subset of $\N$. Then $G$ is a topological group and $\mathcal{U}$ is a cofinal system of open subgroups of $G$. Furthermore, $\mathbf{S}_{\mathcal{U}}G$ is equivalent to the full subcategory of $\FI^{\op}$ consisting of finite subsets of $\N$, which is equivalent to $\FI^{\op}$. Consequently, Artin's theorem tells us
\[
\mathbf{B}\Aut(\N) \simeq \Sh(\FI^{\op})
\]
which is called the \textit{Schanule topos}. For details, see \cite[III.9]{MM} as well as \cite[Examples A2.1.11(h) and D3.4.10]{Jo}.

The group $\Aut(\N)$ has a dense subgroup $H = S_{\infty}$ consisting of permutations on $\N$ fixing all but finitely many elements in $\N$; that is, $H = \varinjlim_n S_n$, called the \textit{infinite symmetric group} over countably infinitely many elements. Equip $H$ with the topology inherited from $\Aut(\N)$. Then $\mathcal{U} = \{ \Stab_H(S) \mid S \subset \N, \, |S| < \infty\}$ is a fundamental system of open subgroups of the identity in $H$. As in \cite[III.9]{MM}, one can show that $\mathbf{S}_{\mathcal{U}}H$ is also equivalent to $\FI^{\op}$. Consequently, the categories of discrete representations of these two groups are equivalent by Artin's theorem.

\subsubsection{Simple and indecomposable objects}

In the rest of this subsection let $G = S_{\infty}$ and $\C$ be the skeletal category of $\FI$ consisting of objects $[n]$, $n \in \N$. Then we have:
\[
RG \DMod \simeq \Sh(\BC^{\op}, R) \simeq \C \Mod /\C \Mod^{\tor}.
\]
Since $\C^{\op}$ is subcanonical with respect to the atomic topology; that is, each representable functor $R\C([n], -)$ is contained in $\Sh(\BC^{\op}, R)$, it corresponds to a discrete representation $F(n)$ of $G$, which can be explicitly described as follows: as a free $R$-module it is spanned by the set of all injective maps from $[n]$ to $\N$, and the natural action of $G$ on this set uniquely determines the action of $G$ on $F(n)$; see Lemma \ref{discrete sets and modules}. Alternatively, $F(n)$ can be described as an induced module:
\[
F(n) \cong RG \otimes_{RS_{\infty}'} R,
\]
where $S_{\infty}'$ is the subgroup of $S_{\infty}$ consisting of elements fixing all members in $[n]$.

We say that a discrete $RG$-module $V$ is \textit{finitely generated} if $V$ is isomorphic to a quotient of $\bigoplus_{n \in \N} (F(n))^{c_n}$ with $\sum_{n \in \N} c_n < \infty$. Note that the kernel of the quotient map is also discrete and finitely generated by the equivalence $RG \DMod \cong \C \Mod^{\sat}$ and the locally Noetherian property of $R\C$. Thus the full subcategory of finitely generated discrete $RG$-modules is an abelian subcategory of $RG \DMod$, denoted by $RG \dmodule$. We shall warn the reader that here the meaning of finitely generated is different from its usual sense (the algebraic sense) in representation theory: for instance, the regular $RG$-module is finitely generated in the algebraic sense, but is not a discrete module, and hence is not a finitely generated discrete module

With this definition, when $R$ is a field of characteristic 0, we have the following equivalences:
\[
RG \dmodule \simeq \mathrm{sh}(\BC^{\op}, R) \simeq \C \module /\C \module^{\tor} \simeq \C \fdmod
\]
which were established by Sam and Snowden in \cite[Theorem 2.5.1]{SS} and \cite[Corollary 6.2.5]{SS2} (except the equivalence involving the sheaf category). Simple objects and indecomposable injective objects in $\Sh(\BC^{\op}, R)$ have essentially been classified in \cite[Corollaries 2.2.7 and 2.2.15]{SS} as well. However, since these results are described in the language of twisted commutative algebras, here we give a slightly different version in the language of representations of $\C$, which might be easier to understand for some readers.

By Statement (4) of Proposition \ref{finitely generated sheaves}, an indecomposable injective object in $\Sh(\BC^{\op}, R)$ is precisely an indecomposable projective $\C$-module, which have been classified in \cite[Subsection 2.2]{CEF}. Explicitly, for any $m \in \N$ and a partition $\lambda: \lambda_1 \geqslant \lambda_2 \geqslant \ldots \geqslant \lambda_r > 0$ of $[m]$, let $L_{\lambda}$ be the simple $RS_m$-module corresponding to $\lambda$. Then one can construct an indecomposable projective $\C$-module
\[
P(\lambda) \cong R\C \otimes_{RS_m} L_{\lambda} \cong P(m) \otimes_{RS_m} L_{\lambda}
\]
where $RS_m$ is viewed as a subalgebra of $R\C$ and $P(m) = R\C([m], -)$. Furthermore, $(P(\lambda))_n = 0$ for $n < m$, and for $n \geqslant m$,
\[
(P(\lambda))_n = \bigoplus_{\mu} L_{\mu}
\]
where $\mu$ is the partition of $[n]$ whose corresponding Young diagram $Y_{\mu}$ is obtained by adding one box to $n - m$ distinct columns of $Y_{\lambda}$.

The partition $\lambda$ induces a \textit{uniform partition} $\boldsymbol{\lambda}$ for all $n \geqslant m + \lambda_1$. Explicitly, $\boldsymbol{\lambda}(n)$ is the following partition
\[
n - m \geqslant \lambda_1 \geqslant \lambda_2 \geqslant \ldots \geqslant \lambda_r,
\]
of $[n]$. Denote the simple $RS_n$-module corresponding to this partition by $L_{\boldsymbol{\lambda}(n)}$. When $n < m + \lambda_1$, we let $L_{\boldsymbol{\lambda}(n)}$ be the zero module by convention. In \cite[Proposition 3.4.1]{CEF}, Church, Ellenberg, and Farb described a $\C$-module $L_{\boldsymbol{\lambda}}$ of the following structure:
\[
\xymatrix{
L_{\boldsymbol{\lambda}(0)} \ar[r] & L_{\boldsymbol{\lambda}(1)} \ar[r] & L_{\boldsymbol{\lambda}(2)} \ar[r] & \ldots
}
\]
Then a translation of \cite[Corollary 2.2.7]{SS} asserts that these modules exhaust all simple objects in $\Sh(\BC^{\op}, R)$ up to isomorphism.

Simple discrete representations of $S_{\infty}$ have been classified in \cite[Proposition 6.1.5]{SS2}. We now construct indecomposable injective discrete representations. Again, they (up to isomorphism) are one to one corresponded to partitions $\lambda$ of $[n]$, where $n$ ranges over all natural numbers. Regard $L_{\lambda}$ as a simple representation of the subgroup $H = S_n \times S_{\infty}'$, where $S_{\infty}'$ acts trivially on $L_{\lambda}$. Define $I(\lambda) = RG \otimes_{RH} L_{\lambda}$. Then these modules exhaust all indecomposable injective discrete representation of $S_{\infty}$ up to isomorphism.

\begin{remark} \normalfont
We shall point out that these constructions have been long known and heavily studied in representation theory of infinite symmetric groups; see for instances \cite{Lieb, Ol}. In particular, simple objects $L_{\boldsymbol{\lambda}}$ were observed and studied by Ol'sanskii fifty year ago (see \cite[Theorem 2.7]{Ol}), though he used them to consider unitary representations of infinite symmetric groups rather than representations of $\FI$ (\cite[Proposition 3.4.1]{CEF}) or sheaves of modules over $\FI$.
\end{remark}

As a summary, we restate the following classification result:

\begin{proposition}
Let $R$ be a field of characteristic 0. Then up to isomorphism, one has:
\begin{enumerate}
\item simple objects in $\Sh(\BC^{\op}, R)$ are those $L_{\boldsymbol{\lambda}}$;
\item indecomposable injective objects in $\Sh(\BC^{\op}, R)$ are those $P(\lambda)$;
\item indecomposable injective discrete representations of $S_{\infty}$ or $\Aut(\N)$ are those $I(\lambda)$.
\end{enumerate}
\end{proposition}

\begin{remark} \normalfont
From this proposition we know that when $R$ is a field of characteristic 0 and $G$ is $\Aut(\N)$ or $S_{\infty}$, the Grothendieck group
\[
K_0(RG \DMod) \cong \bigoplus_{n \in \N} K_0(RS_n \Mod).
\]
Furthermore, by the work of Zelevinsky described in \cite{Ze}, $\mathscr{H} = \bigoplus_{n \in \N} K_0(RS_n \Mod)$ has a special Hopf algebra structure, called a \textit{positive self-adjoint Hopf algebra}, whose multiplication and comultiplication are induced by the induction and restriction of representations of symmetric groups respectively. By this isomorphism, one can also impose a positive self-adjoint Hopf algebra structure on $K_0(RG \DMod)$. For details about positive self-adjoint Hopf algebras, please refer to \cite{Ze}. The K-theory of this category was also studied in \cite[Subsection 2.4]{SS}, where a ring structure on $K_0(RG \DMod)$ is described in \cite[Remark 2.4.6]{SS} without relying on the above isomorphism. We wonder whether the Hopf structure on this Grothendieck group can also be defined in a direct and natural way.
\end{remark}

\subsubsection{Representation stability} \label{stability result}

A main incentive for people to study representation theory of some combinatorial categories such as $\FI$ is the representation stability phenomena, first observed in \cite{CEF}, which has rich applications in investigating asymptotic behavior of (co)homology groups of topological spaces and geometric groups; see for instances\cite{CE, CEF, CEFN, Gad, JW, MW, MPW}. Now we describe some stability properties of discrete representations of $G$, which is $\Aut(\N)$ or $S_{\infty}$.

Let $V$ be a discrete $RG$-module. We can construct a presentation
\[
\bigoplus_{T \subseteq \mathbb{N}} R(G/\mathrm{Stab}(T))^{c_T} \longrightarrow \bigoplus_{T \subseteq \mathbb{N}} R(G/\mathrm{Stab}(T))^{d_T} \longrightarrow V \longrightarrow 0
\]
where $T$ is a finite subset of $\mathbb{N}$ and $c_T$ and $d_T$ are the multiplicities. For each such presentation one obtains a number (which may be infinity)
\[
\sup \{ |T| \mid T \text{ appears in the above presentation} \},
\]
and one defines the \textit{presentation degree} $N_V$ to be the smallest one. We say that $V$ is \textit{presented in finite degrees} if $N_V$ is finite.

\begin{proposition} \label{stability of Aut(N)}
Let $V$ be a discrete representation of $G$ over an arbitrary commutative ring $R$ and suppose that the presentation degree $N_V$ of $V$ is finite. Then one has
\[
V = \bigcup_{\substack{T \subseteq \N \\ |T| \leqslant N_V }} V^{U_T}.
\]
Furthermore, $N_V$ is the minimal number such that the above identity holds.
\end{proposition}

\begin{proof}
Let $\phi V$ be the corresponded sheaf where $\phi: \mathbf{B}G \to \Sh(\mathbf{S}G)$ is the equivalence of categories given by Artin's theorem. Then by Theorem \ref{mm} we have
\[
V = \bigcup_{\substack{S \subseteq \N \\ |S| < \infty }} (\phi V)_S.
\]
Note that $R(G/\mathrm{Stab}(T))$ is sent to the projective $(\mathbf{S}G)^{\op}$-module $R\mathbf{S}G(-, T)$ by $\phi$, and vice versa by $\psi$ (taking direct limits). Thus the presentation degree (defined in \cite{GL}) of the $(\mathbf{S}G)^{\op}$-module $\phi V$ is also $N_V$. Consequently, by \cite[Theorem C]{CEFN} or \cite[Theorem 3.2]{GL}, for $N \geqslant N_V$ and any finite set $S$, one has
\[
(\phi V)_S = \bigcup_{\substack{T \subseteq S \\ |T| \leqslant N}} (\phi V)_T.
\]
Moreover, $N_V$ is the minimal number such that the above equality holds by \cite[Theorem 3.2]{GL}. Combining these two equalities, we obtain the desired equality.
\end{proof}

\begin{remark}
The minimal number $N$ such that the above equality holds is called the \textit{stable range} of $\phi V$ in \cite{CEFN}. Thus the above proposition asserts that the stable range equals to the presentation degree.
\end{remark}

\subsection{Infinite general or special linear groups}

In this subsection we consider discrete representations of infinite general or special linear groups over finite fields. Representation theory of these group have been extensively studied in the literature; see for instances \cite{Nag1, Nag2, PS, SS2}. Thus we do not claim originality for the classification results, and correspondingly we omit their proofs. Instead, we use these groups as another example to illustrate our approach.

\subsubsection{Orbit categories}

Let $\mathbb{F}_q$ be a finite field, and let $\mathbb{F}_q^{\N}$ be the vector space with basis elements indexed by $\N$; or equivalently,
\[
\mathbb{F}_q^{\N} = \{ f: \N \to \mathbb{F}_q \mid f(n) = 0 \text{ for } n \gg 0 \}.
\]
Equip $\mathbb{F}_q^{\N}$ with the discrete topology, and let $\Aut(\mathbb{F}_q^{\N})$ be the group consisting of invertible maps from $\mathbb{F}_q^{\N}$ to itself. As explained in the previous subsection, we can equip $\Aut(\mathbb{F}_q^{\N})$ with the pointwise convergence topology. In particular, the following open subgroups form a fundamental system of open neighborhoods of the identity element:
\[
\Stab_{\Aut(\mathbb{F}_q^{\N})}(S) = \{f \in \Aut(\mathbb{F}_q^{\N}) \mid f(s) = s, \, \forall s \in S \}
\]
where $S$ ranges over all finite subsets of $\mathbb{F}_q^{\N}$.

Note that $\Aut(\mathbb{F}_q^{\N})$ has the following subgroup respecting the vector space structure:
\[
G = \mathrm{GL}(\mathbb{F}_q^{\N}) = \{f \in \Aut(\mathbb{F}_q^{\N}) \mid f \text{ is a $\mathbb{F}_q$-linear} \}.
\]
A fundamental system of open neighborhoods of the identity is given by open subgroups
\[
\Stab_G(S) = \{f \in G \mid f(s) = s, \, \forall s \in S \} = \Stab_G (\underline{S}),
\]
where $\underline{S}$ is the vector space spanned by a finite subset $S$ of $\mathbb{F}_q^{\N}$.

Let $\mathcal{U} = \{\Stab_G(S) \mid S \subset \mathbb{F}_q^{\N}, \, |S| < \infty \}$, which is a cofinal system of open subgroups of $G$.

\begin{proposition}\label{VI}
Notation as above. Then $\mathbf{S}_{\mathcal{U}}G$ is equivalent to the category $\VI^{\op}$.
\end{proposition}

\begin{proof}
Define a contravariant functor $\rho: \mathbf{S}_{\mathcal{U}} G \to \VI$ as follows. It sends an object $G/\Stab_G(S)$ to $\underline{S}$ in $\VI$. Clearly, $\rho$ is essentially surjective. Let
\[
\alpha \in \mathbf{S}_{\mathcal{U}}G (G/\Stab_G(S), G/\Stab_G(T))
\]
be a morphism, which can be represented by an element $g \in G$ with $g \Stab_G(S) g^{-1} \subseteq \Stab_G(T)$. This happens if and only if $g^{-1} (T) \subseteq S$; that is, the restriction of $g^{-1} |_{\underline{T}}$ is contained in $\VI(\underline{T}, \underline{S})$. We thus set $\rho(\alpha) = g^{-1} |_{\underline{T}}$.

We prove that this functor is well defined, faithful, and full. Suppose that
\[
\alpha \in \mathbf{S}_{\mathcal{U}}G (G/\Stab_G(S), G/\Stab_G(T))
\]
are represented by two elements $g, h \in G$. This happens if and only if $g^{-1} \Stab_G(T) = h^{-1} \Stab_G(T)$, and if and only if $hg^{-1} \in \Stab_G(T)$. But this is true if and only if $h^{-1}(t) = g^{-1}(t)$ for all $t \in T$; that is, $h^{-1} |_{\underline{T}} = g^{-1} |_{\underline{T}}$ as $g$ and $h$ are linear maps. Thus $\rho$ is well defined. This argument also shows that $\rho$ is faithful. Since every morphism in  $\VI(\underline{T}, \underline{S})$ can be extended to an element in $G$, the functor $\rho$ is also full. The conclusion then follows.
\end{proof}

The group $\mathrm{GL}(V)$ has a dense subgroup $\mathrm{GL}_{\infty} (\mathbb{F}_q) = \varinjlim_n \mathrm{GL}_n(\mathbb{F}_q)$, and $\mathrm{GL}_{\infty} (\mathbb{F}_q)$ has a subgroup $\mathrm{SL}_{\infty} (\mathbb{F}_q) = \varinjlim_n \mathrm{SL}_n(\mathbb{F}_q)$. By mimicking the construction in the proof of the previous proposition and noting the following facts: a linear injection $f: \underline{S} \to \underline{T}$ with $|S| < |T|$ can extend to an element $\tilde{f} \in \mathrm{SL}_n(\underline{T})$ via viewing $\underline{S}$ as a proper subspace of $\underline{T}$; and $\tilde{f}$ can be regarded as an element in $G$ in a natural way, one can show that $\mathrm{GL}_{\infty} (\mathbb{F}_q)$ and $\mathrm{SL}_{\infty} (\mathbb{F}_q)$ also have orbit categories equivalent to $\VI^{\op}$. Consequently, the categories of discrete representations of these three linear groups can be identified.

\subsubsection{Simple and indecomposable injective objects}

Let $G$ be one of the following groups: $\mathrm{GL} (\mathbb{F}_q^{\N})$, $\mathrm{GL}_{\infty} (\mathbb{F}_q)$, and $\mathrm{SL}_{\infty} (\mathbb{F}_q)$. Let $\C$ be the skeletal subcategory of $\VI$ with objects $\mathbb{F}_q^n$, $n \in \N$.

Let $F(n)$ be the $RG$-module spanned by (as an $R$-module) all $\mathbb{F}_q$-linear injections from $\mathbb{F}_q^n$ to $\mathbb{F}_q^{\N}$. It is easy to check that $F(n)$ is a discrete representation of $G$, and corresponds to the representable functor $R\C(\mathbb{F}_q^n, -)$. We say that a discrete $RG$-module is finitely generated if it is a quotient of $\oplus_{n \in \N} F(n)^{d_n}$ such that $\sum_{n \in \N} d_n$ is finite. The category of finitely generated discrete $RG$-module is abelian by the locally Noetherian property of $R\C$ (see for instance \cite{GL0}), denoted by $RG \dmodule$. We thus have by Proposition \ref{equivalence of Z}:
\[
RG \dmodule \simeq \mathrm{sh}(\BC^{\op}, R) \simeq \C \module / \C \module^{\tor} \simeq \C \fdmod.
\]

Irreducible objects in $\mathrm{sh}(\BC^{\op}, R)$ have been classified in \cite[Theorem 1.1]{Nag2} for the non-describing case. Here we give a slightly more explicit description of these objects for an algebraically closed field of characteristic 0, relying on the work of Gan and Watterlond in \cite{GW}. For this purpose, we introduce a few notations, most of which follow \cite{GW}.

Let $\mathcal{C}_n$ be the set of isomorphism classes of irreducible cuspidal representations of $\mathrm{GL}_n(\mathbb{F}_q)$, and let $\mathcal{C} = \sqcup_{n \in \N} \mathcal{C}_n$. Let $\mathcal{P}_n$ be the set of partitions of $[n]$, and let $\mathcal{P} = \sqcup_{n \in \N}\mathcal{P}_n$. For $\rho \in \mathcal{C}_n$, we set $d(\rho) = n$. For a partition $\lambda: \lambda_1 \geqslant \lambda_2 \geqslant \ldots$, we set $|\lambda| = \lambda_1 + \lambda_2 + \ldots$. For any function $\boldsymbol{\lambda}: \mathcal{C} \to \mathcal{P}$, we let
\[
\| \boldsymbol{\lambda} \| = \sum _{\rho \in \mathcal{C}} d(\rho) |\boldsymbol{\lambda}(\rho)|.
\]
By \cite[Section 9]{Ze}, elements in $\mathrm{Irr}(\mathrm{GL}_n(\mathbb{F}_q))$ are parameterized by functions $\boldsymbol{\lambda}: \mathcal{C} \to \mathcal{P}$ such that $\| \boldsymbol{\lambda} \| = n$.

Now let $\iota$ be the trivial representation of $\mathrm{GL}_1(\mathbb{F}_q)$, and for any $\boldsymbol{\lambda}: \mathcal{C} \to \mathcal{P}$, we let $\boldsymbol{\lambda}(\iota)$ be the partition $\lambda_1 \geqslant \lambda_2 \geqslant \ldots.$ For any integer $n \geqslant \| \boldsymbol{\lambda} \| + \lambda_1$, we define a new function
\[
\boldsymbol{\lambda}[n]: \mathcal{C} \to \mathcal{P}
\]
such that $\boldsymbol{\lambda}[n](\iota)$ is the partition $n -\lambda_1 \geqslant \lambda_1 \geqslant \lambda_2 \geqslant \ldots$ and $\boldsymbol{\lambda}[n](\rho) = \boldsymbol{\lambda}(\rho)$ for $\rho \neq \iota$. Note that $\boldsymbol{\lambda}[n]$ corresponds to an irreducible representation of $\mathrm{GL}_n(\mathbb{F}_q)$.

With these notations, we have:

\begin{proposition}
There is a bijection between the set of functions $\boldsymbol{\lambda}: \mathcal{C} \to \mathcal{P}$ such that $\| \boldsymbol{\lambda} \| < \infty$ and the set of isomorphism classes of simple objects in $\Sh(\BC^{\op}, R)$. Explicitly, given such a function $\boldsymbol{\lambda}$, the corresponding simple object $V(\boldsymbol{\lambda})$ in $\Sh(\BC^{\op}, R)$ has the following structure: for $n < \| \boldsymbol{\lambda} \| + \lambda_1$, one has $V(\boldsymbol{\lambda})_n = 0$; and for $n \geqslant \| \boldsymbol{\lambda} \| + \lambda_1$, $V(\boldsymbol{\lambda})_n$ is the irreducible representation of $\mathrm{GL}_n(\mathbb{F}_q)$ corresponding to $\boldsymbol{\lambda}[n]$.
\end{proposition}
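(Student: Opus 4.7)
The plan is to transport Zelevinsky's parameterization of irreducible representations of $\mathrm{GL}_n(\mathbb{F}_q)$ through the equivalence $\mathrm{sh}(\BC^{\op},R) \cong \C \fdmod$ provided by Proposition \ref{equivalence of Z}, and then to exhibit the stable part of each simple sheaf explicitly, paralleling the treatment of $\FI$ in Theorem \ref{simple FI-sheaves}. For the bijection, Corollary \ref{finitely generated sheaves}(5) identifies isomorphism classes of simple objects in $\C \module^{\sat}$ with the disjoint union over $n \in \N$ of isomorphism classes of simple $R\mathrm{GL}_n(\mathbb{F}_q)$-modules; since every simple object of the locally Noetherian Grothendieck category $\C \Mod^{\sat}$ is already finitely generated, these are also the simple objects of $\Sh(\BC^{\op},R)$. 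By Zelevinsky \cite[Section 9]{Ze}, the set of simple $R\mathrm{GL}_n(\mathbb{F}_q)$-modules is parameterized by functions $\boldsymbol{\lambda}:\mathcal{C} \to \mathcal{P}$ with $\|\boldsymbol{\lambda}\| = n$, so taking the disjoint union over $n$ gives the claimed bijection with $\{\boldsymbol{\lambda} : \|\boldsymbol{\lambda}\| < \infty\}$.

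For the explicit description, I would set $m = \|\boldsymbol{\lambda}\|$, let $L_{\boldsymbol{\lambda}}$ be the corresponding irreducible $R\mathrm{GL}_m(\mathbb{F}_q)$-module, and form the indecomposable projective $\C$-module
\[
P(\boldsymbol{\lambda}) = R\C(\mathbb{F}_q^m,-) \otimes_{R\mathrm{GL}_m(\mathbb{F}_q)} L_{\boldsymbol{\lambda}},
\]
which by condition (LS) is also injective and, since every morphism in $\C$ is a monomorphism, is torsion-free; hence $P(\boldsymbol{\lambda}) \in \C\module^{\sat}$. Define $V(\boldsymbol{\lambda})$ to be the simple socle of $P(\boldsymbol{\lambda})$ inside $\C\module^{\sat}$. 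For $n \geq m$, decompose
\[
P(\boldsymbol{\lambda})_n \cong R\C(\mathbb{F}_q^m,\mathbb{F}_q^n) \otimes_{R\mathrm{GL}_m(\mathbb{F}_q)} L_{\boldsymbol{\lambda}}
\]
into $R\mathrm{GL}_n(\mathbb{F}_q)$-isotypic components; this is a parabolic induction whose multiplicities are governed by multiplication in Zelevinsky's positive self-adjoint Hopf algebra $\mathscr{H}$. Using the $\VI$-stability analysis of Gan and Watterlond \cite{GW}, I would single out a distinguished ``row-lengthening'' summand indexed by $\boldsymbol{\lambda}[n]$ that first appears when $n \geq m + \lambda_1$ and occurs with multiplicity one.

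Following the template of Theorem \ref{simple FI-sheaves}, I would then argue in two steps: (i) the collection $W$ of these distinguished summands across all $n$ assembles into a $\C$-submodule of $P(\boldsymbol{\lambda})$; and (ii) the quotient $P(\boldsymbol{\lambda})/W$ is torsion-free. Together, (i) and (ii) imply that $W$ is a saturated submodule of $P(\boldsymbol{\lambda})$ whose value at each object is either zero or a simple $R\mathrm{GL}_n(\mathbb{F}_q)$-module, hence $W$ is simple in $\C\module^{\sat}$ (this is the $\VI$-analogue of the criterion used in the FI proof), and must therefore coincide with $V(\boldsymbol{\lambda})$. The principal obstacle lies in establishing (ii): unlike Pieri's formula for symmetric groups, parabolic induction in the Zelevinsky framework intertwines partitions across every cuspidal type, and one must verify that for every isotypic summand $L_{\boldsymbol{\mu}}$ of $P(\boldsymbol{\lambda})_n$ with $\boldsymbol{\mu} \neq \boldsymbol{\lambda}[n]$, the image of $L_{\boldsymbol{\mu}}$ under every transition morphism $\mathbb{F}_q^n \to \mathbb{F}_q^{n+1}$ produces a summand distinct from $\boldsymbol{\lambda}[n+1]$ at level $n+1$. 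This branching statement follows from the explicit decomposition results in \cite{GW} combined with Zelevinsky's Hopf-algebraic formalism \cite{Ze}, but tracking the combinatorics simultaneously across all cuspidal series is the essential technical burden.
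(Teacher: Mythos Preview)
Your proposal is correct and follows essentially the same approach as the paper: the paper's proof simply invokes Corollary \ref{finitely generated sheaves} together with Zelevinsky's parametrization for the bijection, and for the explicit structure says only that one should ``mimic the techniques in Subsection 6.1, replacing results on representation theory of symmetric groups by corresponding results on representation theory of general linear groups over a finite field.'' Your write-up is a faithful and considerably more detailed unpacking of exactly that strategy, correctly identifying the Gan--Watterlond branching analysis as the $\VI$-analogue of Pieri's rule and pinpointing the cross-cuspidal combinatorics as the main technical burden.
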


\begin{remark}
As a consequence of the above result, the Grothendieck group
\[
K_0(RG \DMod) \cong \mathscr{H} = \bigoplus_{n \in \N} K_0(R\mathrm{GL}_n(\mathbb{F}_q) \Mod).
\]
By \cite{Ze}, $\mathscr{H}$ has a natural positive self-adjoint Hopf algebra structure, whose multiplication and comultiplication are induced by the induction and restriction on representations of finite general linear groups respectively. By this isomorphism, one can also impose a positive self-adjoint Hopf algebra structure on $K_0(RG \DMod)$.
\end{remark}

\subsubsection{Representation stability}

Now we describe a stability property of discrete representations of the three linear groups. All arguments to establish Proposition \ref{stability of Aut(N)} are still valid for $G$ with slight modifications. Thus we have:

\begin{proposition}
Let $V$ be a discrete representation of $G$ over an arbitrary commutative ring $R$ and suppose that the presentation degree $N_V$ of $V$ is finite. Then one has
\[
V = \bigcup_{\substack{T \subset \mathbb{F}_q^{\N} \\ |T| \leqslant N_V }} V^{U_T}.
\]
Furthermore, $N_V$ is the minimal number such that the above identity hold.
\end{proposition}

\subsection{Automorphism groups of the linearly ordered set $\mathbb{Q}$}

Let $G = \Aut(\mathbb{Q}, \leqslant)$ be the group of all order-preserving permutations on the poset $(\mathbb{Q}, \leqslant)$. It can be topologized by letting
\[
\mathcal{U} = \{\Stab_G (S) \mid S \subset \mathbb{Q}, \, |S| < \infty \}
\]
be a fundamental system of open neighborhoods of the identity element. Then $\mathbf{S}_{\mathcal{U}}G$ is equivalent to $\OI^{\op}$ by \cite[Example D3.4.11]{Jo}.

Let $\C$ be the skeletal subcategory of $\OI$ with objects $[n]$, $n \in \N$. Given $n \in \N$, and let $F(n)$ be the free $R$-module spanned by the set of all order-preserving maps from $[n]$ to $\mathbb{Q}$. It is easy to see that $F(n)$ is a discrete $RG$-module, and furthermore, $F(n)$ corresponds to the free $\C$-module $P(n) = R\C([n], -)$. We say that a discrete $RG$-module $V$ is \textit{finitely generated} if there is a surjective homomorphism
\[
\bigoplus_{n \in \N} (F(n))^{c_n} \to V
\]
with $\sum_{n \in \N} c_n < \infty$. By the locally Noetherian property of $R\C$, the full subcategory consisting of finitely generated discrete $RG$-modules is an abelian subcategory of $RG \DMod$, denoted by $RG \dmodule$.

Simple objects in $\Sh(\BC^{\op}, R)$ have been classified in \cite[Theorem 1.2]{GL1}, so let us briefly summarize the result.

For $n \in \N$ and $i \in [n+1]$, denote by $\alpha_{n, i}$ the morphism from $[n]$ to $[n+1]$ such that $i \in [n+1]$ is not contained in the image of $\alpha_{n, i}$. For instance, $\alpha_{n, n+1}$ is the canonical inclusion $[n] \to [n+1]$. Now for $i \in [n]$, let $L_{n, i} = \langle \alpha_{n,i} - \alpha_{n, i+1} \rangle$ be the two-sided ideal of the category algebra $R\C$ generated by $\alpha_{n, i} - \alpha_{n, i+1}$, and define $L_n = \cap_{i \in [n]} L_{n,i}$. For $n = 0$, by convention we let $L_0 = R\C([0], -)$.

\begin{proposition} \label{simple OI-sheaves}
Let $R$ be a field. Then every simple (resp., indecomposable injective) object in $\Sh(\BC^{\op}, R)$ is isomorphic to $L_n$ (resp., $F(n))$ for a certain $n \in \N$.
\end{proposition}

\begin{proof}
The conclusion for simple objects follows from \cite[Theorem 4.9]{GL1}. The conclusion for indecomposable injective objects follows from Corollary \ref{finitely generated sheaves} as well as the classification of indecomposable injective $\C$-modules in \cite[Theorem 14.2]{GS} (see also \cite[Proposition 4.4]{GL1}).
\end{proof}

A representation stability result for $G$ is:

\begin{proposition}
Let $V$ be a discrete representation of $G$ over an arbitrary commutative ring $R$ and suppose that the presentation degree $N_V$ of $V$ is finite. Then one has
\[
V = \bigcup_{\substack{T \subset \mathbb{Q} \\ |T| \leqslant N_V }} V^{U_T}.
\]
Furthermore, $N_V$ is the minimal number such that the above identity hold.
\end{proposition}

\subsection{The automorphism group $\Aut(B_{\infty})$}

Let $G = \Aut(B_{\infty})$ be the automorphism group of the free Boolean algebra $B_{\infty}$ on a countable infinity of generators. We may impose a topology on $G$ by letting the pointwise stabilizers of finite subalgebras form a base of neighborhoods of the identity. This group is isomorphic to a group of self-homeomorphisms of the Cantor space equipped with a special topology. Furthermore, $\mathbf{S}_{\mathcal{U}}G$ is equivalent to $\mathrm{FS}$, the category of finite sets and surjections. For details, see \cite[Example D3.4.12]{Jo}.

Representation theory of $\FS^{\op}$ have been considered in \cite{Pr, PY, SS3}. However, compared to $\FI$, there are still many questions unsolved. For instance, we know that it is a Type II combinatorial category, and satisfies the condition (LN) specified in Subsection \ref{type ii cats} by \cite[Theorem 8.1.2]{SS3}. However, it is not clear to the authors whether the other two conditions (LS) and (NV) hold. Therefore, an classification of simple discrete representation of $G$, even if $R$ is a field of characteristic 0, is still unavailable yet.

Fortunately, one can still apply \cite[Theorem 3.12]{GL0} to deduce the following stability result.

\begin{proposition}
Let $V$ be a discrete representation of $G$ over an arbitrary commutative ring $R$ and suppose that the presentation degree $N_V$ of $V$ is finite. Then one has
\[
V = \bigcup_{\substack{T \subset B_{\infty} \\ |T| \leqslant N_V }} V^{U_T},
\]
where $T$ is a finite subalgebra of $B_{\infty}$. Furthermore, $N_V$ is the minimal number such that the above identity hold.
\end{proposition}

\subsection{Other examples}

Let $p$ be a fixed prime number, and let $R$ be a field whose characteristic is distinct from $p$. Recall that $\mathcal{Z}(p^n)$ is the category of finitely generated $\mathbb{Z}/p^n \mathbb{Z}$-modules and surjective module homomorphisms, and $\mathcal{Z}(p^{\infty})$ is the category of finite abelian $p$-groups and conjugacy classes of surjective group homomorphisms; see \cite{Pol}. Furthermore, by Proposition \ref{equivalence of Z}, if $\C$ is one of these two categories, one has
\[
\mathrm{sh}(\BC, R) \simeq \C^{\op} \fdmod.
\]
Note that simple objects in $\C^{\op} \fdmod$ are parameterized by the set of irreducible representations (up to isomorphism) of $\Aut(H)$ when $H$ ranges over all finite abelian groups of exponent dividing $p^n$ (for $\mathcal{Z}(p^n))$ or finite abelian $p$-groups (for $\mathcal{Z}(p^{\infty})$). By the above equivalence, this set also parameterizes all simple objects in $\Sh(\BC, R)$.

Another example is the category $\FI^m$, the product category of $m$ copies of $\FI$, whose representations are considered in \cite{Gad, LY2, Zen}. Let $R$ be a field of characteristic 0, $\C$ be the skeletal subcategory of $\FI^m$ with objects $[n_1] \times \ldots \times [n_m]$, $n_i \in \N$, and let $G$ be the product group of $m$ copies of $\Aut(\N)$. By Proposition \ref{equivalence of Z}, one has $\mathrm{sh}(\BC^{\op}, R) \cong RG \dmodule$. Consequently, isomorphism classes of simple objects in $\mathrm{sh}(\BC^{\op}, R)$ or in $RG \dmodule$ are parameterized by the set $\mathcal{P}^m$, the Cartesian product of $m$ copies of $\mathcal{P}$. Furthermore, to obtain all simple objects in $RG \dmodule$, one only needs to exhaust the products of $m$ simple objects in $R\Aut(\N) \dmodule$. Indecomposable injective objects in $RG \dmodule$ can be constructed similarly.

\end{document}